\numberwithin{equation}{section}
\newtheorem{theorem}{Theorem}[section]
\newtheorem{proposition}[theorem]{Proposition}
\newtheorem{lemma}[theorem]{Lemma}
\newtheorem{conjecture}{Conjecture}
\theoremstyle{remark}
\newtheorem{remark}[theorem]{Remark}
\newtheorem{example}[theorem]{Example}
\theoremstyle{definition}
\newtheorem{definition}[theorem]{Definition}
\newcommand{\mbP}{\mathbb P}
\newcommand{\mbZ}{\mathbb Z}
\newcommand{\mbC}{\mathbb C}
\newcommand{\cP}{\mathcal P}
\newcommand{\oM}{\overline{\mathcal M}}
\newcommand{\tu}{{\widetilde u}}
\newcommand{\og}{\overline g}
\def\cM{{\mathcal{M}}}
\def\oM{{\overline{\mathcal{M}}}}
\def\mbQ{{\mathbb Q}}
\def\d{{\partial}}
\newcommand{\eps}{\varepsilon}
\newcommand{\str}{\mathrm{str}}
\newcommand{\cA}{\mathcal A}
\newcommand{\hcA}{\widehat{\mathcal A}}
\newcommand{\DR}{\mathrm{DR}}
\newcommand{\even}{\mathrm{even}}
\newcommand{\ct}{\mathrm{ct}}
\renewcommand{\th}{\widetilde h}
\newcommand{\Coef}{\mathrm{Coef}}
\newcommand{\Desc}{\mathrm{Desc}}
\newcommand{\SRT}{\mathrm{SRT}}
\newcommand{\RT}{\mathrm{RT}}
\newcommand{\ST}{\mathrm{ST}}
\newcommand{\tv}{\widetilde v}
\renewcommand{\top}{\mathrm{top}}
\newcommand{\red}{\mathrm{red}}
\newcommand{\ZZ}{\mathbb{Z}}
\newcommand{\QQ}{\mathbb{Q}}
\DeclareMathOperator{\Aut}{Aut}
\newcommand{\tOmega}{\widetilde\Omega}
\newcommand{\gl}{\mathrm{gl}}
\newcommand{\un}{{1\!\! 1}}
\newcommand{\mcF}{\mathcal{F}}
\newcommand{\oQ}{{\overline{Q}}}
\newcommand{\oP}{{\overline{P}}}
\newcommand{\pt}{\mathrm{pt}}
\newcommand{\ev}{\mathrm{ev}}
\newcommand{\ee}{\mathrm{e}}
\newcommand{\tH}{\widetilde{H}}
\newcommand{\od}{\overline{d}}
\newcommand{\obeta}{\overline{\beta}}
\newcommand{\ob}{\overline{b}}
\newcommand{\perm}{\mathrm{perm}}
\def\br#1{\llbracket #1 \rrbracket}
\newcommand{\ou}{\overline{u}}
\newcommand{\tB}{\widetilde{B}}
\newcommand{\Core}{\mathrm{Core}}
\newcommand{\tT}{\widetilde{T}}
\newcommand{\tSRT}{\widetilde{\SRT}}
\newcommand{\vir}{{\mathrm{vir}}}
\newcommand{\oS}{\overline{S}}
\newcommand{\oF}{\overline{\mathcal{F}}}
\newcommand{\unst}{\mathrm{unst}}
\newcommand{\st}{\mathrm{st}}
\newcommand{\vdim}{\mathrm{vdim}}
\newcommand{\tpsi}{\widetilde{\psi}}
\newcommand{\mcO}{\mathcal{O}}
\newcommand{\tGamma}{\widetilde{\Gamma}}
\newcommand{\mbE}{\mathbb{E}}
\newcommand{\tw}{\widetilde{w}}
\newcommand{\tc}{\widetilde{c}}
\newcommand{\wk}{{\mathrm{wk}}}
\newcommand{\veq}{\mathrel{\rotatebox{90}{$=$}}}
\newcommand{\lvl}{\mathrm{lvl}}
\newcommand{\DRT}{\mathrm{DRT}}
\newcommand{\mcL}{\mathcal{L}}
\newcommand{\ind}{\mathrm{ind}}
\newcommand{\oc}{\overline{c}}
\newcommand{\ozero}{\overline{0}}
\newcommand{\ow}{\overline{w}}
\newcommand{\tgamma}{\widetilde{\gamma}}
\newcommand{\tf}{\widetilde{f}}
\newcommand{\te}{\widetilde{e}}
\newcommand{\td}{\widetilde{d}}
\newcommand{\tI}{\widetilde{I}}
\newcommand{\cS}{\mathcal{S}}
\newcommand{\lra}{\longrightarrow}
\setlist[enumerate,1]{label={\rm(\arabic*)}, ref={\rm\arabic*}}
\newcommand{\supth}[1]{\ensuremath{#1^{\mathrm{th}}}}
\title{Tautological relations and integrable systems}
\author{Alexandr Buryak}
\address{Faculty of Mathematics, National Research University Higher School of Economics, 6 Usacheva str., Moscow, 119048, Russian Federation\\
Igor Krichever Center for Advanced Studies, Skolkovo Institute of Science and Technology, Bolshoy Boulevard 30, bld. 1, Moscow, 121205, Russian Federation\\
P.~G.~Demidov Yaroslavl State University, 14 Sovetskaya str., Yaroslavl, 150003, Russian Federation}
\email{aburyak@hse.ru}
\author{Sergey Shadrin}
\address{Korteweg--de Vries Instituut voor Wiskunde, Universiteit van Amsterdam, Postbus 94248, 1090 GE Amsterdam, Netherlands}
\email{s.shadrin@uva.nl}
\begin{document}


\removeabove{0.32cm}
\removebetween{0.32cm}
\removebelow{0.32cm}

\maketitle

\begin{prelims}

\DisplayAbstractInEnglish

\bigskip

\DisplayKeyWords

\medskip

\DisplayMSCclass

\end{prelims}


\newpage

\setcounter{tocdepth}{1}

\tableofcontents


\section{Introduction}

A remarkable relation between the geometry of the moduli spaces $\oM_{g,n}$ of stable algebraic curves of genus $g$ with $n$ marked points and integrable systems has been an object of intensive research during more than 30 years. This relation was first manifested by Witten's conjecture, see~\cite{Wit91}, proved by Kontsevich, see~\cite{Kon92}, saying that the generating series of the integrals
\begin{gather}\label{eq:psi-integral}
\int_{\oM_{g,n}}\prod_{i=1}^n\psi_i^{d_i},\quad g,n\ge 0,\,\, d_i\ge 0,
\end{gather}
where~$\psi_i\in H^2(\oM_{g,n},\mbC)$, $1\le i\le n$, is the first Chern class of the $\supth{i}$ cotangent line bundle over $\oM_{g,n}$, gives a solution of the Korteweg--de Vries (KdV) hierarchy. It was then realized by Dubrovin and Zhang, see~\cite{DZ01}, that integrable systems appear in a much more general context where the central role is played by the notion of a \emph{cohomological field theory} (CohFT), introduced by Kontsevich and Manin; see~\cite{KM94}. A cohomological field theory is a family of cohomology classes on the moduli spaces $\oM_{g,n}$, depending also on a vector in the $\supth{n}$ tensor power of a fixed vector space~$V$, that satisfy certain compatibility properties with respect to natural maps between different moduli spaces. Given a CohFT, Dubrovin and Zhang constructed $N:=\dim V$ generating series~$w^{\top;\alpha}$, $1\le\alpha\le N$, by inserting cohomology classes forming the CohFT in the integrals~\eqref{eq:psi-integral} and proved that the resulting $N$-tuple $\ow^{\,\top}:=(w^{\top;1},\ldots,w^{\top;N})$ is a solution of an integrable hierarchy (of evolutionary PDEs with one spatial variable) canonically associated to our CohFT. This integrable hierarchy is called the \emph{Dubrovin--Zhang} (\emph{DZ}) \emph{hierarchy}, and the solution $\ow^{\,\top}$ is called the \emph{topological solution}. However, certain fundamental properties of this hierarchy, including the polynomiality of the equations, were left as an open problem. The polynomiality of the equations of the hierarchy was proved in~\cite{BPS12} for semisimple CohFTs (together with the polynomiality of the Hamiltonian structure, which we do not discuss in our paper). However, a satisfactory formula for the equations of the hierarchy was not found.   

In~\cite{Bur15}, the first author suggested a new construction of an integrable system associated to an arbitrary CohFT. The construction uses certain cohomology classes on $\oM_{g,n}$ called the \emph{double ramification} (\emph{DR}) \emph{cycles}, and so the new hierarchy was called the \emph{DR hierarchy}. In contrast to the DZ hierarchy, the equations of the DR hierarchy are polynomial by construction, with a relatively simple formula for the coefficients given in terms of the intersection numbers of cohomology classes forming the CohFT and basic cohomology classes on~$\oM_{g,n}$ including the DR cycles and the classes $\psi_i$. However, it is not known how to single out a solution of the DR hierarchy that has such a simple geometric interpretation as the topological solution of the DZ hierarchy. There is a choice of a solution that is natural for other reasons; see~\cite{Bur15,BDGR18}.

In~\cite{Bur15}, the first author conjectured that for an arbitrary CohFT, the DR and the DZ hierarchies are Miura equivalent, and in~\cite{BDGR18}, this conjecture was made more precise, giving a precise description of the required Miura transformation in terms of the partition function of the CohFT. In~\cite{BGR19}, the authors presented a family of relations in the cohomology of $\oM_{g,n}$ implying the Miura equivalence of the DR and the DZ hierarchies. 

In \cite{BR21,ABLR21}, the authors extended the construction of the DR hierarchy to objects that are much more general than CohFTs, the so-called \emph{F-CohFTs}, introduced in~\cite{BR21}. Regarding the DZ hierarchies, their generalization for F-CohFTs was not considered in the literature before. In this paper, following the approach from~\cite{BPS12}, we show that there is a natural generalization of the DZ hierarchies for F-CohFTs. 

In this paper, for any $m\ge 0$, we present a family of conjectural relations in the cohomology of $\oM_{g,n+m}$ parameterized by integers $d_1,\ldots,d_n\ge 0$ satisfying $\sum d_i\ge 2g+m-1$. For fixed $g,n,m,d_1,\ldots,d_n$, our conjectural relation lies in~$H^{2\sum d_i}(\oM_{g,n+m},\mbC)$. We explain that these conjectural relations naturally imply the following fundamental properties of the DR and the DZ hierarchies: 
\begin{itemize}
\item For $m\ge 2$, the relations imply the polynomiality of the DZ hierarchy associated to an arbitrary F-CohFT (Theorem~\ref{theorem:implication of conjecture1}).

\item For $m=1$, the relations imply that the DR and the DZ hierarchies associated to an arbitrary F-CohFT are related by a Miura transformation (Theorem~\ref{theorem:implication of conjecture2}). For $n=1$, the relation already appeared in~\cite{BHIS21} and proved to be true in the Gorenstein quotient of~$\oM_{g,2}$ in~\cite{Gub22}.

\item For $m=0$, the relations already appeared in~\cite{BGR19}. According to~\cite{DZ01,BDGR18}, the DR and the DZ hierarchies associated to an arbitrary CohFT are endowed with an additional structure, called a \emph{tau-structure}. There are Miura transformations that preserve a tau-structure; they are called \emph{normal Miura transformations}. By a result from~\cite{BGR19}, for $m=0$ the relations imply that the DR and the DZ hierarchies associated to an arbitrary CohFT are related by a normal Miura transformation (see Section~\ref{subsubsection:implication of conjecture3}).
\end{itemize}
We can thus view our family of conjectural relations as a natural extension of the family of relations presented in~\cite{BGR19}.

\begin{example}
Let us present our relations in the case $n=1$, leaving the general case to Section~\ref{section:conjectural relations}. For any $k\ge 1$ and $m,g,d\ge 0$, let us introduce the following set:
$$
\cS^{m,k}_{g,d}\coloneqq\left\{(\og,\od)\in\mbZ_{\ge 0}^k\times\mbZ_{\ge 0}^k\;\left|\;
\begin{minipage}{9cm}
\small
$d_1+\cdots+d_k+k-1=d$,\\ $g_1\ge\delta_{m\le 1}$, $g_2,\dots,g_{k}\geq 1$, $g_1+\cdots+g_k=g$,\\ $d_1+\cdots+d_i+i-1\leq 2(g_1+\cdots+g_{i})-2+m$ for any $i=1,\ldots,k-1$
\end{minipage}\right.\right\},
$$
where $\og=(g_1,\ldots,g_k)$ and $\od=(d_1,\ldots,d_k)$.
\begin{itemize}
\item Suppose $m\ge 2$. Our relations form a family of cohomological relations in $H^{2d}(\oM_{g,m+1},\mbC)$, for any $d\ge 2g-1+m$, and are given by
\begin{gather*}
\sum_{k\ge 1}(-1)^{k-1}\sum_{(\og,\od)\in\cS^{m,k}_{g,d}} 
\vcenter{\xymatrix@C=25pt@R=5pt{
{}\ar@{.}@/_/[dd] & & & & & & \\
& *+[o][F-]{{g_1}}\ar@{-}[lu]*{{}_{2\,}}\ar@{-}[ld]*{{}_{m+1\hspace{0.2cm}}}\ar@{-}[l]\ar@{-}[r]^<<<{\psi^{d_1}} & *+[o][F-]{{g_2}}\ar@{-}[r]^<<<{\psi^{d_2}} & \cdots & *+[o][F-]{{g_{k-1}}}\ar@{-}[l]\ar@{-}[r]^<<<<{\psi^{d_{k-1}}} & *+[o][F-]{{g_k}}\ar@{-}[r]*{{}_{\,1}}^<<<{\psi^{d_k}} &\\
& & & & & & }}=0,
\end{gather*}
where we use the standard way to represent tautological cohomology classes using decorated stable graphs (see the details in Section~\ref{section:conjectural relations}).

\item Suppose $m=1$. Our relations form a family of cohomological relations in $H^{2d}(\oM_{g,2},\mbC)$, for any $d\ge 2g$, and are given by
\begin{align*}
&\sum_{k\ge 1}(-1)^{k-1}\sum_{(\og,\od)\in\cS^{1,k}_{g,d}} 
\vcenter{\xymatrix@C=25pt@R=5pt{
& *+[o][F-]{{g_1}}\ar@{-}[l]*{{}_{2\,}}\ar@{-}[r]^<<<{\psi^{d_1}} & *+[o][F-]{{g_2}}\ar@{-}[r]^<<<{\psi^{d_2}} & \cdots & *+[o][F-]{{g_{k-1}}}\ar@{-}[l]\ar@{-}[r]^<<<<{\psi^{d_{k-1}}} & *+[o][F-]{{g_k}}\ar@{-}[r]*{{}_{\,1}}^<<<{\psi^{d_k}} &}}\\
&\;=\sum_{\substack{g_1,\ldots,g_l\ge 1\\\sum g_i=g}}\left(\prod_{i=1}^{l-1}\frac{g_i}{g_i+g_{i+1}+\cdots+g_l}\right) 
\xymatrix@C=25pt@R=5pt{
& *+[o][F-,]{{g_1}}\ar@{-}[l]*{{}_{2\,}}\ar@{-}[r] & *+[o][F-,]{{g_2}}\ar@{-}[r] & \cdots & *+[o][F-,]{{g_{l}}}\ar@{-}[l]\ar@{-}[r]*{{}_{\,1}} &}, 
\end{align*} 
where $l=d-2g+1$, $\xymatrix@C=17pt@R=0pt{
& *+[o][F-,]{{g}}\ar@{-}[l]\ar@{-}[r] & 
}
\coloneqq \lambda_g\DR_g(1,-1)$, $\DR_g(1,-1)$ denotes the double ramification cycle, and $\lambda_g$ is the top Chern class of the Hodge bundle over $\oM_{g,n}$.

\item Suppose $m=0$. Our relations form a family of cohomological relations in $H^{2d}(\oM_{g,1},\mbC)$, for any $d\ge 2g-1$, and are given by
\begin{align*}
&\sum_{k\ge 1}(-1)^{k-1}\sum_{(\og,\od)\in\cS^{0,k}_{g,d}} 
\xymatrix@C=25pt@R=5pt{
*+[o][F-]{{g_1}}\ar@{-}[r]^<<<{\psi^{d_1}} & *+[o][F-]{{g_2}}\ar@{-}[r]^<<<{\psi^{d_2}} & \cdots & *+[o][F-]{{g_{k-1}}}\ar@{-}[l]\ar@{-}[r]^<<<<{\psi^{d_{k-1}}} & *+[o][F-]{{g_k}}\ar@{-}[r]*{{}_{\,1}}^<<<{\psi^{d_k}} &}\\
&\;=\sum_{\substack{g_1,\ldots,g_l\ge 1\\\sum g_i=g}}\left(\prod_{i=1}^{l-1}\frac{g_i}{g_i+g_{i+1}+\cdots+g_l}\right)\pi_*\left(
\xymatrix@C=25pt@R=5pt{
& *+[o][F-,]{{g_1}}\ar@{-}[l]*{{}_{2\,}}\ar@{-}[r] & *+[o][F-,]{{g_2}}\ar@{-}[r] & \cdots & *+[o][F-,]{{g_{l}}}\ar@{-}[l]\ar@{-}[r]*{{}_{\,1}} &}\right), 
\end{align*} 
where $l=d-2g+2$ and the map $\pi\colon\oM_{g,2}\to\oM_{g,1}$ forgets the second marked point.
\end{itemize}
\end{example}

We then prove all our relations in the case $n=1$ and arbitrary~$g$ (Theorem~\ref{theorem:main}) using the method for constructing relations in $H^*(\oM_{g,n},\mbC)$ from the paper~\cite{LP11}. In particular, this proves the main conjecture from~\cite{BHIS21} and the conjectural relations from~\cite{BGR19} in the case $n=1$. We also prove all our relations in the case $g=0$ and arbitrary $n$ (Theorem~\ref{theorem:main2}).

Finally, we fill a gap in the understanding of the equations of the DZ hierarchy mentioned above. An equation of the DZ hierarchy associated to an F-CohFT is the sum of the polynomial part and the fractional part (which conjecturally vanishes). We present a geometric formula for the polynomial part. The formula expresses the coefficient of a monomial in the polynomial part as the intersection of some universal cohomology class on $\oM_{g,n}$ with an element of the F-CohFT. Since the polynomiality of the equations of the DZ hierarchy is proved for semisimple CohFTs, this gives a geometric formula for the equations of the DZ hierarchy in this case.

\subsection*{Organization of the paper} Our conjectural relations are presented in Section~\ref{section:conjectural relations}. For the case $m\ge 2$, this is formulated in Conjecture~\ref{conjecture1}, for the case $m=1$ in Conjecture~\ref{conjecture2}, and for the case $m=0$ in Conjecture~\ref{conjecture3}. As we already explained, Conjecture~\ref{conjecture3} was first proposed in~\cite{BGR19}. Then in Section~\ref{section:equivalent formulation}, we present an alternative formula for classes $B^m_{g,(d_1,\ldots,d_n)}$ appearing in the conjectures (Theorem~\ref{theorem:arbitrary m reformulation}) and a particularly elegant reformulation of Conjecture~\ref{conjecture1} (Theorem~\ref{theorem:equivalent relations for m2}). In Section~\ref{section:conjectural relations and fundamental properties}, we explain the role of our conjectures in the study of integrable systems associated to cohomological field theories and F-cohomological field theories (see Theorems~\ref{theorem:implication of conjecture1} and~\ref{theorem:implication of conjecture2} and Section~\ref{subsubsection:implication of conjecture3}). This section is independent of the other sections; a reader who is interested only in the geometrical part of our results can skip it. In Section~\ref{section:proof of main theorem}, we prove the conjectures in the case $n=1$, arbitrary $g$. In Section~\ref{section:genus 0}, we prove the conjectures in the case $g=0$, arbitrary $n$. In Section~\ref{section:reduction}, we prove that the whole system of conjectural relations for $m\ge 2$ (\textit{i.e.}, Conjecture~\ref{conjecture1}) follows from its subsystem with $\sum d_i=2g+m-1$ and $d_i\ge 1$ (Theorem~\ref{theorem:reductionSmall}). Finally, in the appendix, we review a localization formula for the moduli space of stable relative maps to $(\mbP^1,\infty)$, which is the main tool for our proof of the conjectures in the case $n=1$.

\subsection*{Notation and conventions}  

\begin{itemize}
\item We denote by $H^i(X)$ the cohomology groups of a topological space $X$ with coefficients in $\mbC$. Let $H^\even(X):=\bigoplus_{i\ge 0}H^{2i}(X)$.

\item Let $\llbracket n \rrbracket \coloneqq \{1,\dots,n\}$. Given a map $\llbracket n \rrbracket \to \mathbb{Z}_{\geq 0}$, $i\mapsto d_i$, and a subset $I\subseteq \llbracket n \rrbracket$, let~$d_I$ denote $\sum_{i\in I} d_i$ (in particular, $d_\emptyset = 0$ and $d_{\{i\}}=d_i$).

\item Let $(a)_n$ denote the Pochhammer symbol $(a)_n\coloneqq \Gamma(a+1)/\Gamma(a+1-n)$. In particular, $(a)_0=1$ and $(a)_n = a(a-1)\cdots (a-n+1)$ for $n\geq 1$.

\item We use the standard convention for sums over repeated Greek indices.

\item We will work with the moduli spaces $\oM_{g,n}$ of stable algebraic curves of genus $g$ with $n$ marked points, which are defined for $g,n\ge 0$ satisfying the condition $2g-2+n>0$. We will often omit mentioning this condition explicitly and silently assume that it is satisfied when a moduli space is considered. 
\end{itemize}


\section{Conjectural cohomological relations}\label{section:conjectural relations}

\subsection{Tautological cohomology of $\boldsymbol{\oM_{g,n}}$}

Let us recall briefly the standard notation concerning tautological cohomology classes on $\oM_{g,n}$, referring a reader to~\cite[Sections 0.2 and 0.3]{PPZ15} for more details.

We use the standard cohomology classes on $\oM_{g,n}$:
\begin{itemize}
\item The $\supth{i}$ \emph{psi class} $\psi_i\in H^2(\oM_{g,n})$, $1\le i\le n$, is the first Chern class of the cotangent line bundle $\mathbb{L}_i\to\oM_{g,n}$ whose fibers are the cotangent spaces at the $\supth{i}$ marked point on stable curves.

\item The $\supth{i}$ \emph{kappa class} $\kappa_i \in H^{2i}(\oM_{g,n})$, $i\ge 0$, is defined as $\kappa_i:=\pi_*(\psi_{n+1}^{i+1})$, where $\pi\colon\oM_{g,n+1}\to\oM_{g,n}$ is the map forgetting the last marked point.

\item The $\supth{i}$ \emph{Hodge class} $\lambda_i\in H^{2i}(\oM_{g,n})$, $i\ge 0$, is the $\supth{i}$ Chern class of the Hodge vector bundle $\mathbb{E}_g\to\oM_{g,n}$ whose fibers are the spaces of holomorphic differentials on stable curves. 
\end{itemize}

We denote by $G_{g,n}$ the set of stable graphs of genus $g$ with $n$ legs marked by numbers $1,\ldots,n$. For a stable graph $\Gamma$, we use the following notation:
\begin{itemize}
\item $V(\Gamma)$, $E(\Gamma)$, $H(\Gamma)$, and $L(\Gamma)$ are the sets of vertices, edges, half-edges, and legs of $\Gamma$, respectively. 

\item The leg of $\Gamma$ marked by $1\le i\le |L(\Gamma)|$ is denoted by~$\sigma_i$. 

\item For $h\in H(\Gamma)$, let $v(h)$ be the vertex incident to $h$. 

\item Set~$H^e(\Gamma):=H(\Gamma)\backslash L(\Gamma)$, and for $h\in H^e(\Gamma)$, we denote by $\iota(h)$ a unique half-edge that together with $h$ forms an edge of $\Gamma$. 

\item For $v\in V(\Gamma)$, let
\begin{itemize}
\item $g(v)$ be the genus of $v$,
\item $n(v)$ be the degree of $v$,
\item $H[v]$ be the set of half-edges incident to $v$,
\item $r(v):=2g(v)-2+n(v)$. 
\end{itemize}  
\end{itemize} 

Consider a stable graph $\Gamma$.
\begin{itemize}
\item We associate to $\Gamma$ the space $\oM_{\Gamma}:=\prod_{v\in V(\Gamma)}\oM_{g(v),n(v)}$.

\item There is a canonical morphism $\xi_\Gamma\colon\oM_{\Gamma}\to\oM_{g(\Gamma),|L(\Gamma)|}$. Here $g(\Gamma)\coloneqq b_1(\Gamma)+\sum_{v\in V(\Gamma)}g(v)$, where $b_1(\Gamma)$ is the first Betti number of $\Gamma$.

\item A \emph{decoration} on $\Gamma$ is a choice of numbers $x_i[v],y[h]\ge 0$, $i\ge 1$, $v\in V(\Gamma)$, $h\in H(\Gamma)$. Given such numbers, we say that we have a \emph{decorated stable graph}. 

\item To a decorated stable graph, we associate the cohomology classes
$$
\gamma:=\prod_{v\in V(\Gamma)}\prod_{i\ge 1}\kappa_i[v]^{x_i[v]}\cdot\prod_{h\in H(\Gamma)}\psi_h^{y[h]}\in H^*(\oM_\Gamma)\quad\text{and}\quad \xi_{\Gamma*}(\gamma)\in H^*(\oM_{g(\Gamma),|L(\Gamma)|}),
$$
where $\kappa_i[v]$ is the $\supth{i}$ kappa class on $\oM_{g(v),n(v)}$ and $\psi_h$ is a psi class on $\oM_{g(v(h)),n(v(h))}$. The class $\xi_{\Gamma*}(\gamma)$ is called a \emph{basic tautological class} on~$\oM_{g(\Gamma),|L(\Gamma)|}$. We will often denote it by a picture of the decorated stable graph.
\end{itemize}

Denote by $R^*(\oM_{g,n})$ the subspace of $H^*(\oM_{g,n})$ spanned by all basic tautological classes. The subspace $R^*(\oM_{g,n})\subset H^*(\oM_{g,n})$ is closed under multiplication and is called the \emph{tautological ring} of $\oM_{g,n}$. Let $R^i(\oM_{g,n}):=R^*(\oM_{g,n})\cap H^{2i}(\oM_{g,n})$. Linear relations between basic tautological classes are called \emph{tautological relations}.

The Hodge classes on $\oM_{g,n}$ are tautological; $\lambda_i\in R^i(\oM_{g,n})$.

Recall the string equation, see~\cite{Wit91}, in the following form. Let $\pi\colon\oM_{g,k+q+1}\to \oM_{g,k}$ be the projection forgetting the last $q+1$ marked points. Then
\begin{gather}\label{eq:Str1}
\pi_*\left(\prod_{i=1}^k \psi_i^{q_i}\right) = \sum_{\substack{0 \le p_i\le q_i,\,i\in\llbracket k\rrbracket \\ q_{\llbracket k \rrbracket} - p_{\llbracket k \rrbracket} = q+1}}  \frac{(q+1)!}{\prod_{i=1}^k (q_i-p_i)!} \prod_{i=1}^k \psi_i^{p_i}. 
\end{gather}

For $A=(a_1,\dots,a_n)\in\mbZ^n$, $\sum_{i=1}^n a_i =0$, let $\DR_g(A) \in H^{2g}(\oM_{g,n})$ be the {\it double ramification} (\emph{DR}\,) \emph{cycle}. Let us briefly recall  the definition. The positive $a_i$ define a partition $\mu=(\mu_1,\ldots,\mu_{l(\mu)})$, and the negative $a_i$ define a second partition $\nu=(\nu_1,\ldots,\nu_{l(\nu)})$ of the same size. Let $n_0:=n-l(\mu)-l(\nu)$, and consider the moduli space $\oM_{g,n_0}(\mbP^1,\mu,\nu)^\sim$ of stable relative maps to rubber $\mbP^1$ with ramification profiles $\mu,\nu$ over the points $0,\infty\in\mbP^1$, respectively. Then the double ramification cycle $\DR_g(A)$ is defined as the Poincar\'e dual to the pushforward of the virtual fundamental class $\left[\oM_{g,n_0}(\mbP^1,\mu,\nu)^\sim\right]^\vir$ to $\oM_{g,n}$ via the forgetful map $\oM_{g,n_0}(\mbP^1,\mu,\nu)^\sim\to\oM_{g,n}$. 

Abusing notation, for $A=(a_1,\ldots,a_n)\in\mbZ^n$ and $B=(b_1,\ldots,b_m)\in\mbZ^m$ satisfying $\sum a_i+\sum b_j=0$, we denote by $\DR_g(A,B)$ the double ramification cycle $\DR_g(a_1,\ldots,a_n,b_1,\ldots,b_m)$. 

We have $\DR_g(A)\in R^g(\oM_{g,n})$ (see, \textit{e.g.},~\cite{JPPZ17}). The restriction $\left.\DR_g(A)\right|_{\cM^{\ct}_{g,n}}\in H^{2g}(\cM^{\ct}_{g,n})$ depends polynomially on the $a_i$, where by $\cM^\ct_{g,n}\subset\oM_{g,n}$ we denote the moduli space of curves of compact type. This implies that the class $\lambda_g\DR_g(A)\in R^{2g}(\oM_{g,n})$ depends polynomially on the $a_i$. Moreover, the resulting polynomial (with the coefficients in $R^{2g}(\oM_{g,n})$) is homogeneous of degree $2g$. There is also the following property that we will need. If $g\ge 1$ and $\pi\colon\oM_{g,n+1}\to\oM_{g,n}$ is the map forgetting the last marked point, then, see~\cite[Lemma~5.1]{BDGR18},  
\begin{gather}\label{eq:divisibility of DR}
\text{the polynomial class $\pi_*\left(\lambda_g\DR_g\left(-\sum a_i,a_1,\ldots,a_n\right)\right)\in R^{2g}(\oM_{g,n})$ is divisible by $a_n^2$}.
\end{gather}

The polynomiality of the class $\DR_g(A)\in R^g(\oM_{g,n})$ has been proved by A. Pixton and D. Zagier (we thank A.~Pixton for informing us about that), but the proof is not published~yet.

\subsection{Preliminary combinatorial definitions} \label{sec:srt-definitions}

By a \emph{stable tree}, we mean a stable graph $\Gamma$ with the first Betti number $b_1(\Gamma)$ equal to zero. A \emph{stable rooted tree} is a stable tree together with a choice of a vertex $v\in V(T)$ called the \emph{root}.

Consider a stable rooted tree $T$.
\begin{itemize}
\item We denote by $H_+(T)$ the set of half-edges of $T$ that are directed away from the root. Clearly, $L(T)\subset H_+(T)$. Let $H^e_+(T):=H_+(T)\backslash L(T)$ and $H^e_-(T):=H^e(T)\backslash H^e_+(T)$. 

\item A \emph{path} in $T$ is a sequence of pairwise distinct vertices $v_1,\ldots,v_k\in V(T)$ such that for any $1\le i\le k-1$, the vertices $v_i$ and $v_{i+1}$ are connected by an edge. 

\item A vertex $w\in V(T)$ is called a \emph{descendant} of a vertex $v\in V(T)$ if $v$ is on the unique path from the root to $w$. Denote by $\Desc[v]$ the set of all descendants of $v$. Note that $v\in\Desc[v]$.

\item A vertex $w$ is called a \emph{direct descendant} of $v$ if $w\in\Desc[v]$, $w\ne v$, and $w$ and $v$ are connected by an edge. In this case, the vertex $v$ is called the \emph{mother} of $w$. 

\item For two half-edges $h_1,h_2\in H_+(T)$, we say that $h_2$ is a \emph{descendant} of $h_1$ if $h_1=h_2$ or $v(h_2)\in\Desc[v(\iota(h_1))]$.
  
\item A function $l \colon V(T)\to\mbZ_{\ge 1}$ is called a \emph{level function} if the following conditions are satisfied:
\begin{itemize}
\item[a)] The value of $l$ on the root is equal to $1$.
	
\item[b)] If a vertex $v$ is the mother of a vertex $v'$, then $l(v')>l(v)$.

\item[c)] For every $1\le i\le \deg(l)$, the set $l^{-1}(i)$ is nonempty, where $\deg(l):=\max_{v\in V(T)}l(v)$.
\end{itemize}

\item There is a natural level function $l_T \colon V(T)\to\mbZ_{\ge 1}$ uniquely determined by the condition that if a vertex~$v$ is the mother of a vertex $v'$, then $l_T(v')=l_T(v)+1$. We call this level function \emph{canonical}. The number $\deg(T):=\deg(l_T)$ is called the \emph{degree} of~$T$.

\item For a level function $l\colon V(T)\to\mbZ_{\ge 1}$, it is convenient to extend it to $H_+(T)$ by taking $l(h):=k$ if the half-edge $h$ is attached to a vertex of level $k$.

\item For $k\ge 1$, we set $g_k(T):=\sum_{\substack{v \in V(T) \\ l_T(v) \leq k}} g(v)$.
\end{itemize}

Let $m\ge 0$ and $n\ge 1$. Let us consider stable rooted trees $T$ with at least $n+m$ legs, where we split the set of legs $L(T)=\{\sigma_i\}$ into three subsets:
\begin{itemize}
\item[a)] The legs $\sigma_1, \dotsc, \sigma_n$ are called the \emph{regular legs}. 

\item[b)] The legs $\sigma_{n+1},\ldots,\sigma_{n+m}$ are called the \emph{frozen legs}; we require that they are attached to the root. 

\item[c)] Any \emph{extra legs}, whose set is denoted by $F(T)$, correspond to additional marked points that we will eventually forget.
\end{itemize}
The set of such trees will be denoted by $\SRT_{g,n,m;\circ}$. We will also use the following notation:
\begin{align*}
&\SRT_{g,n,m}:=\left\{T\in\SRT_{g,n,m;\circ}\;|\;F(T)=\emptyset\right\}\subset\SRT_{g,n,m;\circ},\\
&\SRT^k_{g,n,m;\circ}:=\left\{T\in\SRT_{g,n,m;\circ}\;|\;|V(T)|=k\right\}\subset\SRT_{g,n,m;\circ}.
\end{align*}

Consider a tree $T\in\SRT_{g,n,m;\circ}$.
\begin{itemize}
\item A vertex of $T$ is called \emph{potentially unstable} if it becomes unstable once we forget all of the extra legs.

\item Let 
$$
H^{em}_+(T):=H^e_+(T)\sqcup\{\sigma_1,\ldots,\sigma_{n+m}\},\quad \tH^{em}_+(T):=H^e_+(T)\sqcup\{\sigma_1,\ldots,\sigma_{n}\}.
$$

\item For $h\in \tH_+^{em}(T)$, define $I_h:=\{1\le i\le n\;|\;\text{$\sigma_i$ is a descendant of $h$}\}\subseteq \llbracket n \rrbracket$.
\end{itemize}

A tree $T\in\SRT_{g,n,m;\circ}$ is called \emph{balanced} if 
\begin{itemize}
\item[a)] there are no extra legs attached to the root;

\item[b)] for every vertex except the root, there is at least one extra leg attached to it.
\end{itemize}
The set of all balanced trees will be denoted by $\SRT^{(b)}_{g,n,m;\circ}\subset\SRT_{g,n,m;\circ}$.

For a balanced tree $T$, define a function $q\colon H^e_+(T) \to \mbZ_{\ge 0}$ by requiring that for a half-edge $h\in H_+^e(T)$, there are exactly $q(h)+1$ extra legs attached to the vertex~$v=v(\iota(h))$. Given an $n$-tuple of nonnegative integers $\od=(d_1,\ldots,d_n)$, we extend the function $q$ to the set $\tH_+^{em}(T)$ by setting $q(\sigma_i):=d_i$, $1\le i\le n$.

We say that a balanced tree $T\in\SRT^{(b)}_{g,n,m;\circ}$ is \emph{complete} if the following conditions are satisfied:
\begin{itemize}
\item[a)] Every vertex has at least one descendant $v\in V(T)$ with $l_T(v)=\deg(T)$.

\item[b)] We have $l_T(\sigma_i)=\deg(T)$ for $1\le i\le n$. 

\item[c)] Each vertex $v$ with $l_T(v)=\deg(T)$ is attached to at least one regular leg. 

\item[d)] For every $1\le l\le\deg(T)$, the set of vertices $l_T^{-1}(l)$ contains at least one vertex that is not potentially unstable.
\end{itemize}
The set of all complete trees will be denoted by $\SRT^{(b,c)}_{g,n,m;\circ}\subset\SRT^{(b)}_{g,n,m;\circ}$.

We say that a tree $T\in\SRT^{(b,c)}_{g,n,m;\circ}$ is \emph{admissible} if for every $1\leq k < \deg(T)$, the following condition is satisfied:
\begin{align}\label{eq:admissibility-SRT}
\sum_{\substack{h \in H_+^e(T) \\ l_T(h)=k}} q(h) \leq 2 g_k(T)-2+m.
\end{align}
The set of all admissible trees will be denoted by $\SRT^{(b,c,a)}_{g,n,m;\circ}\subset\SRT^{(b,c)}_{g,n,m;\circ}$.

Note that the sets $\SRT^{(b)}_{g,n,m;\circ}$ and $\SRT^{(b,c)}_{g,n,m;\circ}$ are infinite except for a finite number of triples $(g,n,m)$. However, the set $\SRT^{(b,c,a)}_{g,n,m;\circ}$ is finite, which follows from  condition~\eqref{eq:admissibility-SRT}. 

\subsection{Conjectural tautological relations}

For a balanced tree $T\in\SRT^{(b)}_{g,n,m;\circ}$ and an $n$-tuple of nonnegative integers $\od=(d_1,\ldots,d_n)$, define
$$
[T,\od]:=\xi_{T*}\left(\prod_{h \in \tH_+^{em}(T)} \psi_h^{q(h)}\right) \in R^{\sum d_i+\#F(T)}(\oM_{g,n+m+\#F(T)}).
$$
Consider the map
$$
\ee \colon \oM_{g,n+m+\#F(T)} \lra \oM_{g,n+m}
$$
forgetting all of the extra legs, and the class
$$
\ee_*[T,\od]\in R^{\sum d_i}(\oM_{g,n+m}).
$$

\begin{definition}\label{def:Bgdm-MainDefinition}
For $m\ge 0$, $g\ge 0$, $n\ge 1$, and $\od=(d_1,\ldots,d_n)\in\mbZ_{\ge 0}^n$, we define
\begin{equation*}
B^m_{g,\od}:=\sum_{T\in\SRT^{(b,c,a)}_{g,n,m;\circ}}(-1)^{\deg(T)-1}\ee_*[T,\od]\in R^{\sum d_i}(\oM_{g,n+m}).
\end{equation*}
\end{definition}

Note that
\begin{itemize}
\item the class $B^0_{g,\od}$ coincides with the class $B^g_{d_1,\ldots,d_n}$ from the paper~\cite{BGR19},

\item the class $B^1_{g,2g}$ coincides with the class $B^g$ from the paper~\cite{BHIS21}.
\end{itemize}

We can now formulate our first conjecture.

\begin{conjecture}\label{conjecture1}
For any $m\ge 2$, $g\ge 0$, $n\ge 1$, and $\od=(d_1,\ldots,d_n)\in\mbZ_{\ge 0}^n$ such that $\sum d_i\ge 2g+m-1$, we have $B^m_{g,\od}=0$ in $R^{\sum d_i}(\oM_{g,n+m})$.
\end{conjecture}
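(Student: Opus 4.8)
The strategy I would take is to relate Conjecture~\ref{conjecture1} to the $m=0$ case (Conjecture~\ref{conjecture3}, known from \cite{BGR19} in low genus, and proved in this paper for $n=1$ or $g=0$), together with the reduction result (Theorem~\ref{theorem:reductionSmall}) and the reformulation via the classes $B^m_{g,\od}$ (Theorems~\ref{theorem:arbitrary m reformulation} and~\ref{theorem:equivalent relations for m2}). Concretely, I would first use Theorem~\ref{theorem:reductionSmall} to reduce to the ``small'' subsystem $\sum d_i=2g+m-1$, $d_i\ge 1$; this already cuts the problem down to a finite set of relations for each $(g,n,m)$ and removes the analytic subtleties with large psi-powers. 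The point of isolating this case is that the combinatorics of $\SRT^{(b,c,a)}_{g,n,m;\circ}$ simplifies: the admissibility inequalities \eqref{eq:admissibility-SRT} become nearly tight, so the admissible trees are forced to distribute the total psi-degree very rigidly among the levels.

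The next step would be to compare the class $B^m_{g,\od}$ with $B^{m-1}_{g,\od}$ (or with $B^0$) by analyzing the effect of adding one frozen leg at the root. Adding a frozen leg changes the stability condition at the root from $2g(v_{\mathrm{root}})-2+n(v_{\mathrm{root}})>0$ by shifting $n(v_{\mathrm{root}})$, and it changes the right-hand side of \eqref{eq:admissibility-SRT} at level $k$ by replacing $2g_k(T)-2+m-1$ with $2g_k(T)-2+m$, \emph{i.e.}\ by $+1$. I would set up a bijective/telescoping argument: the trees contributing to $B^m$ that are ``new'' relative to $B^{m-1}$ (those using the extra slack in the admissibility bound) should be organized into cancelling pairs via an involution that merges/splits a top-level vertex, using the sign $(-1)^{\deg(T)-1}$ to produce the cancellation. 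The frozen legs play the role of ``pinning'' the root so that this involution is well-defined and compatible with the forgetful map $\ee$ and the pushforward along $\xi_T$.

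The third ingredient is to feed in the genuinely geometric input. For $m=0$ the vanishing/identity is the content of Conjecture~\ref{conjecture3}, which is established here via the Liu--Pandharipande-type argument of Section~\ref{section:proof of main theorem} in the case $n=1$ (and Section~\ref{section:genus 0} for $g=0$). For general $n$ and $m\ge 2$, I expect the argument should not need new geometry beyond $m=0$: the extra frozen legs only interact with the root and can be forgotten by a further pushforward $\oM_{g,n+m}\to\oM_{g,n}$, under which $B^m_{g,\od}$ should map (up to the combinatorial rearrangement above) to an expression built from the $B^0$ classes with shifted arguments, by the string equation \eqref{eq:Str1}. So the plan is: (i) push forward the conjectured relation for $m$ down to $\oM_{g,n}$ and identify the image with a combination of $m=0$ relations; (ii) argue that the pushforward map is injective on the relevant tautological subspace, or alternatively that the relation for $m$ is the unique tautological lift of its pushforward that is compatible with the restriction to the boundary divisors where the frozen legs bubble off, and check that lift directly against $B^m_{g,\od}$.

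\textbf{Main obstacle.} The hard part will be step (ii): controlling the kernel of the forgetful pushforward $\oM_{g,n+m}\to\oM_{g,n}$ on tautological classes, equivalently showing that no ``genuinely new'' tautological relation involving the frozen markings is being asserted beyond what the $m=0$ case and the string equation already give. Pushforward along a forgetful map is far from injective on $R^*$, so one cannot simply descend; instead one must pair the relation with a spanning set of tautological classes on $\oM_{g,n+m}$ (or restrict to all boundary strata) and check vanishing there, and this is exactly where the rigid combinatorics of admissible balanced complete rooted trees from Section~\ref{sec:srt-definitions} — together with the reformulation Theorem~\ref{theorem:equivalent relations for m2} — has to do the real work. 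I would expect the cleanest route is actually to prove the equivalent ``elegant'' form from Theorem~\ref{theorem:equivalent relations for m2} first, reduce it by Theorem~\ref{theorem:reductionSmall}, and then run an explicit induction on $g$ and on $\deg(T)$, with the base cases $n=1$ and $g=0$ supplied by Theorems~\ref{theorem:main} and~\ref{theorem:main2}.
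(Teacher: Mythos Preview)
The statement you are attempting to prove is Conjecture~\ref{conjecture1}: it is \emph{not} proved in the paper in general. The paper establishes it only in the two special cases $n=1$ (Theorem~\ref{theorem:main}, via the Liu--Pandharipande method of Section~\ref{subsection:VanishingOnePoint}) and $g=0$ (Theorem~\ref{theorem:main2}, via CohFTs spanning $H^*(\oM_{0,n})$). For general $g,n$ it remains open, so there is no ``paper's own proof'' to compare against.

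Your plan has a structural gap that would prevent it from working even as a strategy. You propose to reduce $m\ge 2$ to $m=0$ (Conjecture~\ref{conjecture3}) and then feed in the ``genuinely geometric input'' from that case. But Conjecture~\ref{conjecture3} is in exactly the same state as Conjecture~\ref{conjecture1}: proved only for $n=1$ and $g=0$. So even if your reduction from $m$ to $m-1$ (or to $m=0$) went through perfectly, you would arrive at an open conjecture, not a theorem. Moreover, the reduction itself goes in the wrong direction: in the paper the logical flow is that the $m\ge 2$ vanishing is \emph{stronger} than information at smaller $m$ (the $m=0,1$ statements have nontrivial right-hand sides $A^0_{g,\od}$, $A^1_{g,\od}$), and there is no mechanism shown by which knowing $B^0_{g,\od}=A^0_{g,\od}$ would force $B^m_{g,\od}=0$ for $m\ge 2$.

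Your final induction scheme is also not well-founded. Having the base cases $n=1$ (all $g$) and $g=0$ (all $n$) does not allow an induction on $g$ and $\deg(T)$ to reach arbitrary $(g,n)$: the inductive step you sketch (via Theorem~\ref{theorem:reductionSmall} and the proof of Lemma~\ref{lem:reductionpsi}) only reduces $\tB^m_{g,(d_1,\ldots,d_i+1,\ldots,d_n)}-\psi_i\tB^m_{g,\od}$ to instances with strictly smaller $(g',n')$, which lets you move within the system of relations for fixed $m$ but does not manufacture the vanishing at the bottom $\sum d_i=2g+m-1$ with $d_i\ge 1$ for general $(g,n)$ from the $n=1$ and $g=0$ cases alone. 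That bottom layer is precisely what remains conjectural.
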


In order to present our second conjecture, we need more definitions. Let $n,k\ge 1$. Consider a stable tree $T\in\ST^k_{g,n,1}$ and integers $a_1,\ldots,a_{n+1}$ such that $a_1+\cdots+a_{n+1}=0$. There is a unique way to assign an integer~$a(h)$ to each half-edge $h\in H(T)$ in such a way that the following conditions hold:
\begin{itemize}

\item[a)] For any leg $\sigma_i\in L(T)$, we have $a(\sigma_i)=a_{i}$. 

\item[b)] If $h\in H^e(T)$, then $a(h)+a(\iota(h))=0$. 

\item[c)] For any vertex $v\in V(T)$, we have $\sum_{h\in H[v]}a(h)=0$. 

\end{itemize}
Consider the space $\oM_T=\prod_{v\in V(T)}\oM_{g(v),n(v)}$. For each vertex $v\in V(T)$, the numbers $a(h)$, $h\in H[v]$, define the double ramification cycle $\DR_{g(v)}\left(A_{H[v]}\right)\in R^{g(v)}(\oM_{g(v),n(v)})$, where $A_{H[v]}$ is the tuple $(a(h_1),\ldots,a(h_{n(v)}))$, where $\{h_1,\ldots,h_{n(v)}\}=H[v]$. If we multiply all of these DR cycles, we get the class
$$
\prod_{v\in V(T)}\DR_{g(v)}\left(A_{H[v]}\right)\in H^{2g}(\oM_T).
$$
We define a class $\DR_T(a_1,\ldots,a_{n+1})\in R^{g+k-1}(\oM_{g,n+1})$ by 
\begin{gather*}
\DR_T(a_1,\ldots,a_{n+1}):=\left(\prod_{h\in H^e_+(T)}a(h)\right)\xi_{T*}\left(\prod_{v\in V(T)}\DR_{g(v)}\left(A_{H[v]}\right)\right).
\end{gather*}
After multiplication by $\lambda_g$, this class becomes a polynomial in $a_1,\ldots,a_{n+1}$ with  coefficients in the space $R^{2g+k-1}(\oM_{g,n+1})$, which is homogeneous of degree $2g+k-1$. To the stable tree~$T$, we also assign a number~$C(T)$~by setting 
$$
C(T):=\prod_{v\in V(T)}\frac{r(v)}{\sum_{\tv\in\Desc[v]}r(\tv)}.
$$

We introduce the following cohomology class:
$$
\check{A}_g^k(a_1,\ldots,a_{n+1}):=\sum_{T\in\SRT^k_{g,n,1}}C(T)\lambda_g\DR_T(a_1,\ldots,a_{n+1})\in R^{2g+k-1}(\oM_{g,n+1}),
$$
depending on integers $a_1,\ldots,a_{n+1}$ with vanishing sum. The class 
$$
\check{A}_g^k\left(a_1,\ldots,a_n,-\sum a_i\right)
$$
is a polynomial in $a_1,\ldots,a_n$, homogeneous of degree $2g+k-1$. For an $n$-tuple of integers $\od=(d_1,\ldots,d_n)\in\mbZ_{\ge 0}^n$ satisfying $\sum d_i\ge 2g$, define 
\begin{gather*}
A^1_{g,\od}:=\Coef_{a_1^{d_1}\cdots a_n^{d_n}}\check{A}_g^{\sum d_i-2g+1}\left(a_1,\ldots,a_n,-\sum a_i\right)\in R^{\sum d_i}(\oM_{g,n+1}).
\end{gather*}

We can now present our second conjecture.

\begin{conjecture}\label{conjecture2}
For any $g\ge 0$, $n\ge 1$, and an $n$-tuple of nonnegative integers $\od=(d_1,\ldots,d_n)$ with $\sum d_i\ge 2g$, we have $B^1_{g,\od}=A^1_{g,\od}$ in $R^{\sum d_i}(\oM_{g,n+1})$.
\end{conjecture}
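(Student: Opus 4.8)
The two regimes in which this identity is within reach — $n=1$ with arbitrary $g$, and $g=0$ with arbitrary $n$ — require genuinely different arguments, and I would treat them separately; the full statement seems to need new input.

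For \emph{$n=1$ and arbitrary $g$}, the plan is to obtain both sides of $B^1_{g,\od}=A^1_{g,\od}$ (here $\od=(d)$) from a single equivariant localization computation on the moduli space of stable relative maps to $(\mbP^1,\infty)$ of degree $d$, following the method of \cite{LP11} (see the appendix for the relevant localization formula). The starting point is a $\psi$-monomial on this relative moduli space whose degree exceeds the dimension, hence vanishes; one expands its push-forward to $\oM_{g,2}$ by the Atiyah--Bott formula for the natural $\mbC^*$-action on the target. The fixed loci split into a ``non-rubber'' part over a neighbourhood of $0\in\mbP^1$ and a ``rubber'' part over $\infty$. After the standard simplification of the Hodge integrals on the non-rubber contributions, these should reorganise precisely into the caterpillar-type stable rooted trees decorated by powers of $\psi$ at the half-edges that define $B^1_{g,\od}$ in Definition~\ref{def:Bgdm-MainDefinition}, with the sign $(-1)^{\deg(T)-1}$ arising from the alternating sum over the number of levels of the localization graph, and with the set $\SRT^{(b,c,a)}$ cut out because the admissibility inequality~\eqref{eq:admissibility-SRT} is exactly the constraint imposed by stability and by non-vanishing of the relevant Hodge integrals; the rubber contributions produce chains of classes $\lambda_{g(v)}\DR_{g(v)}$, and the rational coefficients $C(T)=\prod_{v}r(v)/\sum_{\tv\in\Desc[v]}r(\tv)$ emerge from summing the equivariant weights over the ways of splitting the degree along the rubber, by the usual rubber-calculus identities. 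Matching the two reassembled sums against $B^1_{g,\od}$ and against the definition of $\chA_g^k$ gives the identity. It should help to first reduce, in the style of Theorem~\ref{theorem:reductionSmall}, to the minimal case $d=2g$ — that is, to $B^1_{g,(2g)}=\lambda_g\DR_g(1,-1)$, the conjecture of \cite{BHIS21} proved in the Gorenstein quotient of $\oM_{g,2}$ in \cite{Gub22} — by pushing forward along a forgetful map, using the string equation~\eqref{eq:Str1} on the $B$-side and the homogeneity of $\lambda_g\DR_T$ together with the divisibility~\eqref{eq:divisibility of DR} on the $A$-side.

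For \emph{$g=0$ and arbitrary $n$}, the statement becomes fully explicit. Since $\lambda_0=1$ and $\DR_0$ is the fundamental class, $A^1_{0,\od}$ collapses to a combination of boundary strata of $\oM_{0,n+1}$ weighted only through $\prod_{h\in H^e_+(T)}a(h)$ and the factors $C(T)$ with $r(v)=n(v)-2$, while on the $B$-side the admissibility inequality~\eqref{eq:admissibility-SRT} (whose right-hand side is $m-2=-1<0$ here) kills every tree with more than one level, so $B^1_{0,\od}$ reduces to the single-vertex term $\psi_1^{d_1}\cdots\psi_n^{d_n}$. Thus one must prove a genus-$0$ tautological identity expressing a $\psi$-monomial as a weighted sum of boundary strata, and I would do this by induction on $n$: one verifies that both sides obey the same recursion under the forgetful push-forward $\oM_{0,n+1}\to\oM_{0,n}$ (controlled by the string equation~\eqref{eq:Str1}) and under restriction to each boundary divisor, using the classical facts that $R^*(\oM_{0,n+1})$ is spanned by boundary strata and that genus-$0$ $\psi$-classes are explicit sums of boundary divisors; the base case $n=3$ is a direct check.

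The main obstacle is the combinatorial bookkeeping in the $n=1$ localization: the vanishing class must be chosen so that the two halves of its localization expansion are \emph{literally} the sums defining $B^1_{g,\od}$ and $A^1_{g,\od}$ — with the signs, the admissibility cut on $\SRT^{(b,c,a)}$, the effect of the forgetting map $\ee$ on potentially unstable vertices, and the weights $C(T)$ all appearing with exactly the right normalization. This matching of normalizations, rather than any single geometric input, is the delicate point, and it is also why the remaining range $g\ge 1$, $n\ge 2$ stays conjectural: neither the rubber localization in the form above nor the genus-$0$ explicit computation extends to it without new ideas.
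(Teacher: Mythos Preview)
Your $n=1$ plan has a genuine gap. The localization on $\oM_{g,n}(\mbP^1,\mu)$ does not package its output the way you describe: each rubber fixed locus contributes a \emph{single} factor $\lambda_{g_2}\DR_{g_2}(1,-1)$, not a chain, and the coefficients $C(T)$ do not arise from summing equivariant weights. In the paper the $C(T)$-structure enters only through the separate reduction $A^1_{g,d}=\psi_1^{d-2g}A^1_{g,2g}$, which is proved via the BSSZ formula $\psi_1\lambda_g\DR_g(1,-1)=\sum_{g_1+g_2=g}\frac{g_2}{g}\lambda_{g_1}\DR_{g_1}(1,-1)\diamond\lambda_{g_2}\DR_{g_2}(1,-1)$; on the $B$-side the analogous reduction $B^1_{g,d}=\psi_1^{d-2g}B^1_{g,2g}$ follows directly from the definition, not from a forgetful push-forward. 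After this reduction the paper does use localization, but on \emph{degree~$1$} relative maps and with the equivariant class $\ev_2^*([0])\cdot e_{\mbC^*}(B)$ (the Euler class of the obstruction bundle, not a $\psi$-monomial): this is what makes the Hodge factor $\Lambda_g^\vee(-u)\Lambda_g^\vee(u)=(-1)^g u^{2g}$ appear and produces the \emph{single} new relation~\eqref{eq:NewRelation} mixing $\psi$'s with $\lambda_g\DR_g(1,-1)$. The equality $B^1_{g,2g}=\lambda_g\DR_g(1,-1)$ then follows by \emph{iterating}~\eqref{eq:NewRelation} inductively on the length of the caterpillar, in the spirit of Lemma~\ref{lem:1ptInductiveLemma}; there is no single localization identity whose two halves are literally $B^1_{g,\od}$ and $A^1_{g,\od}$.

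Your $g=0$ approach is plausible but different from the paper's. You are right that admissibility forces $\deg(T)=1$ when $g=0$ and $m=1$, so $B^1_{0,\od}=\prod_i\psi_i^{d_i}$; but rather than attack the resulting tautological identity directly, the paper argues indirectly: classes of the form $c_{0,n}(\otimes_j v_j)$ for CohFTs span $H^*(\oM_{0,n})$ (Proposition~\ref{prop61}, proved by an $R$-matrix deformation of a $2$-dimensional trivial CohFT), and the integrals of $B^1_{0,\od}-A^1_{0,\od}$ against all such classes vanish by Theorems~\ref{theorem:reducing transformation for F-CohFT} and~\ref{theorem:DR correlators for F-CohFT} combined with the known genus-$0$ equality of DR and DZ correlators. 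Your inductive scheme may well work, but the combinatorics on the $A$-side (matching $\Coef_{\prod a_i^{d_i}}\chA_0^{\sum d_i+1}$ against a $\psi$-monomial) is not obvious and would need an explicit argument.
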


Note that the $n=1$ case of this conjecture appeared in~\cite{BHIS21}, and in~\cite{Gub22}, the author proved that the relation $B^1_{g,d}=A^1_{g,d}$ is true in the Gorenstein quotient of $R^d(\oM_{g,2})$.

Let us now recall the conjecture from~\cite{BGR19}, which together with our Conjectures~\ref{conjecture1} and~\ref{conjecture2} naturally forms a series of conjectures involving the classes $B^m_{g,\od}$ for all $m\ge 0$. Let $n,k\ge 1$. Consider the map $\pi\colon\oM_{g,n+1}\to\oM_{g,n}$ forgetting the last marked point. By~\cite[Lemma~2.2]{BGR19}, the class $\pi_*\check{A}_g^k(a_1,\ldots,a_n,-\sum a_i)$ (as a polynomial in $a_1,\ldots,a_n$) is divisible by~$\sum a_i$. Following the paper~\cite{BGR19}, we define
$$
A_g^k(a_1,\ldots,a_n):=\frac{1}{\sum a_i}\pi_*\check{A}_g^k\left(a_1,\ldots,a_n,-\sum a_i\right)\in R^{2g+k-2}(\oM_{g,n}),
$$
which is a polynomial in $a_1,\ldots,a_n$, homogeneous of degree $2g+k-2$, and then define
\begin{gather*}
A^0_{g,\od}:=\Coef_{a_1^{d_1}\cdots a_n^{d_n}}A_g^{\sum d_i-2g+2}(a_1,\ldots,a_n)\in R^{\sum d_i}(\oM_{g,n}) 
\end{gather*}
for any $n$-tuple $\od=(d_1,\ldots,d_n)\in\mbZ_{\ge 0}^n$ satisfying $\sum d_i\ge 2g-1$.

\begin{conjecture}[\textit{cf.} \cite{BGR19}]\label{conjecture3}
For any $g\ge 0$, $n\ge 1$, and an $n$-tuple of nonnegative integers $\od=(d_1,\ldots,d_n)$ with $\sum d_i\ge 2g-1$, we have $B^0_{g,\od}=A^0_{g,\od}$ in $R^{\sum d_i}(\oM_{g,n})$.
\end{conjecture}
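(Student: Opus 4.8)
The plan is to prove $B^0_{g,\od}=A^0_{g,\od}$ by realizing both tautological classes as two different readings of a single $\mathbb{C}^*$-virtual localization computation on a moduli space of stable relative maps to $(\mbP^1,\infty)$, exactly the mechanism that establishes the $n=1$ case in Theorem~\ref{theorem:main}. Concretely, I would fix ramification data $a_1,\dots,a_n$ and $-\sum a_i$ (the same data that defines the double ramification cycle $\DR_g$), localize on the relevant space of stable relative maps to $(\mbP^1,\infty)$, and apply the localization formula recalled in the appendix, which is the variation of the method of \cite{LP11}. The output is an identity, polynomial in the $a_i$, among the contributions of the torus-fixed loci pushed forward to $\oM_{g,n}$; extracting the coefficient of $a_1^{d_1}\cdots a_n^{d_n}$ in cohomological degree $\sum d_i$ is what should produce the desired relation.

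The two sides of the conjecture should correspond to two families of fixed loci. In the loci where entire subcurves contract over $0$, the contracted components carry double ramification cycles on their stable models, and the way the expanded target has degenerated over $\infty$ into a chain of bubbles is recorded precisely by a stable tree $T\in\SRT^k_{g,n,1}$. The edge multiplicities contribute the factor $\prod_{h\in H^e_+(T)}a(h)$, the contributions of the components over $\infty$ yield the Hodge class $\lambda_g$ through $\lambda_g$-vanishing, and the normalization of the rubber moduli along each level produces the descendant-ratio weights $C(T)=\prod_{v}\frac{r(v)}{\sum_{\tv\in\Desc[v]}r(\tv)}$. This reassembles the class $\check{A}_g^k$, and after the forgetful pushforward $\pi$ that divides by $\sum a_i$, the class $A^0_{g,\od}$. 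The complementary loci, carrying nontrivial multiple covers over $\infty$, produce psi-class decorations; organizing them by the number of rubber levels yields the sign $(-1)^{\deg(T)-1}$ and, after forgetting the auxiliary marked points, exactly the balanced, complete, admissible trees of Definition~\ref{def:Bgdm-MainDefinition}, so that this family reassembles $B^0_{g,\od}$. The admissibility inequality \eqref{eq:admissibility-SRT} with $m=0$ should match, level by level, the degree bound that makes the corresponding fixed-locus contribution nonzero.

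To tame the combinatorics for general $n$ I would combine this with two reductions. First, by a string-equation argument of the type behind Theorem~\ref{theorem:reductionSmall} (using \eqref{eq:Str1} to pass between different $\od$), reduce Conjecture~\ref{conjecture3} to its smallest-degree instances $\sum d_i=2g-1$ with all $d_i\ge 1$. Second, induct on $n$: specializing $a_n\to 0$ and using the divisibility \eqref{eq:divisibility of DR} of $\pi_*\lambda_g\DR_g$ by $a_n^2$ should let one peel off the last regular leg, passing from the $n$-leg identity to the $(n-1)$-leg identity on the vertex to which $\sigma_n$ is attached. The base cases are then supplied by Theorem~\ref{theorem:main} ($n=1$, arbitrary $g$) and Theorem~\ref{theorem:main2} ($g=0$, arbitrary $n$).

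The main obstacle is the branching combinatorics that appears only for $n\ge 2$. When $n=1$ every relevant tree is a caterpillar, so the localization graphs form a linear chain and the level structure of the rubber matches the chain of vertices directly, as in \cite{LP11}; the identification of the two tree-sums is then essentially one-dimensional. For $n\ge 2$ the regular legs $\sigma_1,\dots,\sigma_n$ distribute over genuinely branching vertices at possibly different levels, and one must prove a new combinatorial identity equating the alternating sum of psi-decorated balanced-complete-admissible trees with the $C(T)$-weighted, $\pi_*$-pushed sum of $\DR$-decorated trees. Verifying that the edge factors $\prod a(h)$, the descendant-ratio weights $C(T)$, and the level signs recombine correctly across all branchings --- and in particular that the single forgetful pushforward $\pi_*$ present in the definition of $A^0_{g,\od}$ but absent from $B^0_{g,\od}$ is absorbed uniformly by the string-equation corrections of \eqref{eq:Str1} --- is the technical heart of the argument and the step I expect to be hardest.
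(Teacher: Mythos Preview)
The statement you are attempting to prove is Conjecture~\ref{conjecture3}, which remains \emph{open} in the paper for general $g$ and $n$; the paper only establishes the cases $n=1$ (via Conjecture~\ref{conjecture2} and \cite[Theorem~2.4]{BHIS21}) and $g=0$. So you are proposing a strategy for an open problem, not recovering a proof the paper gives.

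That said, your proposal has two concrete gaps beyond the acknowledged branching difficulty. First, your reduction step appeals to ``an argument of the type behind Theorem~\ref{theorem:reductionSmall},'' but that theorem is proved only for $m\ge 2$, and the paper explicitly explains in Section~\ref{section:equivalent formulation} why the analogue for $m\le 1$ is more subtle: Conjectures~\ref{conjecture2} and~\ref{conjecture3} have nontrivial right-hand sides $A^1_{g,\od}$ and $A^0_{g,\od}$, so the inductive mechanism of Lemma~\ref{lem:reductionpsi} (which uses the \emph{vanishing} of $\tB^{m'}_{g',\od'}$ for smaller $(g',n')$) breaks down. You would need a genuinely new argument to reduce to $\sum d_i=2g-1$, and it would have to track how $A^0_{g,\od}$ behaves under the same operations. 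Second, your induction on $n$ via $a_n\to 0$ and \eqref{eq:divisibility of DR} controls only the $A$-side; you give no corresponding mechanism on the $B$-side to strip off a leg from the tree classes in Definition~\ref{def:Bgdm-MainDefinition}, so the inductive step does not close.

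More fundamentally, your localization picture conflates two different things. In the paper's $n=1$ proof, localization on $\oM_{g,1}(\mbP^1,1)$ with the class $I_g=\ev_2^*([0])e_{\mbC^*}(B)$ produces the single relation~\eqref{eq:NewRelation}, and then a separate combinatorial induction (Equation~\eqref{eq:extended new relation}) assembles many instances of~\eqref{eq:NewRelation} and of the Liu--Pandharipande relations~\eqref{eq:LP1},~\eqref{eq:LP2} into $B^1_{g,2g}=\lambda_g\DR_g(1,-1)$. The classes $B^m_{g,\od}$ do \emph{not} arise directly as a family of fixed loci; they are built up by iterated use of two-vertex relations. Your description has the fixed loci themselves producing the full admissible-tree sum with the sign $(-1)^{\deg(T)-1}$, which is not what happens. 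For $n\ge 2$ you would need both a new localization identity generalizing~\eqref{eq:NewRelation} and a new inductive assembly procedure on genuinely branching trees; neither is supplied, and the latter is essentially the content of the conjecture.
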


We can prove all of the above conjectures in the following cases.

\begin{theorem}\label{theorem:main}
Conjectures~\ref{conjecture1},~\ref{conjecture2}, and~\ref{conjecture3} are true for $n=1$.
\end{theorem}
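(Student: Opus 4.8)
The plan is to reduce all three conjectures in the case $n=1$ to a single statement about genus-$g$, one-pointed (with extra marked points) tautological classes and then prove that statement using the localization method from~\cite{LP11}. First I would record the $n=1$ specializations of the three classes $B^m_{g,d}$, $A^1_{g,d}$, $A^0_{g,d}$: here every stable rooted tree in $\SRT^{(b,c,a)}_{g,1,m;\circ}$ is a ``caterpillar'' — a linear chain of vertices with the root carrying the $m$ frozen legs and leg $\sigma_1$ (the single regular leg) sitting at the end of level $\deg(T)$ — decorated only by powers of $\psi$ at the half-edges along the chain. This is exactly the picture drawn in the Example of the Introduction, and the admissibility condition~\eqref{eq:admissibility-SRT} becomes the inequalities $d_1+\cdots+d_i+i-1\le 2(g_1+\cdots+g_i)-2+m$ defining $\cS^{m,k}_{g,d}$. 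Because there is only one regular leg, the classes $\check A^k_g(a,-a)$ and $A^k_g(a)$ likewise collapse to sums over linear DR-chains weighted by the coefficients $C(T)=\prod g_i/(g_i+\cdots+g_k)$, so the statements to be proved are precisely the three displayed identities of the Example.

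The main step is then to prove the $m\ge 2$ relation $B^m_{g,d}=0$ in $R^d(\oM_{g,m+1})$ for $d\ge 2g-1+m$, and to deduce the $m=1$ and $m=0$ statements from it together with the properties of the DR cycle already recorded in the excerpt (polynomiality, homogeneity, the divisibility~\eqref{eq:divisibility of DR}, and the description of $\lambda_g\DR_g(1,-1)$). For the core $m\ge 2$ case I would follow~\cite{LP11}: work on the moduli space of stable relative maps to $(\mbP^1,\infty)$, with the single regular point mapping with high multiplicity and the $m$ frozen points (and any extra points) mapping to fixed generic points of $\mbP^1$, and apply virtual localization with respect to the standard $\mbC^*$-action on $\mbP^1$. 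The localization graphs are chains of ``bubbles'' over $0$ and $\infty$ connected through the interior, and pushing forward to $\oM_{g,m+1}$ one gets, on one side, the caterpillar classes $\ee_*[T,\od]$ with their signs $(-1)^{\deg(T)-1}$ coming from the alternating contributions of the chain length, and on the other side a class supported where the relative multiplicity is sufficiently large, which for $d\ge 2g-1+m$ forces a dimension constraint to fail and hence the whole expression to vanish. Concretely, the proof extracts the coefficient of an appropriate power of the equivariant parameter in the localization relation; the degree-$d$ part of that coefficient is exactly $B^m_{g,d}$, and the geometric input (a relative map of the required numerical type simply does not exist, or the obstruction bundle kills the contribution) yields $B^m_{g,d}=0$. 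The appendix of the paper, which reviews precisely this localization formula, is the toolbox for this step.

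The hard part will be the bookkeeping in the localization computation: matching the combinatorics of the $\mbC^*$-fixed loci (chains of rational bridges, the distribution of genus and of the extra/frozen markings among the bubbles, the ``potentially unstable'' vertices that disappear after forgetting extra legs) with the definitions of $\SRT^{(b,c,a)}_{g,1,m;\circ}$ — completeness conditions (a)--(d) and the balanced condition — and verifying that the signs, the $\psi$-powers $q(h)$ at half-edges, and the multiplicity weights $a(h)$ all come out as in Definition~\ref{def:Bgdm-MainDefinition}. A secondary subtlety is the passage from $m\ge 2$ down to $m=1$ and $m=0$: one must see that applying the forgetful pushforward $\pi_*$ and using the string equation~\eqref{eq:Str1} turns the vanishing $B^{m}_{g,\od}=0$ into the nontrivial identities $B^1_{g,\od}=A^1_{g,\od}$ and $B^0_{g,\od}=A^0_{g,\od}$ — here the DR-chain side $\check A^k_g$, $A^k_g$ must be recognized as exactly the ``boundary term'' produced by the lowest-multiplicity bubble over $0$, which for $m\le 1$ is no longer dimensionally excluded and survives as the class $\lambda_g\DR_g(1,-1)$ (or its pushforward). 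Once these identifications are in place, the three conjectures for $n=1$ follow, and in particular the main conjecture of~\cite{BHIS21} ($m=1$, $n=1$) and the $n=1$ case of~\cite{BGR19} ($m=0$).
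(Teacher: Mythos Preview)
Your overall strategy---localization in the space of stable relative maps to $(\mbP^1,\infty)$---is indeed the engine behind the paper's proof, and your description of the $n=1$ combinatorics (caterpillar trees, the admissibility inequalities collapsing to $\cS^{m,k}_{g,d}$) is accurate. But the reduction you propose from $m\ge 2$ down to $m=1$ and $m=0$ does not work, and this is the essential gap.

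Pushing the vanishing $B^m_{g,\od}=0$ forward along a forgetful map and applying the string equation can only produce zero, never the nontrivial class $A^1_{g,\od}$. The DR term $\lambda_g\DR_g(1,-1)$ is not a ``surviving boundary bubble'' of the same localization that gives $B^m_{g,d}=0$ for $m\ge2$; it has to be manufactured by a \emph{different} equivariant integrand. In the paper the $m\ge2$ case is handled purely combinatorially from the already-known Liu--Pandharipande relations (Propositions~1.1 and~1.2 of~\cite{LP11}); no new localization is carried out there. The $m=1$ case, by contrast, requires a \emph{new} relation (Equation~\eqref{eq:NewRelation}) obtained by localizing the class $I_g=\ev_2^*([0])\,e_{\mbC^*}(B)$ with $B=R^1\pi_*f^*\mcO_{\mbP^1}(-1)$ on $\oM_{g,1}(\mbP^1,1)$. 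The Euler class of $B$ is what inserts the factor $\lambda_g$, and the rubber fixed locus $\oM^\sim_{g,0}(\mbP^1,1,1)$ over $\infty$ is what produces $\DR_g(1,-1)$; neither of these ingredients is visible in the $m\ge2$ localization you describe. Once this new relation is in hand, a short induction (parallel to, but distinct from, the $m\ge2$ induction) yields $B^1_{g,2g}=\lambda_g\DR_g(1,-1)$. The $m=0$ case is then deduced from the $m=1$ case by the argument already in~\cite{BHIS21}, not by a further forgetful pushforward of $B^m$.

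So: keep your localization plan, but you need two separate computations---one (or just citation of \cite{LP11}) for $m\ge2$, and a second one with the twisted class $e_{\mbC^*}(B)$ to generate the DR side for $m=1$.
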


\begin{theorem}\label{theorem:main2}
Conjectures~\ref{conjecture1},~\ref{conjecture2}, and~\ref{conjecture3} are true for $g=0$.
\end{theorem}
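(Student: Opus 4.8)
The plan is to exploit the drastic simplifications that occur in genus~$0$ and to reduce all three conjectures to combinatorial identities among explicit rational numbers. In genus~$0$ one has $\lambda_0=1$, and for every admissible integer vector $A$ the cycle $\DR_0(A)$ is the fundamental class of the corresponding moduli space; hence every factor $\DR_{g(v)}\!\left(A_{H[v]}\right)$ occurring in the definitions of $\check A_0^{\,k}$, $A_0^{\,k}$, $A^1_{0,\od}$ and $A^0_{0,\od}$ is trivial, and these classes become $\mbZ$- or $\mbQ$-linear combinations of push-forwards $\xi_{T*}\!\left(\prod_{h}\psi_h^{q(h)}\right)$ of psi-monomials along gluing maps of genus-$0$ stable trees, with coefficients assembled from the weights $C(T)=\prod_{v}\frac{r(v)}{\sum_{\tv\in\Desc[v]}r(\tv)}$, the integers $a(h)$, and (for Conjecture~\ref{conjecture3}) one forgetful push-forward. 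By Definition~\ref{def:Bgdm-MainDefinition} the classes $B^m_{0,\od}$ have exactly the same shape. Thus, for $g=0$, each of Conjectures~\ref{conjecture1},~\ref{conjecture2} and~\ref{conjecture3} becomes an equality between two explicit elements of the tautological ring $R^*(\oM_{0,N})$, $N=n+m$ or $n+m+1$, each written as a $\mbQ$-combination of decorated boundary-stratum classes.

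I would then pass to intersection numbers. Since $\oM_{0,N}$ is smooth and projective and its cohomology equals its tautological ring, the Poincar\'e pairing on $R^*(\oM_{0,N})$ is perfect; moreover, by Keel's presentation, the fundamental classes of boundary strata span $R^*(\oM_{0,N})$. So it suffices to check that the two sides of each conjecture pair identically against every boundary-stratum class of complementary dimension. Using the string equation~\eqref{eq:Str1} to carry out the push-forwards $\ee_*$ appearing in $B^m_{0,\od}$, and then the gluing (splitting) axiom together with the genus-$0$ psi-integrals $\int_{\oM_{0,p}}\psi^{e_1}\cdots\psi^{e_p}=\binom{p-3}{e_1,\dots,e_p}$ to evaluate the pairings, one turns each conjecture into a purely combinatorial statement: an alternating sum over $\SRT^{(b,c,a)}_{0,n,m;\circ}$, weighted by $(-1)^{\deg(T)-1}$ and by multinomial coefficients, equals a sum over stable trees weighted by the $C(T)$, the $a(h)$, and multinomial coefficients. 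For $m\ge 2$ one may first apply Theorem~\ref{theorem:reductionSmall}, which cuts Conjecture~\ref{conjecture1} down to the finitely many $\od$ with $\sum d_i=m-1$ and all $d_i\ge 1$ (so $n\le m-1$); and the same forgetful-map/string-equation bookkeeping can be used to set up an induction relating the statements for consecutive values of $m$, so that Conjectures~\ref{conjecture2} and~\ref{conjecture3} come out in tandem. The case $n=1$ is automatic: in genus~$0$ the trees in question either violate stability or force the psi-monomial into codimension exceeding $\dim\oM_{0,\bullet}$, and the index sets on the $A$-side are empty, so both sides vanish. One could also short-circuit the first step by feeding in the closed formula for $B^m_{g,\od}$ of Section~\ref{section:equivalent formulation} (Theorem~\ref{theorem:arbitrary m reformulation}), which becomes transparent once $\lambda_0=1$ and all DR cycles are trivial.

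The step I expect to be the genuine obstacle is the last one: establishing the resulting tree-sum identity. The difficulty is that the weights $C(T)=\prod_{v}\frac{r(v)}{\sum_{\tv\in\Desc[v]}r(\tv)}$ are \emph{global} on the tree — not a product of local edge factors — so they cannot be matched against the level-by-level alternating structure of $B^m_{0,\od}$ term by term. I would attack it by an Abel-type generating-function manipulation: encode the sum over all stable trees with a fixed distribution of the regular legs as a functional equation in an auxiliary variable, and recognize a geometric-series / inclusion--exclusion collapse across the levels $l_T^{-1}(1),\dots,l_T^{-1}(\deg(T))$; an equivalent route would be a sign-reversing involution on the set of trees that are not ``forced''. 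In either case the crux is to show that the $C(T)$-weights redistribute compatibly with contracting an edge or merging two consecutive levels — this is where I expect the technical heart of Section~\ref{section:genus 0} to lie. Everything upstream of it (the vanishing $\lambda_0=1$ and triviality of $\DR_0$, perfection of the pairing, the string/gluing reductions) is routine.
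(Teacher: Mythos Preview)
Your proposal takes a completely different route from the paper, and the gap you yourself flag---the final tree-sum identity---is exactly the piece you never establish, so as it stands the argument is incomplete.

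The paper's proof is much softer and avoids combinatorics entirely. Rather than pairing against boundary strata and reducing to multinomial identities, it pairs against \emph{CohFT classes}. The point is that the machinery of Section~\ref{section:conjectural relations and fundamental properties} already computes these pairings for free: by~\cite{BPS12} the genus-$0$ part of the DZ hierarchy is polynomial of the correct degree for any CohFT, and Theorem~\ref{theorem:reducing transformation for F-CohFT} then says that $\int_{\oM_{0,n+m}} B^m_{0,\od}\, c_{0,n+m}(\otimes v_j)=0$ for every $m\ge 2$ and every CohFT $c$ (with the analogous statements for $m=0,1$ following from the genus-$0$ agreement of the DR and DZ potentials). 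The only new ingredient is Proposition~\ref{prop61}: classes of the form $c_{0,n}(\otimes v_j)$ linearly span $H^*(\oM_{0,n})$. This is proved in a few lines by applying an $R$-matrix $\exp(r_1 z)$ to the trivial rank-$2$ CohFT and reading off each divisor $\delta_I$ as a $t$-derivative; since the $\delta_I$ generate the ring, one is done by Poincar\'e duality.

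So the comparison is this. Your approach is self-contained and does not invoke the integrable-systems layer, but it trades that dependence for a combinatorial identity involving the global weights $C(T)$ that you correctly identify as nontrivial and do not prove. The paper's approach leans on Section~\ref{section:conjectural relations and fundamental properties} and~\cite{BPS12}, and in return the genus-$0$ case collapses to a short span lemma with an elegant $R$-matrix proof---no tree bijections, no generating functions, no level-merging arguments. If you want to salvage your route, the missing step really is the one you highlighted; but note that proving it directly would amount to re-deriving, combinatorially, the genus-$0$ polynomiality of the DZ hierarchy, which is precisely what~\cite{BPS12} supplies.
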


The proofs will be presented in Sections~\ref{section:proof of main theorem} and~\ref{section:genus 0}, respectively.


\section{An equivalent formulation of the conjectures}\label{section:equivalent formulation}

\subsection{The case $\boldsymbol{m\ge 2}$}

Let $x_1,\dots,x_n$ be formal variables assigned to the legs $\sigma_1,\dots,\sigma_n$. Recall that for $I\subset\br{n}$ we use the notation $x_I=\sum_{i\in I}x_i$.

\begin{definition}
For $g,m\ge 0$ and $n\ge 1$, define the following class in $R^*(\oM_{g,n+m})[x_1,\ldots,x_n]$:
\begin{gather}\label{eq:formula for P}
P_{g,n,m}\coloneqq \prod_{i=1}^n x_i^{-1}\sum_{T\in\SRT_{g,n,m}} (-1)^{|E(T)|} \sum_{p\colon \tH_+^{em}(T)\to \mbZ_{\geq 0}} \xi_{T*}\left(\prod_{h \in \tH_+^{em}(T)} \psi_h^{p(h)}\right) \prod_{h \in\tH_+^{em}(T)} x_{I_h}^{p(h)+1}.
\end{gather}
\end{definition}

Note that given $T\in\SRT_{g,n,m}$ and $p\colon \tH_+^{em}(T)\to \mbZ_{\geq 0}$, we have $\xi_{T*}\left(\prod_{h \in \tH_+^{em}(T)} \psi_h^{p(h)}\right)=0$ unless $\sum_{h\in\tH^{em}_+[v]}p(h)\le 3g(v)-3+n(v)$ for each $v\in V(T)$. This implies that the second sum in the definition of $P_{g,n,m}$ has a finite number of nonzero terms. Clearly, the coefficient of $x_1^{d_1}\cdots x_n^{d_n}$ in $P_{g,n,m}$ is a tautological class from $R^{\sum d_i}(\oM_{g,n+m})$, and it is equal to the sum of $\psi_1^{d_1}\cdots \psi_n^{d_n}$ and a class supported on the boundary of $\oM_{g,n+m}$. 

\begin{example} \label{ex:threevertextree}
For instance, let $T\in\SRT_{g,3,2}$ be as in the following picture, with $g=g_1+g_2+g_3$ (the root vertex is labeled by $g_1$, and hence the legs $\sigma_4$ and $\sigma_5$ are attached to it) and the values of the function $p\colon \tH_+^{em}(T)\to\mbZ_{\geq 0}$ being $p_1,\ldots,p_5$ at the corresponding half-edges in the picture:
\begin{gather*} 
\vcenter{\xymatrix@C=15pt@R=5pt{
& & & & & & \\
& & & *+[o][F-]{{g_2}} \ar@{-}[ru]*{{}_{\,\,\sigma_1}}^<<<{\psi^{p_1}} \ar@{-}[rrd]_<<<{\psi^{p_5}} & & & \\ 
& *+[o][F-]{{g_1}}\ar@{-}[rru]^<<<{\psi^{p_4}}\ar@{-}[rd]*{{}_{\,\sigma_3}}_<<<{\psi^{p_3}}\ar@{-}[ul]*{{}_{\sigma_4\,\,\,}}\ar@{-}[dl]*{{}_{\sigma_5\,\,\,}} & &  & & *+[o][F-]{{g_3}} \ar@{-}[rd]*{{}_{\,\,\sigma_2}}^<<<{\hspace{-0.05cm}\psi^{p_2}} & \\
& & & & & &}}\,.
\end{gather*}
Then the class that this pair $(T,p)$ contributes to $P_{g,3,2}$ is multiplied by the factor
\begin{equation*}
 x_1^{p_1} x_2^{p_2+p_5+1} x_3^{p_3} (x_1+x_2)^{p_4+1}. 
\end{equation*}
\end{example}

\begin{remark}
Since in our notation we have $x_\emptyset =0$, this implies the following condition for a tree $T$ to be able to contribute nontrivially to $P_{g,n,m}$. Namely, for each $h \in \tH_+^{em}(T)$, there must exist at least one leg $\sigma_i$, $i=1,\ldots,n$, which is a descendant of $h$; that is, $I_h\ne\emptyset$. This means that any vertex of $T$ that does not have direct descendants should have at least one leg~$\sigma_i$, $1\le i\le n$, attached to it. 
\end{remark}

\begin{theorem}\label{theorem:equivalent relations for m2}
The following two statements are equivalent:
\begin{enumerate}
\item Conjecture~\ref{conjecture1} is true. 

\item We have $\deg P_{g,n,m}\leq 2g+m-2$ for all $g\geq 0$, $n\geq 1$, and $m\geq 2$. 
\end{enumerate}
\end{theorem}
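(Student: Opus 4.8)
The plan is to reduce the equivalence to a single cohomological identity and then prove that identity by comparing two tree sums term by term. Statement~(2) says that, for all $g\ge 0$, $n\ge 1$ and $m\ge 2$, the coefficient $\Coef_{x_1^{d_1}\cdots x_n^{d_n}}P_{g,n,m}$ vanishes whenever $\sum_i d_i\ge 2g+m-1$, while statement~(1), that is Conjecture~\ref{conjecture1}, says that $B^m_{g,\od}$ vanishes in exactly the same range. Hence the theorem follows from the identity
\begin{gather*}
\Coef_{x_1^{d_1}\cdots x_n^{d_n}}P_{g,n,m}=B^m_{g,\od}\qquad\text{whenever}\quad \textstyle\sum_i d_i\ge 2g+m-1
\end{gather*}
(I expect this to hold for every $\od$, but only the displayed range is needed), which I would establish as follows.

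The first step is to reconcile the signs: in~\eqref{eq:formula for P} a tree $T\in\SRT_{g,n,m}$ is weighted by $(-1)^{|E(T)|}=(-1)^{|V(T)|-1}$, whereas in Definition~\ref{def:Bgdm-MainDefinition} a tree is weighted by $(-1)^{\deg(T)-1}$. These are matched by the combinatorial identity $(-1)^{|V(T)|-1}=\sum_{l}(-1)^{\deg(l)-1}$, the sum being over all level functions $l$ on the rooted tree $T$ (a consequence of Stanley's order-polynomial reciprocity applied to the forest $V(T)\setminus\{\text{root}\}$). I use it to split each $T$-contribution to $P_{g,n,m}$ according to the level function, picking up the sign $(-1)^{\deg(l)-1}$.

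Next, given a pair $(T,l)$ with $\deg(l)=k$, I \emph{stretch} $T$ along $l$: subdivide every edge of $T$ joining vertices at levels $a<b$ by inserting $b-a-1$ genus-$0$, two-valent vertices at the intermediate levels, and attach extra legs to each non-root vertex of the result, obtaining a tree $\tT$ of canonical degree $k$ whose \emph{potentially unstable} vertices are precisely the inserted ones (surjectivity of $l$ onto $\{1,\dots,k\}$ corresponds to completeness condition~(d) of Section~\ref{sec:srt-definitions}). I then forget the extra legs in $\ee_*[\tT,\od]$ vertex by vertex via the string equation~\eqref{eq:Str1}: since no $\psi$-class sits on an extra leg, forgetting the cluster of extra legs at a vertex introduces exactly the multinomial coefficients of~\eqref{eq:Str1}, with the lost $\psi$-degree distributed among the half-edges of $\tH_+^{em}(\tT)$ at that vertex, and the implicit stabilisation contracts the inserted vertices back to $T$. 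Summing over all legal numbers of extra legs, these multinomial coefficients assemble into the generating-function weights $\prod_{h}x_{I_h}^{p(h)+1}$ of~\eqref{eq:formula for P}, because $I_h$ is exactly the set of leg indices reachable through $h$ --- the set of places to which the lost $\psi$-degree can be transported --- and the recursive structure of the sets $I_h$ mirrors the recursion of iterated string-equation pushforwards. Matching the residual $\psi$-powers completes the term-by-term comparison.

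The remaining point is to check that, as $(T,l)$ and the extra-leg data range over everything contributing nontrivially to $\Coef_{x^\od}P_{g,n,m}$ with $\sum d_i\ge 2g+m-1$, the stretched trees $\tT$ are exactly the elements of $\SRT^{(b,c,a)}_{g,n,m;\circ}$, with the sign $(-1)^{\deg(\tT)-1}=(-1)^{\deg(l)-1}$. The completeness conditions come from the construction together with the nonvanishing constraint noted in the Remark above, and the admissibility inequality~\eqref{eq:admissibility-SRT}, $\sum_{l_{\tT}(h)=k}q(h)\le 2g_k(\tT)-2+m$, is precisely the condition under which the relevant pushforward class is not forced to vanish by a dimension count on the part of $\tT$ lying at levels $\le k$, given that $\sum d_i\ge 2g+m-1$; trees violating it contribute $0$ in this range and are dropped, which is exactly the difference between summing over $\SRT_{g,n,m}$ with level functions and over $\SRT^{(b,c,a)}_{g,n,m;\circ}$. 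This verification --- that admissibility is the exact vanishing threshold in the stated degree range --- is the main obstacle; reconciling the multinomial coefficients in the string-equation step is a secondary, purely combinatorial difficulty.
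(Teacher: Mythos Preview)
Your sign identity and the stretching/string-equation comparison are correct; together they establish $\Coef_{x^{\od}}P_{g,n,m}=\tB^m_{g,\od}$, where $\tB^m_{g,\od}$ is the sum~\eqref{eq:equivalent formula for tB} over \emph{all} complete balanced trees with sign $(-1)^{\deg(T)-1}$ (equivalently, the sum~\eqref{eq:definition of tB} over nondegenerate trees with sign $(-1)^{|E(T)|}$). This is exactly Lemma~\ref{lemma:tB and coefficient of P} combined with Step~1 of the proof of Proposition~\ref{prop36}, so this portion of your outline matches the paper.

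The gap is the final step, and it is not a matter of filling in a routine verification: the dimension-count justification you offer is wrong. A complete balanced tree $T$ violating admissibility for the first time at some level $l_0<\deg(T)$ has $\ee_*[T,\od]$ equal to the pushforward, under a gluing map, of a tensor product whose factor supported on levels $\le l_0$ is a summand of $B^m_{g_0,\od'}$ for strictly smaller parameters $(g_0,|I_0|)$ with $\sum_j d'_j>2g_0+m-2$. There is no dimension reason for this factor to vanish: its vanishing is precisely Conjecture~\ref{conjecture1} for those smaller parameters, so if your dimension count worked, the conjecture would be trivial. Consequently the identity $\Coef_{x^{\od}}P_{g,n,m}=B^m_{g,\od}$ (even restricted to $\sum d_i\ge 2g+m-1$) is not what is being proved. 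What the paper proves in Proposition~\ref{prop36} is an equivalence of \emph{systems} of vanishing relations, not of individual classes: assuming Conjecture~\ref{conjecture1}, one groups the non-admissible complete trees in~\eqref{eq:equivalent formula for tB} by their subtree on levels $\le l_0$ and the collection $\{T_h\}$ beyond, so that the sum over the first factor is a $B^m_{g_0,\od'}$, hence zero; conversely, assuming all $\tB^m=0$ in the range, one runs an induction on $2g-2+n+m$, splitting the complete-tree sum into the part with $l_0<\deg(T)$ (killed by the induction hypothesis, since then $2g_0-2+|I_0|+m<2g-2+n+m$) and the part with $l_0=\deg(T)$, which is exactly $B^m_{g,\od}$. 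Replace your dimension-count sentence with this two-sided inductive argument and the proof goes through.
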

\begin{proof}
Let us introduce an auxiliary cohomology class on $\oM_{g,n+m}$. We will say that a tree $T\in\SRT^{(b)}_{g,n,m;\circ}$ is \emph{nondegenerate} if it does not have potentially unstable vertices. The set of such trees will be denoted by $\SRT^{(b,nd)}_{g,n,m;\circ}\subset\SRT^{(b)}_{g,n,m;\circ}$. The set $\SRT^{(b,nd)}_{g,n,m;\circ}$ is infinite, except for a finite number of triples $(g,n,m)$. However, we have the following statement.

\begin{lemma}\label{lem35}
Given $\od\in\mbZ_{\ge 0}^n$, the set of trees $T\in\SRT^{(b,nd)}_{g,n,m;\circ}$ such that 
$
[T,\od]\ne 0\in R^{\sum d_i+|F(T)|}(\oM_{g,n+m+|F(T)|})
$
is finite.
\end{lemma}

\begin{proof}
For $T\in\SRT^{(b,nd)}_{g,n,m;\circ}$, denote by $\pi(T)\in\SRT_{g,n,m}$ the stable tree obtained from $T$ by forgetting all of the extra legs in~$T$. Clearly, we can identify $V(T)=V(\pi(T))$. It is sufficient to prove that for any $\tT\in\SRT_{g,n,m}$, there exists a constant $N_{\tT}$ such that if $\pi(T)=\tT$ and $[T,\od]\ne 0$ for $T\in\SRT^{(b,nd)}_{g,n,m;\circ}$, then $|F(T)|\le N_{\tT}$. Let us prove it by  induction on $|V(\tT)|$. If $|V(\tT)|=1$, then we can obviously set $N_{\tT}:=0$. Suppose  $|V(\tT)|\ge 2$. Let us choose a vertex $v\in V(\tT)$ that does not have direct descendants. Denote by $w$ the mother of~$v$ and by~$\tT'$ the stable tree obtained from $\tT$ by deleting the vertex $v$ together with all of the half-edges incident to it. Let us show that we can set
$$
N_{\tT}:=2 N_{\tT'}+3g(w)-2+n+m+|E(\tT)|.
$$
Indeed, denote by $h\in H^e_+[w]$ the half-edge such that $v(\iota(h))=v$ and by $T'$ the stable tree obtained from $T$ by deleting the vertex $v$ together with all of the half-edges incident to it. If $[T,\od]\ne 0$, then clearly 
$$
q(h)\le 3g(w)-3+n(w)\le 3g(w)-3+|E(\tT)|+n+m+|F(T')|.
$$
Since $q(h)=|F[v]|-1$, using the induction assumption, we see that $|F(T)|=|F(T')|+|F[v]|\le 2 N_{\tT'}+3g(w)-2+n+m+|E(\tT)|$, as required.
\end{proof}

For $g,m\ge 0$, $n\ge 1$, and $d_1,\ldots,d_n\ge 0$, let us consider the following cohomology class:
\begin{gather}\label{eq:definition of tB}
\tB^m_{g,\od}:=\sum_{T\in\SRT^{(b,nd)}_{g,n,m;\circ}}(-1)^{|E(T)|}\ee_*[T,\od]\in R^{\sum d_i}(\oM_{g,n+m}).
\end{gather}

Lemma~\ref{lem35} implies that this class is well defined.

\begin{proposition}\label{prop36}
Conjecture~\ref{conjecture1} is true if and only if $\tB^m_{g,\od}=0$ for all $g\ge 0$, $n\ge 1$, $m\ge 2$, and $d_1,\ldots,d_n\ge 0$ such that $\sum d_i>2g-2+m$.
\end{proposition}
\begin{proof}
The proof consists of two steps.

\emph{Step}~1.~ Let us prove that
\begin{gather}\label{eq:equivalent formula for tB}
\tB^m_{g,\od}=\sum_{T\in\SRT^{(b,c)}_{g,n,m;\circ}}(-1)^{\deg(T)-1}\ee_*[T,\od]\in R^{\sum d_i}(\oM_{g,n+m}),\quad\od\in\mbZ_{\ge 0}^n.
\end{gather}
Consider a tree $T\in\SRT^{(b)}_{g,n,m;\circ}$. It is clear that any potentially unstable vertex~$v$ in $T$ does not coincide with the root and has genus $0$, and there is exactly one half-edge $h_1$ from $\tH^{em}_+(T)$ attached to it. Moreover, there is exactly one half-edge $h_2$ from $H^e_-(T)$ attached to it, and $\ee_*[T,\od]=0$ unless $q(h_1)=q(\iota(h_2))$. Let us denote by $\tSRT^{(b)}_{g,n,m;\circ}$ the set of trees $T\in\SRT^{(b)}_{g,n,m;\circ}$ such that the condition 
\begin{gather}\label{eq:condition for tSRT}
q(h_1)=q(\iota(h_2))
\end{gather} 
is satisfied for any potentially unstable vertex. Note that $h_1$ in  condition~\eqref{eq:condition for tSRT} can be a leg~$\sigma_i$, $1\le i\le n$, so the set $\tSRT^{(b)}_{g,n,m;\circ}$ depends on the choice of $\od$.

Let us fix $\od$. Consider a tree $T\in\tSRT^{(b)}_{g,n,m;\circ}$ and a potentially unstable vertex $v\in V(T)$. Let us throw away the vertex $v$ together with all of the half-edges attached to it except $h_1$ and identify the half-edges $h_1$ and $\iota(h_2)$. We obtain a tree from $\tSRT^{(b)}_{g,n,m;\circ}$ with one less potentially unstable vertex. Repeating this procedure until there are no potentially unstable vertices, we obtain a tree from~$\SRT^{(b,nd)}_{g,n,m;\circ}$, which we denote by $\Core(T)$. We obviously have
$$
\ee_*[T,\od]=\ee_*[\Core(T),\od].
$$
Therefore, Equation~\eqref{eq:equivalent formula for tB} is a corollary of the following lemma.

\begin{lemma}
For any $\od\in\mbZ^n_{\ge 0}$ and $T\in\SRT^{(b,nd)}_{g,n,m;\circ}$, we have $\sum_{\substack{\tT\in\tSRT^{(b,c)}_{g,n,m;\circ}\\\Core(\tT)=T}}(-1)^{\deg(\tT)-1}=(-1)^{|E(T)|}$.
\end{lemma}
\begin{proof}
The proof is by the induction on~$|V(T)|$. The base case $|V(T)|=1$ is obvious.

Suppose  $|V(T)|>1$. Since $T=\Core(\tT)$ is obtained from $\tT$ by deleting some vertices, we can consider the set $V(T)$ as a subset of $V(\tT)$. Denote by $V^l(T)$ the set of vertices $v\in V(T)$ that do not have direct descendants. Consider a nonempty subset $I\subset V^l(T)$, and let 
$$
n_I:=|\{1\le i\le n\;|\;\text{$\sigma_i$ is attached to a vertex from $I$}\}|,\quad g_I:=\sum_{v\in I}g(v).
$$
Denote by $T'$ the tree obtained from $T$ by deleting all of the vertices from $I$ together with all of the half-edges attached to them, where we consider the half-edges $\iota(h)$, for $h\in H^e_-(T)$ attached to the vertices from $I$, as regular legs of the tree $T'$. Therefore, $T'\in\SRT^{(b,nd)}_{g-g_I,n-n_I+|I|,m;\circ}$. Denote by $S$ the set of trees $\tT\in\tSRT^{(b,c)}_{g,n,m;\circ}$ such that 
$$
\left\{\left.v\in V(\tT)\;\right|\;\text{$l_T(v)=\deg(\tT)$ and $v$ is not potentially unstable}\right\}=I.
$$
Deleting all of the vertices $v$ with $l_T(v)=\deg(T)$ together with all of the half-edges attached to them gives a bijection from the set $S$ to the set $\left\{\left.\tT'\in\tSRT^{(b,c)}_{g-g_I,n-n_I+|I|,m;\circ}\;\right|\;\Core(\tT')=T'\right\}$. Using the induction assumption, we obtain
$$
\sum_{\substack{\tT\in\tSRT^{(b,c)}_{g,n,m;\circ}\\\Core(\tT)=T}}\hspace{-0.2cm}(-1)^{\deg(\tT)-1}=\hspace{-0.1cm}\sum_{\substack{I\subset V^l(T)\\I\ne\emptyset}}\hspace{-0.1cm}(-1)^{|E(T)|-|I|+1}=(-1)^{|E(T)|+1}\sum_{k=1}^{|V^l(T)|}(-1)^k{|V^l(T)|\choose k}=(-1)^{|E(T)|},
$$
as required.
\end{proof}

\emph{Step}~2.~ Let us now prove the proposition using Equation~\eqref{eq:equivalent formula for tB}. For a tree $T\in\SRT^{(b,c)}_{g,n,m;\circ}$ and an $n$-tuple $\od$ satisfying $\sum d_i>2g-2+m$, denote by $l_0$ the minimal level such that $\sum_{\substack{h \in\tH_+^{em}(T) \\ l_T(h)=l_0}} q(h) > 2 g_{l_0}(T)-2+m$. Let us also introduce the following notation:
\begin{gather*}
I_0:=\left\{\left.h \in \tH_+^{em}(T)\;\right|\;l_T(h)=l_0\right\},\quad V_0:=\left\{\left.v\in V(T)\;\right|\;l_T(v)\le l_0\right\},\quad g_0:=g_{l_0}(T),
\end{gather*}
and for $h\in I_0$, let
\begin{gather*}
V_h:=\{v\in V(T)\;|\;\text{$v\in\Desc[v(\iota(h))]$}\},\quad g_h:=\sum_{v\in V_h}g(v).
\end{gather*}
Denote by $T_0$ the subtree of $T$ formed by the vertices from $V_0$, and denote by $T_h$ the subtree of~$T$ formed by the vertices from $V_h$, $h\in I_0$. We see that $T_0\in\SRT^{(b,c,a)}_{g_0,|I_0|,m;\circ}$ and $T_h\in\SRT_{g_h,|I_h|,1;\circ}$, where we consider $\iota(h)$ as a unique frozen leg in $T_h$. 

Suppose that Conjecture~\ref{conjecture1} is true. In the sum over $T\in\SRT^{(b,c)}_{g,n,m;\circ}$ on the right-hand side of~\eqref{eq:equivalent formula for tB}, we can collect together the terms with fixed $g_0$, $|I_0|$, and trees $T_h$. Denote it by $S$, and let $\od'$ be an $|I_0|$-tuple consisting of the numbers $q(h)$, $h\in I_0$. There is a gluing map from a product of moduli space, determined by the decomposition of the tree $T$ into the trees $T_0$ and $T_h$, $h\in I_0$, to $\oM_{g,n+m}$, and it is easy to see that the class $S$ is equal to the pushforward under this gluing map of the tensor product of the class $B^m_{g_0,\od'}$ and the classes determined by the trees $T_h$.  Since we assumed that Conjecture~\ref{conjecture1} is true, we conclude that $S=0$, and thus also $\tB^m_{g,\od}=0$.

Now suppose  $\tB^m_{g,\od}=0$ for all $g\ge 0$, $n\ge 1$, $m\ge 2$, and $d_1,\ldots,d_n\ge 0$ such that $\sum d_i>2g-2+m$. Let us prove that $B^m_{g,\od}=0$ in the same range of $g,n,m,\od$ by  induction on $2g-2+n+m$. The base case $2g-2+n+m=m-1$, which is achieved only for $g=0$ and $n=1$, is trivial because we obviously have $B^m_{0,d_1}=\tB^m_{0,d_1}=\psi_1^{d_1}$, which is zero because $d_1>m-2=\dim\oM_{0,m+1}$.  

Now suppose $2g-2+n+m>m-1$. We split the sum on the right-hand side of~\eqref{eq:equivalent formula for tB} as follows:
\begin{gather}\label{eq:split tB}
\tB^m_{g,\od}:=\sum_{\substack{T\in\SRT^{(b,c)}_{g,n,m;\circ}\\l_0<\deg(T)}}(-1)^{\deg(T)-1}\ee_*[T,\od]+\sum_{\substack{T\in\SRT^{(b,c)}_{g,n,m;\circ}\\l_0=\deg(T)}}(-1)^{\deg(T)-1}\ee_*[T,\od].
\end{gather}
For a tree $T\in\SRT^{(b,c)}_{g,n,m;\circ}$, it is clear that if $l_0<\deg(T)$, then $2g_0-2+|I_0|+m<2g-2+n+m$; this follows from the fact that for each $1\le\l\le\deg(T)$, there is at least one vertex $v$ with $l_T(v)=l$ that is not potentially unstable. Therefore, by the induction assumption, the first sum on the right-hand side of~\eqref{eq:split tB} is equal to zero. The second sum on the right-hand side of~\eqref{eq:split tB} is obviously equal to $B^m_{g,\od}$, and thus it is equal to zero, as required.
\end{proof}

Let us now continue the proof of Theorem~\ref{theorem:equivalent relations for m2}. Using Proposition~\ref{prop36}, we see that it is sufficient to prove the following lemma.

\begin{lemma}\label{lemma:tB and coefficient of P}
For any $n\ge 1$, $g,m\ge 0$, and $\od=(d_1,\ldots,d_n)\in\mbZ_{\ge 0}^n$, we have $\tB^m_{g,\od}=\Coef_{x_1^{d_1}\cdots x_n^{d_n}}P_{g,n,m}$.
\end{lemma}
\begin{proof}
Consider an arbitrary tree $T\in\SRT_{g,n,m}$ and a function $q\colon\tH^{em}_+(T)\to\mbZ_{\ge 0}$. For any $h\in H^e_+(T)$, let us attach $q(h)+1$ extra legs to the vertex $v(\iota(h))$. We obtain a tree from $\SRT^{(b,nd)}_{g,n,m;\circ}$, which we denote by $T_q$. The definition of the class $\tB^m_{g,\od}$ can be rewritten in the following way:
$$
\tB^m_{g,\od}=\sum_{T\in\SRT_{g,n,m}}(-1)^{|E(T)|}\sum_{\substack{q\colon\tH^{em}_+(T)\to\mbZ_{\ge 0}\\q(\sigma_i)=d_i}}\ee_*\xi_{T_q*}\left(\prod_{h\in\tH^{em}_+(T_q)}\psi_h^{q(h)}\right).
$$
Therefore, we have to prove that
$$
\sum_{T\in\SRT_{g,n,m}}(-1)^{|E(T)|}\sum_{q\colon\tH^{em}_+(T)\to\mbZ_{\ge 0}}\ee_*\xi_{T_q*}\left(\prod_{h\in\tH^{em}_+(T_q)}\psi_h^{q(h)}\right)\prod_{i=1}^n x_i^{q(\sigma_i)}=P_{g,n,m}.
$$
Clearly, this follows from the equation
\begin{multline}\label{eq:with and without extra legs}
\sum_{q\colon\tH^{em}_+(T)\to\mbZ_{\ge 0}}\ee_*\xi_{T_q*}\left(\prod_{h\in\tH^{em}_+(T_q)}\psi_h^{q(h)}\right)\prod_{i=1}^n x_i^{q(\sigma_i)}\\
=\sum_{p\colon \tH_+^{em}(T)\to \mbZ_{\geq 0}} \xi_{T*}\left(\prod_{h \in \tH_+^{em}(T)} \psi_h^{p(h)}\right) \prod_{h \in H_+^e(T)} x_{I_h}^{p(h)+1}\prod_{i=1}^n x_i^{p(\sigma_i)},\quad T\in\SRT_{g,n,m},
\end{multline}
which we are going to prove by  induction on $|V(T)|$.

The base case $|V(T)|=1$ is obvious. We proceed to the induction step and assume  $|V(T)|\ge 2$. Choose a vertex $v\in V(T)$ that does not have direct descendants. Denote by~$\th$ a unique half-edge from $H^e_-(T)$ incident to $v$, and let $h':=\iota(\th)$. Denote by $T'$ the tree obtained from $T$ by deleting the vertex $v$ together with all of the half-edges incident to it. We have $T'\in\SRT_{g-g(v),n-|I_{h'}|+1,m}$, and we have a natural inclusion $\tH^{em}_+(T')\subset\tH^{em}_+(T)$.

For a function $q\colon\tH^{em}_+(T)\to\mbZ_{\ge 0}$, set $q':=q|_{\tH^{em}_+(T')}$. We can express the cohomology class in a summand on the left-hand side of~\eqref{eq:with and without extra legs} as follows:
$$
\ee_*\xi_{T_q*}\left(\prod_{h\in\tH^{em}_+(T_q)}\psi_h^{q(h)}\right)=\gl_*\left(\ee_*\xi_{T'_{q'}*}\left(\prod_{h\in\tH^{em}_+(T'_{q'})}\psi_h^{q'(h)}\right)\otimes \pi_*\left(\prod_{j\in I_{h'}}\psi_j^{q(\sigma_j)}\right)\right),
$$
where $\gl\colon \oM_{g-g(v),n-|I_{h'}|+1+m}\times\oM_{g(v),|I_{h'}|+1}\to\oM_{g,n+m}$ is the map given by gluing the marked points corresponding to the half-edges $\th$ and $h'$, and $\pi\colon \oM_{g(v),|I_{h'}|+2+q'(h')}\to\oM_{g(v),|I_{h'}|+1}$ is the map forgetting the $q'(h')+1$ marked points corresponding to the extra legs attached to $v\in V(T_q)$. Therefore, the left-hand side of~\eqref{eq:with and without extra legs} is equal to 
\begin{gather*}
\sum_{q'\colon\tH^{em}_+(T')\to\mbZ_{\ge 0}}\gl_*\left(\ee_*\xi_{T'_{q'}*}\left(\prod_{h\in\tH^{em}_+(T'_{q'})}\psi_h^{q'(h)}\right)\prod_{i\in\br{n}\backslash I_{h'}} x_i^{q'(\sigma_i)}\otimes\sum_{l\colon I_{h'}\to\mbZ_{\ge 0}}\pi_*\left(\prod_{j\in I_{h'}}(\psi_j x_j)^{l(j)}\right)\right).
\end{gather*}
From the string equation~\eqref{eq:Str1}, it follows that
$$
\sum_{l\colon I_{h'}\to\mbZ_{\ge 0}}\pi_*\left(\prod_{j\in I_{h'}}(\psi_j x_j)^{l(j)}\right)=x_{I_{h'}}^{q'(h')+1}\sum_{l\colon I_{h'}\to\mbZ_{\ge 0}}\prod_{j\in I_{h'}}(\psi_j x_j)^{l(j)},
$$
which allows us to rewrite the previous expression as
$$
\gl_*\left(\sum_{q'\colon\tH^{em}_+(T')\to\mbZ_{\ge 0}}\ee_*\xi_{T'_{q'}*}\left(\prod_{h\in\tH^{em}_+(T'_{q'})}\psi_h^{q'(h)}\right)x_{I_{h'}}^{q'(h')+1}\prod_{i\in\br{n}\backslash I_{h'}} x_i^{q'(\sigma_i)}\otimes\sum_{l\colon I_{h'}\to\mbZ_{\ge 0}}\prod_{j\in I_{h'}}(\psi_j x_j)^{l(j)}\right).
$$
Applying the induction assumption to the first factor in the tensor product, we obtain exactly the expression on the right-hand side of~\eqref{eq:with and without extra legs}, as required.
\end{proof}

This concludes the proof of Theorem~\ref{theorem:equivalent relations for m2}.\end{proof}

\subsection{Another reformulation for any $\boldsymbol{m\ge 0}$}

In the case $m\le 1$, an analog of Theorem~\ref{theorem:equivalent relations for m2} becomes more subtle. Namely, we no longer can combine conjectural relations and their corollaries as we did in the proof of Theorem~\ref{theorem:equivalent relations for m2} since the conjectural identities given in Conjectures~\ref{conjecture2} and~\ref{conjecture3} for $B^1_{g,\od}$ and $B^0_{g,\od}$, respectively, have nontrivial right-hand sides. However, there exists some simplification of the expression for $B^m_{g,\od}$ along the same lines that holds for any~$m$.

Consider a tree $T\in \SRT_{g,n,m}$ and a function $p\colon \tH^{em}_+(T)\to\ZZ_{\ge 0}$. Let $[T,p]$ denote the class
\begin{align*}
[T,p]\coloneqq \xi_{T*} \left(\prod_{h \in \tH_+^{em}(T)} \psi_h^{p(h)}\right) \in R^{|E(T)|+\sum_{h\in \tH_+^{em}(T)} p(h) }(\oM_{g,n+m}).
\end{align*}

Let us also define two coefficients. One coefficient $C_{\lvl}(T,p)$ is a weighted count of possible level structures and depends only on the structure of the tree $T$ and the function $p$, and another coefficient $C_{\str}(T,p,\od)$ is a combinatorial coefficient that also takes into account a given vector~$\od$ (the subscript ``$\str$'' refers to the fact that it reflects the combinatorics of the string equation). 

A level function $l\colon V(T)\to\mbZ_{\ge 1}$ is called \emph{$p$-admissible} if for every $1\le i< \deg(l)$, we have the following inequality:
\begin{gather*}
\sum_{\substack{h\in \tH_+^{em}(T)\\l(h)\le i}} p(h) + \left|\left\{\left.h \in H_+^e(T)\;\right|\;l(h) < i\right\}\right| \le 2\sum_{\substack{v\in V(T)\\l(v)\leq i}} g(v)-2+m
\end{gather*}
(informally, the left-hand side of this expression is just the degree of the class defined by the stable tree obtained from $T$ by cutting at the level $i$). Let $\mathcal{L}(T,p)$ denote the set of all $p$-admissible level functions on~$T$. Define 
\begin{align*}
C_{\lvl}(T,p)\coloneqq \sum_{l\in\mathcal{L}(T,p)}(-1)^{\deg(l)-1}. 	
\end{align*}

For each $h\in\tH_+^{em}(T)$, let $H_h \subset \tH_+^{em}(T)$ be the subset of all half-edges that are descendants of~$h$ excluding $h$ itself. The combinatorial coefficient $C_{\str}(T,p,\od)$ is set to zero (or, alternatively, just undefined) unless $|E(T)|+\sum_{h\in \tH_+^{em}(T)} p(h)=d_{\llbracket n\rrbracket}$. If that equality holds, then  
\begin{align*}
C_{\str}(T,p,\od) \coloneqq \frac{1}{\prod_{i=1}^n (d_i+1)!} \prod_{h\in \tH_+^{em}(T)} \left(\sum_{i\in I_h}(d_i+1)  - \sum_{h'\in H_h}(p(h')+1)\right)_{(p(h) +1)},
\end{align*}
where we recall that $(a)_{(b)}=a(a-1)\cdots(a-b+1)$ is the Pochhammer symbol. Note that from this definition, $C_{\str}(T,p,\od)$ is equal to zero unless $I_h$ is nonempty for every $h \in \tH_+^{em}(T)$ (or, equivalently, unless each vertex that does not have direct descendants has at least one regular leg attached to it). 

\begin{example} 
Recall Example~\ref{ex:threevertextree}. In this case, the corresponding coefficients are $C_{\lvl}(T,p)=\delta_{p_3+p_4\le 2g_1}\delta_{p_1+p_3+p_4+p_5+1\le 2(g_1+g_2)}$ (here by $\delta$-symbols we denote the functions that take value $1$ once the inequality in the subscript is satisfied and $0$ otherwise). Note that there is at most one admissible level function $l\in \mathcal{L}(T,p)$, which assigns to a vertex of genus $g_i$ the value $i$. The coefficient $C_{\str}(T,p,\od)$ is equal to
\begin{align*}
\frac{(d_2-p_2)_{(p_5+1)}(d_1+d_2-p_1-p_2-p_5-1)_{(p_4+1)}\prod_{i=1}^3(d_i+1)_{(p_i+1)}}{\prod_{i=1}^3 (d_i+1)!}.
\end{align*}
\end{example}

A simplified formula for $B^m_{g,\od}$ is given by the following theorem.

\begin{theorem}\label{theorem:arbitrary m reformulation}
We have
\begin{align*}
B^m_{g,\od} = \sum_{T\in\SRT_{g,n,m}}\sum_{\substack{p\colon \tH_+^{em}(T)\to \ZZ_{\ge 0}\\|E(T)|+\sum_{h\in \tH_+^{em}(T)} p(h)=d_{\llbracket n\rrbracket}}} C_{\lvl}(T,p) C_{\str}(T,p,\od) \cdot [T,p].
\end{align*}
\end{theorem}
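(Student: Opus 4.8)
The plan is to start from Definition~\ref{def:Bgdm-MainDefinition}, rewrite the sum over $\SRT^{(b,c,a)}_{g,n,m;\circ}$ in terms of trees $T\in\SRT_{g,n,m}$ decorated by a function $p\colon\tH^{em}_+(T)\to\mbZ_{\ge 0}$, and then extract two independent combinatorial weights: one recording the choice of level structure (which will give $C_{\lvl}$) and one recording how the extra legs are distributed and then forgotten via the string equation (which will give $C_{\str}$). Concretely, every balanced complete admissible tree $\widehat T\in\SRT^{(b,c,a)}_{g,n,m;\circ}$ has an underlying stable tree $T=\pi(\widehat T)\in\SRT_{g,n,m}$ obtained by forgetting the extra legs, together with the function $q$ recording $q(h)+1$ extra legs at $v(\iota(h))$; setting $p:=q$ on $\tH^{em}_+(T)$, the degree constraint $|E(T)|+\sum p(h)=d_{\llbracket n\rrbracket}$ is exactly the requirement that $[\widehat T,\od]$ lies in the correct cohomological degree. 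So I would first establish the decomposition
\begin{align*}
B^m_{g,\od}=\sum_{T\in\SRT_{g,n,m}}\ \sum_{\substack{p\colon\tH^{em}_+(T)\to\mbZ_{\ge 0}\\|E(T)|+\sum p(h)=d_{\llbracket n\rrbracket}}}\Bigg(\sum_{l\in\mathcal L(T,p)}(-1)^{\deg(l)-1}\Bigg)\cdot\ee_*\big[\widetilde T_{p,l},\od\big],
\end{align*}
where $\widetilde T_{p,l}$ is the tree $T$ equipped with the extra legs prescribed by $p$ and the level structure $l$, and where the sign $(-1)^{\deg(\widehat T)-1}=(-1)^{\deg(l)-1}$ only depends on $l$. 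The point is that fixing $(T,p)$ and varying $l$ over all $p$-admissible level functions reproduces precisely the set of $\widehat T$ with $\pi(\widehat T)=T$ and prescribed $q$: conditions (a)--(d) of completeness, condition (a)--(b) of being balanced together with the definition of $q$, and the admissibility inequality~\eqref{eq:admissibility-SRT} translate term by term into the inequality defining $p$-admissibility (the $|\{h:l(h)<i\}|$ term coming from the potentially unstable vertices created at intermediate levels, which contribute extra half-edges to the cut graph). This is essentially the same core/level bookkeeping already carried out in the proof of Proposition~\ref{prop36} and Lemma~\ref{lemma:tB and coefficient of P}, so I would lean on that machinery.

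Next I would compute $\ee_*[\widetilde T_{p,l},\od]$, which no longer depends on $l$, and show it equals $C_{\str}(T,p,\od)\cdot[T,p]$. For this I would forget the extra legs vertex by vertex, proceeding up the tree from the leaves toward the root exactly as in the induction of Lemma~\ref{lemma:tB and coefficient of P}: at a vertex $v$ with outgoing half-edges and a single incoming half-edge $h$, forgetting the $q(h)+1$ extra legs at $v$ via the string equation~\eqref{eq:Str1} multiplies the psi-monomial and produces a polynomial-coefficient factor. Iterating the string equation~\eqref{eq:Str1} over the whole tree, the accumulated coefficient is a product over $h\in\tH^{em}_+(T)$ of falling factorials whose arguments record the total ``weight'' $\sum_{i\in I_h}(d_i+1)$ flowing past $h$ minus the weights $\sum_{h'\in H_h}(p(h')+1)$ already consumed at descendant half-edges, with $(p(h)+1)$ factors; dividing by $\prod_i(d_i+1)!$ accounts for the legs $\sigma_1,\dots,\sigma_n$ themselves being treated on the same footing. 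This is exactly the definition of $C_{\str}(T,p,\od)$. I would make this precise by an induction on $|V(T)|$ paralleling Lemma~\ref{lemma:tB and coefficient of P}, carrying along the partially-forgotten class together with its polynomial coefficient; the base case $|V(T)|=1$ is a single application of~\eqref{eq:Str1}, and the inductive step peels off a leaf vertex exactly as there.

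Combining the two computations gives the claimed formula, since $C_{\lvl}(T,p)=\sum_{l\in\mathcal L(T,p)}(-1)^{\deg(l)-1}$ by definition and the vanishing of terms with $I_h=\emptyset$ is automatic (then $C_{\str}=0$). The main obstacle I anticipate is the careful matching of the combinatorial conditions: verifying that the inequalities defining a $p$-admissible level function are genuinely equivalent—after core-completion—to the conjunction of completeness conditions (a)--(d) and admissibility~\eqref{eq:admissibility-SRT} for the blown-up tree $\widehat T$, in particular getting the extra term $|\{h\in H^e_+(T):l(h)<i\}|$ right (it comes from the potentially unstable vertices inserted along edges that cross level $i$, each contributing one extra node to the degree of the cut class). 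A secondary subtlety is bookkeeping the exact arguments of the Pochhammer symbols when the string equation is applied repeatedly and the ``weights'' $d_i+1$ versus $p(h)+1$ propagate through nested forgetful maps; but this is a direct, if slightly intricate, generalization of the single-vertex and two-vertex computations already done in Lemma~\ref{lemma:tB and coefficient of P} and illustrated in Example~\ref{ex:threevertextree}, so no new idea should be required.
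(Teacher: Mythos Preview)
There is a genuine gap. You conflate the input decoration $q$ on the balanced tree $\widehat T$ (where $q(\sigma_i)=d_i$ by definition) with the function $p$ appearing in the theorem, which is the \emph{output} of applying the string equation vertex by vertex. With your convention $p:=q$ on $\tH^{em}_+(T)$, the degree constraint $|E(T)|+\sum_h p(h)=d_{\llbracket n\rrbracket}$ reduces to $|E(T)|+\sum_{h\in H^e_+(T)}q(h)=0$ and is almost never satisfied; in the theorem, $p(\sigma_i)$ is a free parameter summed over, generally different from $d_i$ (look again at Example~\ref{ex:threevertextree}, where $p_1,p_2,p_3$ on the regular legs are unrelated to $d_1,d_2,d_3$). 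More seriously, your central claim $\ee_*[\widetilde T_{p,l},\od]=C_{\str}(T,p,\od)\cdot[T,p]$ is false: forgetting the extra legs of a fixed balanced tree via iterated applications of~\eqref{eq:Str1} produces a \emph{linear combination} of classes $[T,p']$ over many output decorations $p'$, and $C_{\str}(T,p',\od)$ is precisely the coefficient of $[T,p']$ in that sum, not a scalar multiplying a single term.

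The correct organization is the reverse of what you propose. One first applies the string equation to each $\widehat T\in\SRT^{(b,c,a)}_{g,n,m;\circ}$---using a formal convention that at a potentially unstable vertex the pushforward yields $\psi^{-1}$ on $\oM_{0,2}$---to obtain a linear combination of decorated rooted trees $(T',p)$ that may carry unstable genus-$0$ valence-$2$ vertices with $p=-1$; only \emph{then} does one contract those unstable vertices to reach $(T'',p)$ with $T''\in\SRT_{g,n,m}$, together with a level function $l\in\mathcal L(T'',p)$ inherited from the canonical level function of $T'$. The admissibility inequality~\eqref{eq:admissibility-SRT} on $\widehat T$ translates to $p$-admissibility for the \emph{output} $p$, not for the input $q$: after the string reduction, the left-hand side of the $p$-admissibility inequality is the cohomological degree of the class obtained by cutting $T''$ at level $i$, which is how the extra term $|\{h\in H^e_+(T):l(h)<i\}|$ arises. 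Your core/level bookkeeping is indeed part of the argument, but it enters \emph{after} the string equation has been applied, not before; attempting to set up the bijection at the level of the input data $q$ does not reproduce $\mathcal L(T,p)$.
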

\begin{proof} 
The proof follows essentially the same ideas as some steps of the proof of Theorem~\ref{theorem:equivalent relations for m2}. However, we arrange it a bit differently to stress the origin of the coefficients $C_{\lvl}(T,p)$ and $C_{\str}(T,p,\od)$. In a nutshell, we just carefully describe in steps the pushfoward $\ee_*$ in the definition of the class $B^m_{g,\od}$.

Recall Definition~\ref{def:Bgdm-MainDefinition}. Our goal is to explicitly compute $\ee_*[T,\od]$ for any $T\in\SRT^{(b,c,a)}_{g,n,m;\circ}$. We perform the pushforward in two steps. In the first step, we formally apply the string equation given by Equation~\eqref{eq:Str1}  to the vertices of the graph $T$. To this end, in order to efficiently treat the case of potentially unstable vertices, we introduce an extra convention that extends the string equation.
In the string equation~\eqref{eq:Str1}, one has to assume  $2g-2+k>0$ and $p_i\geq 0$, $i=1,\ldots,k$. It is convenient to formally extend the range of applications of the string equation. We consider the case $g=0$, $k=2$, and $q\geq 0$, and we formally set
\begin{gather}\label{eq:Str2}
	\pi_*(\psi_1^q|_{\oM_{0,q+3}})=\psi_1^{-1}|_{\oM_{0,2}}
\end{gather}
for the map $\pi\colon\oM_{0,q+3}\to\oM_{0,2}$ that forgets the last $q+1$ marked points. This map has to be understood formally: we are going to apply $\ee_*$ to the graphs in $\SRT^{(b,c,a)}_{g,n,m;\circ}$, and Equation~\eqref{eq:Str2} just means that at an intermediate step of the computation, we will use unstable vertices of genus $0$ with two incident half-edges (these vertices will disappear in the final formula for the pushforward). To make this precise, let us introduce auxiliary definitions. 

We consider \emph{rooted trees} in $\RT_{g,n,m}$, which is an extension of $\SRT_{g,n,m}$ where we allow trees to have unstable vertices of type $(0,2)$; that is, we allow vertices $v$ of genus $g(v)=0$ with just two incident half-edges, one in $\tH_+^{em}(T)$ and one in the direction of the mother of $v$. The root cannot be unstable, however. Denote by $\DRT_{g,n,m}$ the set of pairs $(T,p)$, where $T\in\RT_{g,n,m}$ and $p\colon \tH_+^{em}(T) \to \ZZ_{\ge -1}$ is a function such that $p^{-1}(-1)$ is exactly the subset of $\tH_+^{em}(T)$ attached to unstable vertices. Let us call such pairs \emph{decorated rooted trees}.

In the same way as for stable rooted trees, for $T\in\RT_{g,n,m}$, we consider level functions $l\colon V(T)\to \ZZ_{\ge 1}$ and the canonical level function $l_T\colon V(T)\to\mbZ_{\ge 1}$. We say that a rooted tree in~$\RT_{g,n,m}$ is \emph{complete} if the canonical level function satisfies exactly the same conditions as the canonical level function of a complete balanced stable rooted tree in $\SRT_{g,n,m;\circ}^{(b,c)}$ (\textit{cf.}~Section~\ref{sec:srt-definitions}) with the condition ``\emph{for every $1\le l\le\deg(T)$, the set of vertices $l_T^{-1}(l)$ contains at least one vertex that is not potentially unstable}'' replaced by ``\emph{for every $1\le l\le\deg(T)$, the set of vertices $l_T^{-1}(l)$ contains at least one vertex that is not unstable}.'' Let $\RT^{(c)}_{g,n,m}$ denote the subset of complete rooted trees. Denote by $\DRT^{(c)}_{g,n,m}\subset\DRT_{g,n,m}$ the subset of pairs $(T,p)$ where $T\in\RT^{(c)}_{g,n,m}$. Let $\DRT^{(c,a)}_{g,n,m}\subset \DRT^{(c)}_{g,n,m}$ be the set of so-called \emph{admissible} complete decorated rooted trees $(T,p)$ satisfying the additional system of inequalities
\begin{align}\label{eq:RT-admissible}
\sum_{\substack{h\in H^e_+(T)\\ l_T(h)\le k}} p(h) + \left|\left\{\left.h\in H^e_+(T)\;\right|\; l_T(h)<k\right\}\right| \le 2 g_k(T)-2+m,\quad\text{for any } 1\le k<\deg(T).
\end{align}

We can now proceed to the description of the first step in our computation of the pushforward $\ee_*[T,\od]$, $T\in\SRT^{(b,c,a)}_{g,n,m;\circ}$. We construct a map $f_1\colon \SRT_{g,n,m;\circ}^{(b,c,a)}\times\mbZ_{\ge 0}^n \to \QQ\left<\DRT^{(c)}_{g,n,m}\right>$ as follows.
\begin{itemize}
\item For a pair $(T,\od)\in\SRT_{g,n,m;\circ}^{(b,c,a)}\times\mbZ_{\ge 0}^n$, consider a potentially unstable vertex $v$. There is exactly one half-edge in $\tH_+^{em}(T)$ attached to $v$; denote it by $h'$, and denote by $\th$ the half-edge attached to $v$ that is directed to the mother of $v$. If $q(h')=q(\iota(\th))$, then we replace $v$ with an unstable genus $0$ vertex that retains just the two half-edges $h'$ and $\th$ (forgetting all of the extra legs attached to $v$), and we set $p(h'):=-1$. We do this for each potentially unstable vertex $v$. If there is at least one potentially unstable vertex $v$ with $q(h')\ne q(\iota(\th))$, then we set $f_1(T,\od):=0$ (note that if $q(h')\ne q(\iota(\th))$, then $\ee_*[T,\od]=0$).

\item Consider a not potentially unstable vertex $v$ of $T$ that also does not coincide with the root. Let $h_1,\ldots,h_k$ be the half-edges in~$\tH_+^{em}(T)$ attached to $v$, and denote by $\th$ the half-edge attached to $v$ that is directed to the mother of $v$. We replace $v$ with a vertex of the same genus that retains just the half-edges $h_1,\ldots,h_k,\th$ and take the weighted sum over all choices of $p(h_1),\ldots,p(h_k)$ such that $\sum q(h_i)-\sum p(h_i) = q(\iota(\th))+1$ with the weights equal to the coefficients on the right-hand side of Equation~\eqref{eq:Str1}. We do this for each vertex $v$ that is not potentially unstable and that is not the root.

\item For the root of $T$, we just define $p(h):=q(h)$ for any half-edge $h\in\tH^{em}_+(T)$ attached to it.
\end{itemize}
Clearly, as the outcome of the above procedure, we obtain a linear combination of decorated rooted trees from $\DRT^{(c)}_{g,n,m}$. We take this linear combination as the value of the function $f_1$ on the pair $(T,\od)$.

We claim that
\begin{gather} \label{eq:Formula-f1}
f_1(T,\od)=\sum_{\substack{(T',p)\in\DRT^{(c)}_{g,n,m}\\(T',p)\sim(T,\od)}} \frac{\prod_{h\in \tH_+^{em}(T)} \left(\sum_{i\in I_h}(d_i+1)-\sum_{h'\in H_h}(p(h')+1)\right)_{\left(p(h) +1\right)}}{\prod_{i=1}^n (d_i+1)!}(T',p),
\end{gather}
where the notation $(T',p) \sim (T,\od)$ means that $(T',p)$ is obtained from $(T,\od)$ by removing all extra legs and making a choice of the values of the function $p$ according to the construction of the map $f_1$. Indeed, the coefficient of $(T',p)$ on the right-hand side of~\eqref{eq:Formula-f1} is equal to the product of coefficients prescribed by choices of summands made according to Equation~\eqref{eq:Str1}: we just rewrite the product of these coefficients in terms of the vector $\od$ and the function $p$ on~$\tH_+^{em}(T')$.

Moreover, several remarks are in order:
\begin{itemize}
\item It follows from the string equations that Equation~\eqref{eq:admissibility-SRT} for $T$ implies Equation~\eqref{eq:RT-admissible} for every $(T',p)\sim (T,\od)$; thus $(T',p)$ belongs to $\DRT^{(c,a)}_{g,n,m}$. Therefore, the image of $f_1$ belongs to $\mbQ\left<\DRT^{(c,a)}_{g,n,m}\right>$.

\item It also follows from the string equations that 
\begin{gather}\label{eq:pTprime-condition}
\sum_{\substack{h\in \tH^{em}_+(T')}} p(h) + |E(T')|  = d_{\llbracket n \rrbracket}.
\end{gather} 

\item The function $q$ on $\tH_+^{em}(T)$ is uniquely reconstructed from $\od$ and the function $p$ on $\tH_+^{em}(T')$, but, given arbitrary $(T',p)\in\DRT^{(c,a)}_{g,n,m}$ and $\od\in\mbZ_{\ge 0}^n$, such a function $q$ may not exist. Note however that if  condition~\eqref{eq:pTprime-condition} is satisfied, then such a function $q$ exists if and only if the coefficient of $(T',p)$ on the right-hand side of~\eqref{eq:Formula-f1} is nonzero.  

\item The vertices of $T$ and $T'$ are in a natural bijection, and the canonical level functions for both trees are identified by this bijection. In particular, $\deg(T')=\deg(T)$.  
\end{itemize}

In the second step, we contract all unstable vertices of a decorated rooted tree $(T',p)\in \DRT^{(c,a)}_{g,n,m}$. This defines a map $f_2\colon\DRT^{(c,a)}_{g,n,m}\to\SRT_{g,n,m}$, $f_2\colon(T',p)\mapsto T''$, where the functions~$p$ and~$l_{T'}$ defined for $T'$ naturally descend to functions $p\colon \tH_+^{em} (T'')\to \ZZ_{\ge 0}$ and $l\colon V(T'') \to \ZZ_{\ge 1}$ (all values of $p$ are now nonnegative since there are no unstable vertices left). The resulting level function $l\colon V(T'') \to \ZZ_{\ge 1}$ is obviously $p$-admissible. Also, given $\od\in\mbZ_{\ge 0}^n$,  condition~\eqref{eq:pTprime-condition} is equivalent to the condition $\sum_{\substack{h\in \tH^{em}_+(T'')}} p(h) + |E(T'')|  = d_{\llbracket n \rrbracket}$. Conversely, given $T''\in\SRT_{g,n,m}$, $p\colon V(T'')\to\mbZ_{\ge 0}$, and $l\in\mathcal{L}(T'',p)$, there is a unique $(T',p)\in\DRT^{(c,a)}_{g,n,m}$ such that $f_2(T',p) = T''$ and $p$ and $l$ on $T''$ are induced by the corresponding functions $p$ and $l_{T'}$ on~$T'$ (with $\deg(T') = \deg(l)$). Finally, note that, also given  $\od\in\mbZ_{\ge 0}^n$, the coefficient of $(T',p)$ in Equation~\eqref{eq:Formula-f1} is equal to $C_{\str}(T'',p,\od)$.

This implies that for any $(T,\od)\in\SRT_{g,n,m;\circ}^{(b,c,a)}\times\mbZ_{\ge 0}^n$, the class $e_*[T,\od]$ is equal to 
\begin{align*}
\ee_*[T,\od] = \sum_{\substack{(T',p)\in\DRT^{(c,a)}_{g,n,m}\\(T',p) \sim (T,\od)}}C_{\str}\left(f_2(T',p),p,\od\right)\left[f_2(T',p),p\right],
\end{align*}
and therefore
\begin{align*}
B^m_{g,\od} & = \sum_{T\in\SRT^{(b,c,a)}_{g,n,m;\circ}}(-1)^{\deg(T)-1}\ee_*[T,\od]\\
& = \sum_{T\in\SRT^{(b,c,a)}_{g,n,m;\circ}}  \sum_{\substack{(T',p)\in\DRT^{(c,a)}_{g,n,m}\\(T',p) \sim (T,\od)}}(-1)^{\deg(l_{T'})-1}C_{\str}\left(f_2(T',p),p,\od\right)\left[f_2(T',p),p\right]\\
&=\sum_{\substack{(T',p)\in\DRT^{(c,a)}_{g,n,m}\\\sum_{\substack{h\in \tH^{em}_+(T')}} p(h) + |E(T')|  = d_{\llbracket n \rrbracket}}}(-1)^{\deg(l_{T'})-1}C_{\str}\left(f_2(T',p),p,\od\right)\left[f_2(T',p),p\right]\\
&=\sum_{T''\in\SRT_{g,n,m}}\sum_{\substack{p\colon\tH^{em}_+(T'')\to\mbZ_{\ge 0}\\\sum_{\substack{h\in \tH^{em}_+(T'')}} p(h) + |E(T'')|  = d_{\llbracket n \rrbracket}}}C_{\str}\left(T'',p,\od\right)\left[T'',p\right]\cdot\sum_{l\in\mathcal{L}(T'',p)}(-1)^{\deg(l)-1},
\end{align*}
which proves the theorem.
\end{proof}


\section{Conjectural relations and the fundamental properties of the DR and the DZ hierarchies}\label{section:conjectural relations and fundamental properties}

In this section, we show how certain fundamental properties of the Dubrovin--Zhang (DZ) and the double ramification (DR) hierarchies associated to F-cohomological field theories (F-CohFTs) on $\oM_{g,n}$ naturally follow from our Conjectures~\ref{conjecture1} and~\ref{conjecture2}. This extends the results from~\cite{BGR19}, where the authors showed that Conjecture~\ref{conjecture3} naturally implies that the DZ and the DR hierarchies associated to an arbitrary CohFT are Miura equivalent. Actually, in all of the aspects of the theory of integrable systems associated to CohFTs that we consider in this paper, a CohFT can be replaced by a slightly more general object called a \emph{partial CohFT}, introduced in~\cite{LRZ15}. In the theory of the DR hierarchies, it was first noticed in~\cite[Section~9.1]{BDGR18}. So we will systematically work with partial CohFTs instead of CohFTs.

We also discuss the role of the classes $B^m_{g,\od}$ with $\sum d_i=2g-2+m$, which are not involved in the conjectural relations from Conjectures~\ref{conjecture1},~\ref{conjecture2}, and~\ref{conjecture3}. It occurs that these classes control the polynomial parts of the equations (which are conjecturally polynomial) of the DZ hierarchies and the Miura transformation that conjecturally relates the DZ and the DR hierarchies. 

\subsection{Differential polynomials and evolutionary PDEs}

Let us fix $N\ge 1$ and consider formal variables $w^1,\ldots,w^N$. Let us briefly recall main notions and notation in the formal theory of evolutionary PDEs with one spatial variable (and refer a reader, for example, to~\cite{BRS21} for details):
\begin{itemize}
\item To the formal variables $w^\alpha$, we attach formal variables $w^\alpha_d$ with $d\ge 0$, and we introduce the ring of \emph{differential polynomials} $\cA_w:=\mbC[[w^*]][w^*_{\ge 1}]$ (in~\cite{BRS21}, it is denoted by~$\cA^0_w$). We identify $w^\alpha_0=w^\alpha$ and also set $w^\alpha_x:=w^\alpha_1$, $w^{\alpha}_{xx}:=w^\alpha_2$, \ldots.

\item The operator $\d_x\colon\cA_w\to\cA_w$ is defined by $\d_x:=\sum_{d\ge 0}w^\alpha_{d+1}\frac{\d}{\d w^\alpha_d}$.

\item Let $\cA_{w;d}\subset\cA_w$ be the homogeneous component of (differential) degree $d$, where $\deg w^\alpha_i:=i$. For $f\in\cA_w$, we denote by $f^{[d]}\in\cA_{w;d}$ the image of $f$ under the canonical projection $\cA_w\to\cA_{w;d}$. We will also use the notation $\cA_{w;\le d}:=\bigoplus_{i=0}^d\cA_{w;i}$.

\item The extended space of differential polynomials is defined by $\hcA_w:= \cA_w[[\eps]]$. Let $\hcA_{w;k}\subset\hcA_w$ be the homogeneous component of degree~$k$, where $\deg\eps:=-1$.  

\item A \emph{Miura transformation} (that is close to the identity) is a change of variables $w^\alpha\mapsto \tw^\alpha(w^*_*,\eps)$ of the form $\tw^\alpha(w^*_*,\eps)=w^\alpha+\eps f^\alpha(w^*_*,\eps)$, where
$f^\alpha\in\hcA_{w;1}$. 

\item A system of \emph{evolutionary PDEs} (with one spatial variable) is a system of equations of the form $\frac{\d w^\alpha}{\d t}=P^\alpha$, $1\le\alpha\le N$, where $P^\alpha\in\hcA_w$. Two systems $\frac{\d w^\alpha}{\d t}=P^\alpha$ and $\frac{\d w^\alpha}{\d s}=Q^\alpha$ are said to be \emph{compatible} (or, equivalently, the flows $\frac{\d}{\d t}$ and $\frac{\d}{\d s}$ are said to \emph{commute}) if $\sum_{n\ge 0}\left(\frac{\d P^\alpha}{\d w^\beta_n}\d_x^n Q^\beta-\frac{\d Q^\alpha}{\d w^\beta_n}\d_x^n P^\beta\right)=0$ for any $1\le\alpha\le N$. 
\end{itemize}

\subsection{F-CohFTs and partial CohFTs}

\subsubsection{Definitions}

\begin{definition}[\textit{cf.} \cite{BR21}]
An \emph{F-cohomological field theory} (F-CohFT) is a system of linear maps 
$$
c_{g,n+1}\colon V^*\otimes V^{\otimes n} \lra H^\even(\oM_{g,n+1}),\quad g,n\ge 0,\,\, 2g-1+n>0,
$$
where $V$ is an arbitrary finite-dimensional vector space, together with a special element $e\in V$, called the \emph{unit}, such that, choosing a basis $e_1,\ldots,e_{\dim V}$ of $V$ and the dual basis $e^1,\ldots,e^{\dim V}$ of~$V^*$, the following axioms are satisfied:
\begin{itemize}
\item[(i)] The maps $c_{g,n+1}$ are equivariant with respect to the $S_n$-action permuting the $n$ copies of~$V$ in $V^*\otimes V^{\otimes n}$ and the last $n$ marked points in $\oM_{g,n+1}$, respectively.

\item[(ii)] We have $\pi^* c_{g,n+1}(e^{\alpha_0}\otimes \otimes_{i=1}^n e_{\alpha_i}) = c_{g,n+2}(e^{\alpha_0}\otimes \otimes_{i=1}^n  e_{\alpha_i}\otimes e)$ for $1 \leq\alpha_0,\alpha_1,\ldots,\alpha_n\leq \dim V$, where $\pi\colon\oM_{g,n+2}\to\oM_{g,n+1}$ is the map that forgets the last marked point. Moreover, $c_{0,3}(e^{\alpha}\otimes e_\beta \otimes e) = \delta^\alpha_\beta$ for $1\leq \alpha,\beta\leq \dim V$.

\item[(iii)] We have $\gl^* c_{g_1+g_2,n_1+n_2+1}(e^{\alpha_0}\otimes \otimes_{i=1}^{n_1+n_2} e_{\alpha_i}) = c_{g_1,n_1+2}(e^{\alpha_0}\otimes \otimes_{i\in I} e_{\alpha_i} \otimes e_\mu)\otimes c_{g_2,n_2+1}(e^{\mu}\otimes \otimes_{j\in J} e_{\alpha_j})$ for $1 \leq\alpha_0,\alpha_1,\ldots,\alpha_{n_1+n_2}\leq \dim V$, where $I \sqcup J = \llbracket n_1+n_2+1\rrbracket\backslash\{1\}$, $|I|=n_1$, $|J|=n_2$, and $\gl\colon\oM_{g_1,n_1+2}\times\oM_{g_2,n_2+1}\to \oM_{g_1+g_2,n_1+n_2+1}$ is the corresponding gluing map.

The gluing map $\gl$ creates a nodal curve sewing the last marked point on a curve in $\oM_{g_1,n_1+2}$ (the point labeled by $n_1+2$) and the first marked point on a curve in $\oM_{g_2,n_2+1}$ (the point labeled by $1$) into a node. Under the gluing map $\gl$, the first marked point of a curve in $\oM_{g_1,n_1+2}$ becomes the first marked point on the resulting nodal curve in $\oM_{g_1+g_2,n_1+n_2+1}$, and the other marked points are relabeled according to the identification $I \sqcup J = \llbracket n_1+n_2+1\rrbracket\backslash\{1\}$.
\end{itemize}
It is easy to see that the validity of the above properties does not depend on the choice of a basis of $V$. 
\end{definition}

\begin{definition}\cite{LRZ15}
A \emph{partial CohFT} is a system of linear maps 
$$
c_{g,n}\colon V^{\otimes n} \lra H^\even(\oM_{g,n}),\quad 2g-2+n>0,
$$
where $V$ is an arbitrary finite-dimensional vector space, together with a special element $e\in V$, called the \emph{unit}, and a symmetric nondegenerate bilinear form $\eta\in (V^*)^{\otimes 2}$, called the \emph{metric}, such that, choosing a basis $e_1,\ldots,e_{\dim V}$ of $V$, the following axioms are satisfied:
\begin{itemize}
\item[(i)] The maps $c_{g,n}$ are equivariant with respect to the $S_n$-action permuting the $n$ copies of~$V$ in $V^{\otimes n}$ and the $n$ marked points in $\oM_{g,n}$, respectively.

\item[(ii)] We have $\pi^* c_{g,n}( \otimes_{i=1}^n e_{\alpha_i}) = c_{g,n+1}(\otimes_{i=1}^n  e_{\alpha_i}\otimes e)$ for $1 \leq\alpha_1,\ldots,\alpha_n\leq \dim V$, where $\pi\colon\oM_{g,n+1}\to\oM_{g,n}$ is the map that forgets the last marked point. Moreover, $c_{0,3}(e_{\alpha}\otimes e_\beta \otimes e) =\eta(e_\alpha\otimes e_\beta) =:\eta_{\alpha\beta}$ for $1\leq \alpha,\beta\leq \dim V$.

\item[(iii)] We have $\gl^* c_{g_1+g_2,n_1+n_2}( \otimes_{i=1}^{n_1+n_2} e_{\alpha_i}) = \eta^{\mu \nu}c_{g_1,n_1+1}(\otimes_{i\in I} e_{\alpha_i} \otimes e_\mu)\otimes c_{g_2,n_2+1}(\otimes_{j\in J} e_{\alpha_j}\otimes e_\nu)$ for $1\leq\alpha_1,\ldots,\alpha_{n_1+n_2}\leq \dim V$, where $I \sqcup J = \llbracket n_1+n_2\rrbracket$, $|I|=n_1$, $|J|=n_2$, and $\gl\colon\oM_{g_1,n_1+1}\times\oM_{g_2,n_2+1}\to \oM_{g_1+g_2,n_1+n_2}$ is the corresponding gluing map and where~$\eta^{\alpha\beta}$ is defined by $\eta^{\alpha \mu}\eta_{\mu \beta} = \delta^\alpha_\beta$ for $1\leq \alpha,\beta\leq \dim V$.
\end{itemize}
\end{definition}

Clearly, given a partial CohFT $\{c_{g,n}\colon V^{\otimes n}\to H^{\even}(\oM_{g,n})\}$, the maps $\tc_{g,n+1}\colon V^*\otimes V^{\otimes n}\to H^{\even}(\oM_{g,n+1})$ defined by $\tc_{g,n+1}(e^{\alpha_0}\otimes\otimes_{i=1}^n e_{\alpha_i}):=\eta^{\alpha_0\mu} c_{g,n+1}(e_\mu\otimes\otimes_{i=1}^n e_{\alpha_i})$ form an F-CohFT.

\subsubsection{Various potentials associated to partial CohFTs and F-CohFTs}

First consider an arbitrary partial CohFT $\{c_{g,n}\colon V^{\otimes n}\to H^{\even}(\oM_{g,n})\}$ with $\dim V=N$, metric $\eta\colon V^{\otimes 2}\to \mbC$, and unit $e\in V$. We fix a basis $e_1,\ldots,e_N\in V$ and define the \emph{potential} of our partial CohFT by
\begin{gather*}
\mcF:=\sum\frac{\eps^{2g}}{n!}\left(\int_{\oM_{g,n}}c_{g,n}(\otimes_{i=1}^n e_{\alpha_i})\prod_{i=1}^n\psi_i^{d_i}\right)\prod_{i=1}^n t^{\alpha_i}_{d_i}\in\mbC[[t^*_*,\eps]].
\end{gather*}
The potential satisfies the \emph{string} and the \emph{dilaton} equations:
\begin{align}
&\frac{\d\mcF}{\d t^\un_0}=\sum_{n\ge 0}t^\alpha_{n+1}\frac{\d\mcF}{\d t^\alpha_n}+\frac{1}{2}\eta_{\alpha\beta}t^\alpha_0 t^\beta_0+\eps^2\int_{\oM_{1,1}}c_{1,1}(e),\label{eq:string equation}\\
&\frac{\d\mcF}{\d t^\un_1}=\sum_{n\ge 0}t^\alpha_n\frac{\d\mcF}{\d t^\alpha_n}+\eps\frac{\d\mcF}{\d\eps}-2\mcF+\eps^2\int_{\oM_{1,1}}\psi_1 c_{1,1}(e),\label{eq:dilaton equation}
\end{align}
where $\frac{\d}{\d t^\un_0}:=A^\mu\frac{\d}{\d t^\mu_0}$ and the coefficients $A^\mu$ are given by $e=A^\mu e_\mu$. Let us also define formal power series $w^{\top;\alpha}:=\eta^{\alpha\mu}\frac{\d^2\mcF}{\d t^\mu_0\d t^\un_0}$ and $w^{\top;\alpha}_n:=\frac{\d^n w^{\top;\alpha}}{(\d t^\un_0)^n}$.

For $d\ge 0$, denote by $\mbC[[t^*_*]]^{(d)}$ the subset of $\mbC[[t^*_*]]$ formed by infinite linear combinations of monomials $\prod t^{\alpha_i}_{d_i}$ with $\sum d_i\ge d$. Clearly, $\mbC[[t^*_*]]^{(d)}\subset \mbC[[t^*_*]]$ is an ideal. From the string equation~\eqref{eq:string equation}, it follows that
\begin{gather}\label{eq:main property of wtop}
w^{\top;\alpha}_n=t^\alpha_n+\delta_{n,1}A^\alpha+R^\alpha_n(t^*_*)+O(\eps^2)\quad \text{for some $R^\alpha_n\in\mbC[[t^*_*]]^{(n+1)}$}.
\end{gather}

The following obvious statement will be very useful, so we would like to present it as a separate lemma.

\begin{lemma}\label{lemma:wt-change}
Suppose that a family of formal power series $\tw^\alpha_n\in\mbC[[t^*_*,\eps]]$, $1\le\alpha\le N$, $n\ge 0$, satisfies the property $\tw^\alpha_n=t^\alpha_n+\delta_{n,1}A^\alpha+R^\alpha_n(t^*_*)+O(\eps^2)$ for some $R^\alpha_n\in\mbC[[t^*_*]]^{(n+1)}$. Then any formal power series in the variables $t^\alpha_a$, $1\le\alpha\le N$, $a\ge 0$, and $\eps$ can be expressed as a formal power series in $\left(\widetilde{w}^\beta_b-\delta_{b,1}A^\beta\right)$ and $\eps$ in a unique way. In particular, for any two differential polynomials $P,Q\in\hcA_w$, the equality $P|_{w^\alpha_a=\tw^\alpha_a}=Q|_{w^\alpha_a=\tw^\alpha_a}$ implies that $P=Q$.
\end{lemma}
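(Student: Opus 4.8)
The plan is to recast the lemma as the assertion that a single ring homomorphism is an isomorphism, and then to prove this by two successive ``formal inverse function theorem'' arguments, carried out with respect to two different filtrations in a prescribed order.

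Set $u^\alpha_n\coloneqq\tw^\alpha_n-\delta_{n,1}A^\alpha=t^\alpha_n+R^\alpha_n(t^*_*)+O(\eps^2)$ with $R^\alpha_n\in\mbC[[t^*_*]]^{(n+1)}$, so that each $u^\alpha_n$ lies in the maximal ideal of $\mbC[[t^*_*,\eps]]$. Introducing fresh variables $z^\alpha_n$, the lemma is exactly the statement that the substitution homomorphism $\Phi\colon\mbC[[z^*_*,\eps]]\to\mbC[[t^*_*,\eps]]$, $z^\alpha_n\mapsto u^\alpha_n$, $\eps\mapsto\eps$, is an isomorphism: surjectivity is the existence of the claimed expansion, injectivity is its uniqueness, and the final ``in particular'' follows because the substitution $w^\alpha_a\mapsto\tw^\alpha_a$ on $\hcA_w=\cA_w[[\eps]]$ factors as $\Phi$ precomposed with the translation $w^\alpha_a\mapsto z^\alpha_a+\delta_{a,1}A^\alpha$, which is an automorphism of $\hcA_w$ (translation by a constant in the $w^\alpha_1$-direction is legitimate on $\cA_w$ since the dependence on $w^\alpha_{\ge 1}$ is polynomial). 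Thus the whole task is to invert the formal change of variables $t^\alpha_n\mapsto u^\alpha_n$.

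First I would reduce modulo $\eps$. Since $\Phi$ fixes $\eps$, it is filtration-preserving for the $\eps$-adic filtrations on source and target, both of which are complete and separated, and in every degree its associated graded map is (a copy of) the reduction $\oS\colon\mbC[[z^*_*]]\to\mbC[[t^*_*]]$, $z^\alpha_n\mapsto t^\alpha_n+R^\alpha_n(t^*_*)$. By the standard fact that a filtration-preserving homomorphism of complete separated filtered rings is an isomorphism as soon as the induced map on associated graded rings is, it suffices to prove that $\oS$ is an isomorphism. For that I would use the weight filtration by the ideals $\mbC[[t^*_*]]^{(\bullet)}$, which is likewise complete and separated since the weight-$d$ part of any power series is reached after finitely many steps. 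As $R^\alpha_n\in\mbC[[t^*_*]]^{(n+1)}$ has weight strictly larger than $n=\mathrm{wt}(t^\alpha_n)$, the map $\oS$ preserves this filtration and induces the identity on the associated graded ring; invoking the same principle once more gives that $\oS$ is an isomorphism. Concretely, its inverse is obtained by solving $t^\alpha_n=z^\alpha_n-R^\alpha_n(t^*_*)$ recursively in the weight: modulo $\mbC[[t^*_*]]^{(n+1)}$ one has $t^\alpha_n\equiv z^\alpha_n$, after which each further weight is determined by the lower-weight data already found.

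I do not expect a genuine obstacle, since the statement is essentially formal; the one point that does require care, and that forces the two-step structure above, is that the two filtrations cannot be merged into one. The hypothesis bounds the error $\tw^\alpha_n-t^\alpha_n-\delta_{n,1}A^\alpha-R^\alpha_n(t^*_*)$ only up to order $\eps^2$ and says nothing about its $\sum d_i$-weight, so for large $n$ this error is \emph{not} of higher combined $(\eps\text{-power}+\text{weight})$-degree than $t^\alpha_n$, and $\Phi$ fails to be filtered for that combined grading; performing the $\eps$-adic reduction first and only afterwards the weight filtration is what circumvents this. The remaining points — that substituting maximal-ideal elements into formal power series in these (infinitely many) variables is legitimate, and that the successive approximations defining the inverses converge — are exactly the completeness statements for the $\eps$-adic and weight filtrations used above.
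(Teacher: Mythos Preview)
Your proof is correct; the paper itself does not supply a proof, merely calling the statement ``obvious'' and stating it as a lemma for later reference. Your two-step filtration argument (first $\eps$-adic, then by the weight $\sum d_i$) is a clean way to make the obvious rigorous, and your remark that the two filtrations cannot be combined into a single one is a genuine subtlety worth recording.
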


In~\cite[Proposition~7.2]{BDGR18}, the authors proved that there exists a unique differential polynomial $\cP\in\hcA_{w;-2}$ such that the difference
$$
\mcF^{\red}:=\mcF-\left.\cP\right|_{w^\gamma_c=w^{\top;\gamma}_c}
$$
satisfies the condition
$$
\Coef_{\eps^{2g}}\left.\frac{\d^n\mcF^{\red}}{\d t^{\alpha_1}_{d_1}\cdots\d t^{\alpha_n}_{d_n}}\right|_{t^*_*=0}=0\quad\text{if } \sum d_i\le 2g-2
$$
(formally, this was proved for CohFTs, but the proof works for partial CohFTs as well). The formal power series $\mcF^{\red}$ is called the \emph{reduced potential} of the partial CohFT. Consider the expansions $\mcF^\red=\sum_{g\ge 0}\eps^{2g}\mcF^\red_g$ and $\cP=\sum_{g\ge 1}\eps^{2g}\cP_g$. Note that $\mcF^\red_0=\mcF_0$. In \cite[Proposition~3.5]{BGR19}, the authors proved that 
\begin{gather}\label{eq:reduced correlators}
\left.\frac{\d^n \mcF^{\red}_g}{\d t^{\alpha_1}_{d_1}\cdots\d t^{\alpha_n}_{d_n}}\right|_{t^*_*=0}=\int_{\oM_{g,n}}B^0_{g,\od}c_{g,n}\left(\otimes_{i=1}^n e_{\alpha_i}\right),\quad
\begin{minipage}{7cm}
$n\ge 1,\, d_1,\ldots,d_n\ge 0,\, \sum d_i\ge 2g-1$,\\
$1\le\alpha_1,\ldots,\alpha_n\le N$.
\end{minipage}
\end{gather}
Also note that $\left.\mcF^\red_g\right|_{t^*_*=0}=0$.

We can now present an explicit formula for the differential polynomial~$\cP$.

\begin{theorem}\label{theorem:geometric formula for reducing Miura transformation}
For $g\ge 1$, we have $\left.\cP_g\right|_{w^*_*=0}=0$ and
\begin{gather}\label{eq:formula for Pg}
\left.\frac{\d^n\cP_g}{\d w^{\alpha_1}_{d_1}\cdots\d w^{\alpha_n}_{d_n}}\right|_{w^*_*=0}=\int_{\oM_{g,n}}B^0_{g,\od}c_{g,n}(\otimes_{i=1}^n e_{\alpha_i}),\quad 
\begin{minipage}{6cm}
$n\ge 1$,\\
$d_1,\ldots,d_n\ge 0,\, \sum d_i=2g-2$,\\
$1\le\alpha_1,\ldots,\alpha_n\le N$.
\end{minipage}
\end{gather}
\end{theorem}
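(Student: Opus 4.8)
The starting point is the defining relation $\mcF=\mcF^{\red}+\left.\cP\right|_{w^\gamma_c=w^{\top;\gamma}_c}$ together with the fact that, since $\cP\in\hcA_{w;-2}$, each $\cP_g$ is a homogeneous differential polynomial of differential degree $2g-2$. The vanishing $\left.\cP_g\right|_{w^*_*=0}=0$ is automatic for $g\ge 2$ by homogeneity; for $g=1$, where $\cP_1\in\mbC[[w^*_0]]$, one takes the genus-$1$ component of the defining relation and evaluates at $t^*_*=0$: the left-hand side vanishes because $\oM_{1,0}$ is empty, $\left.\mcF^{\red}_1\right|_{t^*_*=0}=0$ because the defining property of $\cP$ forces $\mcF^{\red}_1\in\mbC[[t^*_*]]^{(1)}$, and $\left.w^{\top;\alpha}\right|_{t^*_*=0}=0$ by the string equation~\eqref{eq:string equation}; hence $\cP_1(0)=0$.

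For the derivative formula, fix $\od=(d_1,\dots,d_n)$ with $\sum d_i=2g-2$, apply $\frac{\d^n}{\d t^{\alpha_1}_{d_1}\cdots\d t^{\alpha_n}_{d_n}}$ to the defining relation, set $t^*_*=0$, and take the coefficient of $\eps^{2g}$. The left-hand side is $\int_{\oM_{g,n}}c_{g,n}(\otimes_{i=1}^ne_{\alpha_i})\prod_i\psi_i^{d_i}$, and the contribution of $\mcF^{\red}$ vanishes precisely because $\sum d_i=2g-2\le 2g-2$, which is the defining property of $\cP$; hence
\begin{gather*}
\int_{\oM_{g,n}}c_{g,n}(\otimes_{i=1}^ne_{\alpha_i})\prod_i\psi_i^{d_i}=\Coef_{\eps^{2g}}\left.\frac{\d^n\bigl(\left.\cP\right|_{w^\gamma_c=w^{\top;\gamma}_c}\bigr)}{\d t^{\alpha_1}_{d_1}\cdots\d t^{\alpha_n}_{d_n}}\right|_{t^*_*=0}.
\end{gather*}
I then expand the right-hand side by the multivariate chain rule, writing $\cP=\sum_h\eps^{2h}\cP_h$ and, for each jet variable, $w^{\top;\beta}_b=t^\beta_b+\delta_{b,1}A^\beta+R^\beta_b+\sum_{k\ge 1}\eps^{2k}w^{\top;\beta}_{b,[k]}$, where $R^\beta_b\in\mbC[[t^*_*]]^{(b+1)}$ has no linear part (a consequence of the string equation) and where the coefficients of the $R^\beta_b$ and of the $w^{\top;\beta}_{b,[k]}$ are intersection numbers of the form $\int_{\oM_{g',n'}}c_{g',n'}(\cdots)\prod\psi$ with one insertion equal to the unit $e$ and one carrying $\psi^0$. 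Among the resulting terms, exactly one equals $\left.\frac{\d^n\cP_g}{\d w^{\alpha_1}_{d_1}\cdots\d w^{\alpha_n}_{d_n}}\right|_{w^*_*=0}$: it comes from $h=g$ with each jet variable of $\cP_g$ hit by a single $t$-derivative through the linear term $t^\beta_b$ of $w^{\top;\beta}_b$, using that $\left.\d w^{\top;\beta}_b/\d t^\alpha_a\right|_{t^*_*=0}=\delta^\beta_\alpha\delta_{ab}$ and that, by homogeneity of $\cP_g$ together with $\sum d_i=2g-2$, the residual evaluation at $w^\gamma_1=A^\gamma$ (all other jets zero) coincides with the evaluation at $w^*_*=0$.

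It then remains to identify the sum of all the remaining terms with minus the contribution of the nontrivial boundary strata to $\int_{\oM_{g,n}}B^0_{g,\od}c_{g,n}(\otimes_{i=1}^ne_{\alpha_i})$; combined with the displayed identity and the fact that the single-vertex term in Theorem~\ref{theorem:arbitrary m reformulation} gives $B^0_{g,\od}=\prod_i\psi_i^{d_i}+(\text{boundary})$, this yields the asserted formula. This matching is the main obstacle. By the splitting axiom of the partial CohFT, $\int_{\oM_{g,n}}[T,p]\,c_{g,n}=\prod_{e\in E(T)}\eta^{\mu_e\nu_e}\prod_{v\in V(T)}\int_{\oM_{g(v),n(v)}}c_{g(v),n(v)}(\cdots)\prod_{h\in H[v]}\psi_h^{p(h)}$, so after pairing with $c_{g,n}$ the expansion of $B^0_{g,\od}$ from Theorem~\ref{theorem:arbitrary m reformulation} is a sum over stable trees of products of such intersection numbers weighted by $C_{\lvl}(T,p)C_{\str}(T,p,\od)$; on the other side, the remaining chain-rule terms are built from the same intersection numbers (through $w^\top\sim\d^2\mcF$), from coefficients of the $R^\beta_b$ — which by the string equation~\eqref{eq:Str1} carry exactly the Pochhammer factors defining $C_{\str}$ — and from $\cP_h$ with $h<g$, for which one uses induction on $g$ (unfolding $\cP_h$ into $B^0_{h,\cdot}$ and thereby generating, recursively, the remaining tree structure and the alternating signs of $C_{\lvl}$). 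Reorganizing the remaining terms into the stable-tree sum is the same two-step bookkeeping — apply the string equation, then contract potentially unstable vertices and sum over level structures — used to describe the pushforward $\ee_*$ in the proof of Theorem~\ref{theorem:arbitrary m reformulation} and, in the range $\sum d_i\ge 2g-1$, in the proof of~\eqref{eq:reduced correlators} in~\cite{BGR19}; the only new feature for $\sum d_i=2g-2$ is that the single-vertex / trivial-tree contribution, which is identically zero when $\sum d_i\ge 2g-1$, is now present and produces the term $\left.\d^n\cP_g/\d w^{\alpha_1}_{d_1}\cdots\d w^{\alpha_n}_{d_n}\right|_{w^*_*=0}$ above. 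I expect the delicate point to be verifying that nothing else in this bookkeeping changes when $\sum d_i=2g-2$ rather than $\ge 2g-1$, and keeping track of the triangular structure by which the family of equations (over all $\od$ with $\sum d_i=2g-2$) determines all the coefficients of $\cP_g$.
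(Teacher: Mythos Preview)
Your argument for the vanishing $\cP_g|_{w^*_*=0}=0$ is fine and in fact cleaner than the paper's, which re-derives the homogeneity $\cP_g\in\cA_{w;2g-2}$ via the dilaton operator $L$ before concluding.

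For the main formula, however, your approach diverges from the paper's and leaves a genuine gap. The paper does \emph{not} work from the relation $\mcF=\mcF^{\red}+\cP|_{w=w^\top}$ via the chain rule. Instead it goes back to the explicit recursive construction $\mcF^{(-1)}=\mcF,\mcF^{(0)},\mcF^{(1)},\ldots$ that produces $\cP$ level by level (Equation~\eqref{eq:formula for Fl}). The key observation is purely structural: since the subtraction at level $l$ is a differential polynomial of degree $\le l-(g(l)-1)^2$, and since $\cP_g$ is homogeneous of degree $2g-2$, one gets directly
\[
\left.\frac{\d^n\cP_g}{\d w^{\alpha_1}_{d_1}\cdots\d w^{\alpha_n}_{d_n}}\right|_{w^*_*=0}
=\Coef_{\eps^{2g}}\left.\frac{\d^n\mcF^{(g^2-2)}}{\d t^{\alpha_1}_{d_1}\cdots\d t^{\alpha_n}_{d_n}}\right|_{t^*_*=0},\qquad \sum d_i=2g-2.
\]
The right-hand side is then computed by \cite[Equation~(19)]{BGR19}, which already expresses these coefficients as a sum over a specific set of $(j,d)$-admissible trees; a short lemma (\cite[Lemma~3.6]{BGR19}) shows that this set coincides, up to trees with vanishing contribution, with $\SRT^{(b,c,a)}_{g,n,0;\circ}$, giving $B^0_{g,\od}$. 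No induction on $g$ and no chain-rule bookkeeping is needed.

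Your route --- chain rule plus induction on $g$, matching the ``remaining terms'' against the boundary part of $B^0_{g,\od}$ --- is plausible in outline but is not actually carried out. The references you invoke do not do this matching for you: Theorem~\ref{theorem:arbitrary m reformulation} rewrites the \emph{class} $B^m_{g,\od}$, not a chain-rule expansion of $\cP|_{w=w^\top}$; and the proof of~\eqref{eq:reduced correlators} in~\cite{BGR19} proceeds exactly via the recursion $\mcF^{(l)}$, not via the chain rule. Concretely, your induction hypothesis gives $\d^m\cP_h/\d w|_{w=0}$ for $\sum d'_i=2h-2$, but in the chain-rule expansion $\cP_h$ is paired with products of $t$-derivatives of $w^{\top}$ at mixed $\eps$-orders; turning this into a sum over stable rooted trees with the correct admissibility condition and the correct signs $(-1)^{\deg(T)-1}$ is exactly the nontrivial combinatorics that \cite[Equation~(19)]{BGR19} packages, and you would have to reproduce it. Your closing sentence (``I expect the delicate point to be verifying that nothing else in this bookkeeping changes\ldots'') is precisely where the proof is missing.
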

\begin{proof}
Let us first recall the construction of the differential polynomial $\cP$, following~\cite[proof of Proposition~3.5]{BGR19}.

The reduced potential $\mcF^\red$ is constructed by a recursive procedure that kills all of the monomials $\eps^{2g}\prod t^{\alpha_i}_{d_i}$ with $g\ge 1$ and $\sum d_i\le 2g-2$ in the potential~$\mcF$. Let us assign to such a monomial a level $l:=(g-1)^2+\sum d_i$. We see that $(g-1)^2\le l<g^2$, which implies that $g$ is uniquely determined by $l$. Denote this $g$ by $g(l)$. Let us define a sequence of formal power series $\mcF^{(-1)}:=\mcF,\mcF^{(0)},\mcF^{(1)},\ldots$ by
\begin{gather}\label{eq:formula for Fl}
\mcF^{(l)}:=\mcF^{(l-1)}-\sum_{n\ge 0}\eps^{2g(l)}\hspace{-0.2cm}\underline{\sum_{\substack{d_1,\ldots,d_n\ge 0\\\sum d_i=l-(g(l)-1)^2}}\hspace{-0.1cm}\left(\Coef_{\eps^{2g(l)}}\left.\frac{\d^n\mcF^{(l-1)}}{\d t^{\alpha_1}_{d_1}\cdots\d t^{\alpha_n}_{d_n}}\right|_{t^*_*=0}\right)\frac{\prod (w^{\alpha_i}_{d_i}-\delta_{d_i,1}A^{\alpha_i})}{n!}},\quad l\ge 0.
\end{gather}
From~\eqref{eq:main property of wtop} it follows that going from $\mcF^{(l-1)}$ to $\mcF^{(l)}$, we kill all of the monomials of level $l$ and do not touch the monomials of level strictly less than $l$. Then $\mcF^{\red}$ is equal to the limit $\mcF^\red:=\lim_{l\to\infty}\mcF^{(l)}$, and the underlined terms (multiplied by $\eps^{2g(l)}$), for all $l\ge 0$, produce the required differential polynomial $\cP$. 

In order to show that $\cP\in\hcA_{w;-2}$, we have to check that $\cP_g^{[k]}=0$ for $g\ge 2$ and $k<2g-2$. Suppose that this is not true, and denote by $g_0$ the minimal $g$ such that $\cP^{[k]}_g\ne 0$ for some $k<2g-2$. Consider the operator 
\begin{gather}\label{eq:Loperator}
L:=\frac{\d}{\d t^\un_1}-\sum_{n\ge 0}t^\alpha_n\frac{\d}{\d t^\alpha_n}-\eps\frac{\d}{\d\eps}.\end{gather}
The dilaton equation~\eqref{eq:dilaton equation} implies that $L w^{\top;\alpha}_n=n w^{\top;\alpha}_n$. Therefore, we have the following sequence of equalities:
$$
\eps^2\int_{\oM_{1,1}}\psi_1 c_{1,1}(e)=(L+2)\mcF=(L+2)\mcF^\red+\underline{\sum_{g\ge 2}\eps^{2g}\sum_{k<2g-2}(k+2-2g)\left.\cP^{[k]}_g\right|_{w^\gamma_n=w^{\top;\gamma}_c}}.
$$
We see that the coefficient $\eps^{2g_0}$ in $(L+2)\mcF$ is zero. However, the coefficient of $\eps^{2g_0}$ in $(L+2)\mcF^\red$ belongs to $\mbC[[t^*_*]]^{(2g_0-2)}$, while the coefficient of $\eps^{2g_0}$ in the underlined sum does not belong to $\mbC[[t^*_*]]^{(2g_0-2)}$, which follows from~\eqref{eq:main property of wtop}. This contradiction proves that $\cP_g\in\cA_{w;2g-2}$, and we also see that $\left.\cP_g\right|_{w^*_*=0}=0$.

In order to prove Equation~\eqref{eq:formula for Pg}, note that the underlined terms in~\eqref{eq:formula for Fl} give a differential polynomial from $\cA_{w;\le l-(g(l)-1)^2}$. Using that $\cP_g\in\cA_{w;2g-2}$, we therefore obtain
$$
\left.\frac{\d^n\cP_g}{\d w^{\alpha_1}_{d_1}\cdots\d w^{\alpha_n}_{d_n}}\right|_{w^*_*=0}=\Coef_{\eps^{2g}}\left.\frac{\d^n\mcF^{(g^2-2)}}{\d t^{\alpha_1}_{d_1}\cdots\d t^{\alpha_n}_{d_n}}\right|_{t^*_*=0},\quad g,n\ge 1,\,\sum d_i=2g-2.
$$

Following~\cite[proof of Proposition~3.5]{BGR19}, we call a tree $T\in\SRT^{(b,c)}_{g,n,0;\circ}$ \emph{$(j,d)$-admissible} if for any $1\le k<\deg(T)$, we have $g_k(T)\le j$ and 
$$
\sum_{\substack{h\in H^e_+(T)\\l(h)=k}}q(h)\le\begin{cases}
2g_k(T)-2&\text{if $g_k(T)<j$},\\
d&\text{if $g_k(T)=j$}.
\end{cases} 
$$
Denote by $\SRT^{(b,c);(j,d)}_{g,n,0;\circ}\subset \SRT^{(b,c)}_{g,n,0;\circ}$ the set of such trees. By~\cite[Equation~(19)]{BGR19}, we have
$$
\Coef_{\eps^{2g}}\left.\frac{\d^n\mcF^{(g^2-2)}}{\d t^{\alpha_1}_{d_1}\cdots\d t^{\alpha_n}_{d_n}}\right|_{t^*_*=0}=\int_{\oM_{g,n}}\left(\sum_{T\in\SRT^{(b,c);(g,2g-3)}_{g,n,0;\circ}}(-1)^{\deg(T)-1}\ee_*[T,\od]\right)c_{g,n}(\otimes_{i=1}^n e_{\alpha_i})
$$
for $g,n\ge 1$ and $\sum d_i=2g-2$, and it remains to check that
$$
\sum_{T\in\SRT^{(b,c);(g,2g-3)}_{g,n,0;\circ}}(-1)^{\deg(T)-1}\ee_*[T,\od]=B^0_{g,\od}.
$$
Clearly, $\SRT^{(b,c);(g,2g-3)}_{g,n,0;\circ}\subset\SRT^{(b,c,a)}_{g,n,0;\circ}$. The converse inclusion is not necessarily true; however, $\left\{\left.T\in\SRT^{(b,c,a)}_{g,n,0;\circ}\;\right|\;\ee_*[T,\od]\ne 0\right\}\subset\SRT^{(b,c);(g,2g-3)}_{g,n,0;\circ}$ thanks to the following statement proved in~\cite{BGR19}.

\begin{lemma}[\textit{cf.}~\protect{\cite[Lemma~3.6]{BGR19}}]
Let $g\ge 0$, $n\ge 1$, $d_1,\ldots,d_n\ge 0$, and $T\in\SRT^{(b,c)}_{g,n,0;\circ}$. Suppose $\ee_*[T,\od]\ne 0$ and $g_k(T)=g_{k+1}(T)$ for some $1\le k<\deg(T)$. Then 
$$
\sum_{\substack{h\in\tH^{em}_+(T)\\l(h)=k+1}}q(h)>\sum_{\substack{h\in\tH^{em}_+(T)\\l(h)=k}}q(h).
$$
\end{lemma}

This concludes the proof of Theorem~\ref{theorem:geometric formula for reducing Miura transformation}.
\end{proof}

Now consider  an arbitrary F-CohFT $\{c_{g,n+1}\colon V^*\otimes V^{\otimes n}\to H^{\even}(\oM_{g,n+1})\}$ with $\dim V=N$ and unit $e\in V$. We fix a basis $e_1,\ldots,e_N\in V$ and assign to the F-CohFT a collection of potentials $\mcF^{\alpha,a}$, $1\le\alpha\le N$, $a\ge 0$, by setting 
\begin{gather*}
\mcF^{\alpha,a}:=\sum\frac{\eps^{2g}}{n!}\left(\int_{\oM_{g,n+1}}c_{g,n+1}(e^\alpha\otimes\otimes_{i=1}^n e_{\alpha_i})\psi_1^a\prod_{i=1}^n\psi_{i+1}^{d_i}\right)\prod_{i=1}^n t^{\alpha_i}_{d_i}\in\mbC[[t^*_*,\eps]]. 
\end{gather*}
These potentials satisfy the following system of equations, which can be considered as an analog of the string equation~\eqref{eq:string equation}:
\begin{gather}\label{eq:string for F-CohFT}
\frac{\d\mcF^{\alpha,a}}{\d t^\un_0}=\mcF^{\alpha,a-1}+\sum_{n\ge 0}t^\beta_{n+1}\frac{\d\mcF^{\alpha,a}}{\d t^\beta_n},\quad 1\le\alpha\le N,\,a\ge 0,
\end{gather}
where we adopt the convention $\mcF^{\alpha,-1}:=t^\alpha_0$. There is also an analog of the dilaton equation:
\begin{gather}\label{eq:dilaton for F-CohFT}
\frac{\d\mcF^{\alpha,a}}{\d t^\un_1}=\sum_{n\ge 0}t^\beta_n\frac{\d\mcF^{\alpha,a}}{\d t^\beta_n}+\eps\frac{\d\mcF^{\alpha,a}}{\d\eps}-\mcF,\quad 1\le\alpha\le N,\,a\ge 0.
\end{gather}
Similarly to the case of a partial CohFT, let us define $w^{\top;\alpha}:=\frac{\d\mcF^{\alpha,0}}{\d t^\un_0}$ and $w^{\top;\alpha}_n:=\frac{\d^n w^{\top;\alpha}}{(\d t^\un_0)^n}$. From~\eqref{eq:string for F-CohFT}, it follows that the formal power series $w^{\top;\alpha}$ satisfy the property~\eqref{eq:main property of wtop}.

For $m,n\ge 1$, denote by $\perm_{m,n}$ the map $\oM_{g,n+m}\to\oM_{g,n+m}$ induced by the permutation of marked points $(1,\ldots,m,m+1,\ldots,m+n)\mapsto(m+1,\ldots,m+n,1,\ldots,m)$.

\begin{theorem}\label{theorem:reducing transformation for F-CohFT}
Let us fix $1\le\alpha\le N$, $a\ge 0$, $k\ge 0$, and $k$-tuples $\obeta=(\beta_1,\ldots,\beta_k)$, $\ob=(b_1,\ldots,b_k)$, where $1\le\beta_i\le N$, $b_i\ge 0$.
\begin{enumerate}
\item\label{th46-1} There exists a unique differential polynomial $\tOmega^{\alpha,a}_{\obeta,\ob}\in\hcA_{w;k-1}$ such that the difference
$$
\Omega^{\red;\alpha,a}_{\obeta,\ob}:=\frac{\d^k\mcF^{\alpha,a}}{\d t^{\beta_1}_{b_1}\cdots\d t^{\beta_k}_{b_k}}-\left.\tOmega^{\alpha,a}_{\obeta,\ob}\right|_{w^\gamma_c=w^{\top;\gamma}_c}
$$
satisfies the condition
$$
\Coef_{\eps^{2g}}\left.\frac{\d^n\Omega^{\red;\alpha,a}_{\obeta,\ob}}{\d t^{\alpha_1}_{d_1}\cdots\d t^{\alpha_n}_{d_n}}\right|_{t^*_*=0}=0\quad\text{if}\quad \sum d_i\le 2g-1+k.
$$
We consider the expansions $\tOmega^{\alpha,a}_{\obeta,\ob}=\sum_{g\ge 0}\eps^{2g}\tOmega^{\alpha,a}_{\obeta,\ob,g}$ and $\Omega^{\red;\alpha,a}_{\obeta,\ob}=\sum_{g\ge 0}\eps^{2g}\Omega^{\red;\alpha,a}_{\obeta,\ob,g}$.

\item\label{th46-2} We have $\left.\Omega^{\red;\alpha,a}_{\obeta,\ob,g}\right|_{t^*_*=0}=0$ and
\begin{gather*}
\left.\frac{\d^n \Omega^{\red;\alpha,a}_{\obeta,\ob,g}}{\d t^{\alpha_1}_{d_1}\cdots\d t^{\alpha_n}_{d_n}}\right|_{t^*_*=0}=\int_{\oM_{g,n+k+1}}\perm_{k+1,n}^*\left(B^{k+1}_{g,\od}\right)c_{g,n+k+1}\left(e^\alpha\otimes\otimes_{i=1}^k e_{\beta_i}\otimes \otimes_{j=1}^n e_{\alpha_j}\right)\psi_1^a\prod_{i=1}^k\psi_{i+1}^{b_i},
\end{gather*}
where $g\ge 0$, $n\ge 1$, $d_1,\ldots,d_n\ge 0$, $\sum d_i\ge 2g+k$, and $1\le\alpha_1,\ldots,\alpha_n\le N$.

\item\label{th46-3} We have $\left.\tOmega^{\alpha,a}_{\obeta,\ob,g}\right|_{w^*_*=0}=0$ and
\begin{gather*}
\left.\frac{\d^n\tOmega^{\alpha,a}_{\obeta,\ob,g}}{\d w^{\alpha_1}_{d_1}\cdots\d w^{\alpha_n}_{d_n}}\right|_{w^*_*=0}=\int_{\oM_{g,n+k+1}}\perm_{k+1,n}^*\left(B^{k+1}_{g,\od}\right)c_{g,n+k+1}\left(e^\alpha\otimes\otimes_{i=1}^k e_{\beta_i}\otimes \otimes_{j=1}^n e_{\alpha_j}\right)\psi_1^a\prod_{i=1}^k\psi_{i+1}^{b_i},
\end{gather*}
where $g\ge 0$, $n\ge 1$, $d_1,\ldots,d_n\ge 0$, $\sum d_i=2g-1+k$, and $1\le\alpha_1,\ldots,\alpha_n\le N$.
\end{enumerate}
\end{theorem}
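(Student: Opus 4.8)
The plan is to mimic the proof of Theorem~\ref{theorem:geometric formula for reducing Miura transformation} (itself modelled on~\cite{BGR19}), carrying along $k+1$ extra ``frozen'' marked points. Set $G:=\frac{\d^k\mcF^{\alpha,a}}{\d t^{\beta_1}_{b_1}\cdots\d t^{\beta_k}_{b_k}}\in\mbC[[t^*_*,\eps]]$; by definition its $\eps^{2g}$-correlators are the intersection numbers on $\oM_{g,n+k+1}$ of $c_{g,n+k+1}(e^\alpha\otimes\otimes_{i=1}^k e_{\beta_i}\otimes\otimes_{j=1}^n e_{\alpha_j})$ against $\psi_1^a\prod_{i=1}^k\psi_{i+1}^{b_i}\prod_{j=1}^n\psi_{k+1+j}^{d_j}$, so the point $1$ (carrying $e^\alpha,\psi_1^a$) together with the points $2,\dots,k+1$ (carrying $e_{\beta_i},\psi_{i+1}^{b_i}$) are precisely the data of $k+1$ frozen legs. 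This is what makes $B^{k+1}_{g,\od}$ appear, and the permutation $\perm_{k+1,n}$ only relabels these $k+1$ points from the first to the last positions, matching the convention of Definition~\ref{def:Bgdm-MainDefinition}.

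For part~\eqref{th46-1}, uniqueness follows from Lemma~\ref{lemma:wt-change}: two admissible choices of $\tOmega^{\alpha,a}_{\obeta,\ob}$ differ by an element of $\hcA_{w;k-1}$ whose evaluation at $w^\top$ has vanishing correlators for all $\sum d_i\le 2g-1+k$, and by~\eqref{eq:main property of wtop} such a differential polynomial must vanish. For existence I would run the recursive killing procedure of~\eqref{eq:formula for Fl} unchanged: introduce a bijective level function on the monomials $\eps^{2g}\prod t^{\alpha_i}_{d_i}$ with $\sum d_i\le 2g-1+k$ (for instance $l:=g(g-1+k)+\sum d_i$, from which $g=g(l)$ is recovered), set $G^{(-1)}:=G$, and pass from $G^{(l-1)}$ to $G^{(l)}$ by subtracting the level-$l$ terms rewritten in the variables $w^{\alpha_i}_{d_i}-\delta_{d_i,1}A^{\alpha_i}$; property~\eqref{eq:main property of wtop} for the $w^{\top;\alpha}$ of an F-CohFT guarantees that this kills exactly the level-$l$ monomials and leaves lower levels untouched. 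Then $\Omega^{\red;\alpha,a}_{\obeta,\ob}$ is the limit and $\tOmega^{\alpha,a}_{\obeta,\ob}$ is the accumulated subtracted differential polynomial, a priori only in $\hcA_{w;\le k-1}$. To upgrade this to $\hcA_{w;k-1}$, and to get $\tOmega^{\alpha,a}_{\obeta,\ob,g}|_{w^*_*=0}=0$ and $\Omega^{\red;\alpha,a}_{\obeta,\ob,g}|_{t^*_*=0}=0$, I would reuse the dilaton argument from the proof of Theorem~\ref{theorem:geometric formula for reducing Miura transformation}: with $L$ as in~\eqref{eq:Loperator} one has $[L,\d/\d t^\beta_b]=\d/\d t^\beta_b$ and $Lw^{\top;\gamma}_c=c\,w^{\top;\gamma}_c$, so applying $L$ to $G=\Omega^{\red;\alpha,a}_{\obeta,\ob}+\tOmega^{\alpha,a}_{\obeta,\ob}|_{w=w^\top}$ and using~\eqref{eq:dilaton for F-CohFT} to compute $(L-k)G$, a minimal $g_0$ for which $\tOmega^{\alpha,a}_{\obeta,\ob,g_0}$ fails to be homogeneous of degree $2g_0-1+k$ would contradict the order-of-vanishing count in the ideals $\mbC[[t^*_*]]^{(d)}$.

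Parts~\eqref{th46-2} and~\eqref{th46-3} are the combinatorial heart, and here I would argue exactly as in~\cite[Proposition~3.5 and Lemma~3.6]{BGR19}. One expands the $\eps^{2g}$-correlators of $G$, and of the intermediate series $G^{(l)}$, into sums over boundary strata by iterating the F-CohFT string equation~\eqref{eq:string for F-CohFT}. The essential observation is that~\eqref{eq:string for F-CohFT} only eliminates derivatives of ``$t^\un_0$-type'', i.e.\ it only moves the regular marked points and always keeps the points $1,\dots,k+1$ attached to the root; hence the trees that occur are exactly the stable rooted trees in $\SRT_{g,n,k+1;\circ}$ with $k+1$ frozen legs, the recursive subtractions of part~\eqref{th46-1} together with the admissibility inequalities~\eqref{eq:admissibility-SRT} restrict them to $\SRT^{(b,c,a)}_{g,n,k+1;\circ}$, and the alternating sum over compatible level functions produces the sign $(-1)^{\deg(T)-1}$ of Definition~\ref{def:Bgdm-MainDefinition}. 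For $\sum d_i\ge 2g+k$ this identifies the correlators of $\Omega^{\red;\alpha,a}_{\obeta,\ob}=\lim_l G^{(l)}$ with $\int_{\oM_{g,n+k+1}}\perm_{k+1,n}^*(B^{k+1}_{g,\od})\,c_{g,n+k+1}(\cdots)\psi_1^a\prod_i\psi_{i+1}^{b_i}$, which is part~\eqref{th46-2}. For $\sum d_i=2g-1+k$, homogeneity of $\tOmega^{\alpha,a}_{\obeta,\ob,g}$ in $\cA_{w;2g-1+k}$ shows that only the subtraction at the last level with $g(l)=g$ contributes to it, so $\frac{\d^n\tOmega^{\alpha,a}_{\obeta,\ob,g}}{\d w^{\alpha_1}_{d_1}\cdots\d w^{\alpha_n}_{d_n}}\big|_{w^*_*=0}$ equals the corresponding $\eps^{2g}$-correlator of the intermediate series obtained right before killing the degree-$(2g-1+k)$ genus-$g$ part, which the same tree count again identifies with the stated intersection number, giving part~\eqref{th46-3}. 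I would invoke the combinatorial identities of Section~\ref{section:equivalent formulation} (Theorem~\ref{theorem:arbitrary m reformulation}, Proposition~\ref{prop36}) and the vanishing statement of~\cite[Lemma~3.6]{BGR19} to make this bookkeeping rigorous.

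The main obstacle is precisely this last combinatorial matching: one must verify that, when the $k+1$ frozen legs are carried through the recursion, the resulting weighted count of level structures is exactly the one defining $B^{k+1}_{g,\od}$, and that the completeness and admissibility conditions stay compatible with the \emph{F-CohFT} string equation~\eqref{eq:string for F-CohFT} (which moves a frozen $\psi_1^a$ through the recursion term $\mcF^{\alpha,a-1}$ rather than through a metric, unlike the CohFT case in~\cite{BGR19}). The degree bound via~\eqref{eq:dilaton for F-CohFT} is a secondary technical point, slightly more delicate than in the partial-CohFT case because the source term $-\mcF$ of the dilaton equation is not a constant.
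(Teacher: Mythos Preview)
Your approach is essentially the same as the paper's, which itself is just a terse ``by analogy with \cite[Proposition~7.2]{BDGR18}, \cite[Proposition~3.5]{BGR19}, and Theorem~\ref{theorem:geometric formula for reducing Miura transformation}''. You have correctly identified the three ingredients (recursive killing for existence, the dilaton operator for the degree bound, and the tree count of~\cite{BGR19} for the geometric formulas) and how the $k+1$ frozen legs enter.

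Two small corrections. First, the dilaton identity you need is $(L-(k-1))G=0$, not $(L-k)G$: from~\eqref{eq:dilaton for F-CohFT} one has $L\mcF^{\alpha,a}=-\mcF^{\alpha,a}$ (the ``$-\mcF$'' there should be read as $-\mcF^{\alpha,a}$; there is no single potential $\mcF$ for an F-CohFT), and then $[L,\d/\d t^\beta_b]=\d/\d t^\beta_b$ gives $LG=(k-1)G$ on the nose, so there is no problematic source term. Second, the mechanism that keeps the $k+1$ distinguished points at the root in the tree expansion is not really the string equation~\eqref{eq:string for F-CohFT} but axiom~(iii) of the F-CohFT: when a class $c_{g,n+1}(e^\alpha\otimes\cdots)$ is restricted to a boundary divisor, the covector slot $e^\alpha$ always lands on the component containing marked point~$1$, and since the $t^{\beta_i}_{b_i}$-derivatives are taken \emph{before} the $w^\top$-substitution, their marked points are never separated from it either. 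With these tweaks, your sketch matches the paper's intended argument.
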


\begin{proof}
\eqref{th46-1}~ The result is analogous to~\cite[Proposition~7.2]{BDGR18}, whose proof we briefly explained in the proof of Theorem~\ref{theorem:geometric formula for reducing Miura transformation}. Regarding the existence part, we construct the formal power series~$\Omega^{\red;\alpha,a}_{\obeta,\ob}$, together with the differential polynomial $\tOmega^{\alpha,a}_{\obeta,\ob}$, by a recursive procedure that kills all of the monomials $\eps^{2g}\prod t^{\alpha_i}_{d_i}$ with $g\ge 0$ and $\sum d_i\le 2g-1+k$ in~$\frac{\d^k\mcF^{\alpha,a}}{\d t^{\beta_1}_{b_1}\cdots\d t^{\beta_k}_{b_k}}$. One then checks that the resulting differential polynomial $\tOmega^{\alpha,a}_{\obeta,\ob}$ has degree $k-1$ using the equation $L\frac{\d^k\mcF^{\alpha,a}}{\d t^{\beta_1}_{b_1}\cdots\d t^{\beta_k}_{b_k}}=(k-1)\frac{\d^k\mcF^{\alpha,a}}{\d t^{\beta_1}_{b_1}\cdots\d t^{\beta_k}_{b_k}}$ (where the operator $L$ was defined in~\eqref{eq:Loperator}), which is a consequence of Equation~\eqref{eq:dilaton for F-CohFT}, and  property~\eqref{eq:main property of wtop}, which, as we already remarked, holds for F-CohFTs. The uniqueness part follows again from~\eqref{eq:main property of wtop}.

\eqref{th46-2}~ The result is analogous to~\cite[Proposition~3.5]{BGR19}, and the proof is obtained from the proof of that proposition by easily seen adjustments.

\eqref{th46-3}~ This is analogous to Theorem~\ref{theorem:geometric formula for reducing Miura transformation}.
\end{proof}

\subsection{The DZ hierarchy for an F-CohFT and Conjecture~\ref{conjecture1}}

We consider an arbitrary F-CohFT. 

\subsubsection{Construction of the DZ hierarchy}

Denote by $\cA^\wk_w$ the ring of formal power series in the shifted variables $(w^\alpha_n-A^\alpha\delta_{n,1})$, and let $\hcA^\wk_w:=\cA^\wk_w[[\eps]]$. We have the obvious inclusion $\hcA_w\subset\hcA^\wk_w$. From  property~\eqref{eq:main property of wtop}, it follows that for any $1\le\alpha\le N$, $a\ge 0$, and $k$-tuples $\obeta\in\llbracket N\rrbracket^k$ and $\ob\in\mbZ_{\ge 0}^k$, there exists a unique element $\Omega^{\alpha,a}_{\obeta,\ob}\in\hcA^\wk_w$ such that
$$
\frac{\d^k\mcF^{\alpha,a}}{\d t^{\beta_1}_{b_1}\cdots\d t^{\beta_k}_{b_k}}=\left.\Omega^{\alpha,a}_{\obeta,\ob}\right|_{w^\gamma_c=w^{\top;\gamma}_c}.
$$
Clearly, we have $\frac{\d w^{\top;\alpha}}{\d t^\beta_b}=\left.\d_x\Omega^{\alpha,0}_{\beta,b}\right|_{w^\gamma_c=w^{\top;\gamma}_c}$, which implies that the $N$-tuple of formal powers series $w^{\top;\alpha}$ satisfies the system of generalized PDEs
\begin{gather}\label{eq:DZ for F-CohFT}
\frac{\d w^\alpha}{\d t^\beta_b}=\d_x\Omega^{\alpha,0}_{\beta,b},\quad 1\le\alpha,\beta\le N,\, b\ge 0,
\end{gather}
which we call the \emph{Dubrovin--Zhang} (\emph{DZ}) \emph{hierarchy} associated to our F-CohFT. We say ``generalized PDEs'' because the right-hand sides are not differential polynomials but elements of the larger ring $\hcA^\wk_w$. The $N$-tuple $\ow^{\,\top}:=(w^{\top;1},\ldots,w^{\top;N})$ is clearly a solution of the DZ hierarchy, which is called the \emph{topological solution}.

\subsubsection{Conjecture~\ref{conjecture1} and the polynomiality of the DZ hierarchy}\label{subsec:poly}

From Theorem~\ref{theorem:reducing transformation for F-CohFT}, it follows that the validity of Conjecture~\ref{conjecture1} for some fixed $m\ge 2$ implies that $\Omega^{\red;\alpha,a}_{\obeta,\ob}=0$ if $l(\obeta)=l(\ob)=m-1$, or equivalently $\Omega^{\alpha,a}_{\obeta,\ob}=\tOmega^{\alpha,a}_{\obeta,\ob}\in\hcA_{w;m-2}$. In particular, we obtain the following result.

\begin{theorem}\label{theorem:implication of conjecture1}
The validity of Conjecture~\ref{conjecture1} for $m=2$ implies that the right-hand sides of the equations of the DZ hierarchy~\eqref{eq:DZ for F-CohFT} associated to an arbitrary F-CohFT are differential polynomials of degree~$1$. Moreover, part~\eqref{th46-3} of Theorem~\ref{theorem:reducing transformation for F-CohFT} then gives a geometric formula for these differential polynomials.
\end{theorem}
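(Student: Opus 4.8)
The plan is to deduce Theorem~\ref{theorem:implication of conjecture1} almost formally from the machinery already set up, in particular from Theorem~\ref{theorem:reducing transformation for F-CohFT}, so the proof should be short. First I would recall the definitions: by construction, the right-hand side of the DZ hierarchy~\eqref{eq:DZ for F-CohFT} is $\partial_x\Omega^{\alpha,0}_{\beta,b}$, where $\Omega^{\alpha,0}_{\beta,b}\in\hcA^\wk_w$ is the unique element of the (a priori larger) ring $\hcA^\wk_w$ satisfying $\frac{\partial\mcF^{\alpha,0}}{\partial t^\beta_b}=\Omega^{\alpha,0}_{\beta,b}\big|_{w^\gamma_c=w^{\top;\gamma}_c}$. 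Here the relevant case is $k=1$, i.e. $\obeta=(\beta)$ and $\ob=(b)$, so $l(\obeta)=l(\ob)=1=m-1$ with $m=2$. Theorem~\ref{theorem:reducing transformation for F-CohFT}\eqref{th46-1} then produces a differential polynomial $\tOmega^{\alpha,0}_{\beta,b}\in\hcA_{w;0}$ and a series $\Omega^{\red;\alpha,0}_{\beta,b}$ with $\frac{\partial\mcF^{\alpha,0}}{\partial t^\beta_b}=\Omega^{\red;\alpha,0}_{\beta,b}+\tOmega^{\alpha,0}_{\beta,b}\big|_{w^\gamma_c=w^{\top;\gamma}_c}$ and the vanishing of all low-degree $t$-derivatives of $\Omega^{\red;\alpha,0}_{\beta,b}$.

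Next I would invoke Conjecture~\ref{conjecture1} for $m=2$: by part~\eqref{th46-2} of Theorem~\ref{theorem:reducing transformation for F-CohFT}, the $t^*_*=0$ derivatives of $\Omega^{\red;\alpha,0}_{\beta,b}$ in the range $\sum d_i\ge 2g+1$ are given by integrals of $\perm_{2,n}^*(B^2_{g,\od})$ against the F-CohFT classes; since Conjecture~\ref{conjecture1} asserts $B^2_{g,\od}=0$ precisely for $\sum d_i\ge 2g+1$, all these coefficients vanish. Combined with the defining vanishing property of $\Omega^{\red;\alpha,0}_{\beta,b}$ in the complementary range $\sum d_i\le 2g+1$, every coefficient $\Coef_{\eps^{2g}}\partial^n_{t}\Omega^{\red;\alpha,0}_{\beta,b}\big|_{t^*_*=0}$ vanishes, and $\Omega^{\red;\alpha,0}_{\beta,b}|_{t^*_*=0}=0$ as well; hence $\Omega^{\red;\alpha,0}_{\beta,b}=0$ identically. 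Therefore $\frac{\partial\mcF^{\alpha,0}}{\partial t^\beta_b}=\tOmega^{\alpha,0}_{\beta,b}\big|_{w^\gamma_c=w^{\top;\gamma}_c}$ with $\tOmega^{\alpha,0}_{\beta,b}\in\hcA_{w;0}$ an honest differential polynomial of degree $0$. By the uniqueness of $\Omega^{\alpha,0}_{\beta,b}\in\hcA^\wk_w$ (which follows from property~\eqref{eq:main property of wtop} via Lemma~\ref{lemma:wt-change}, applied after embedding $\hcA_w\subset\hcA^\wk_w$), we conclude $\Omega^{\alpha,0}_{\beta,b}=\tOmega^{\alpha,0}_{\beta,b}\in\hcA_{w;0}$, so $\partial_x\Omega^{\alpha,0}_{\beta,b}\in\hcA_{w;1}$ is a differential polynomial of degree $1$, which is exactly the claimed polynomiality.

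Finally, for the ``moreover'' clause I would simply note that part~\eqref{th46-3} of Theorem~\ref{theorem:reducing transformation for F-CohFT}, applied with $k=1$, expresses the coefficients $\partial^n_{w}\tOmega^{\alpha,0}_{\beta,b,g}\big|_{w^*_*=0}$ (for $\sum d_i=2g$) as intersection numbers of $\perm_{2,n}^*(B^2_{g,\od})$ with the F-CohFT, and since $\tOmega^{\alpha,0}_{\beta,b}\in\hcA_{w;0}$ is homogeneous of degree $0$ these coefficients — together with $\tOmega^{\alpha,0}_{\beta,b}|_{w^*_*=0}=0$ — determine $\tOmega^{\alpha,0}_{\beta,b}$ completely; applying $\partial_x$ gives the geometric formula for the right-hand sides. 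The only genuine point requiring care is the passage from the ring $\hcA^\wk_w$ of formal power series in the shifted variables to the polynomial ring $\hcA_w$: one must check that the differential polynomial produced by Theorem~\ref{theorem:reducing transformation for F-CohFT}, when substituted into $w^{\top}$, agrees with $\Omega^{\alpha,0}_{\beta,b}$, and then conclude equality in $\hcA^\wk_w$ by uniqueness; this is where I expect the main (though still mild) obstacle to lie, and it is resolved precisely by Lemma~\ref{lemma:wt-change}.
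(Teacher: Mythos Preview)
Your proof is correct and follows essentially the same approach as the paper: the paper's argument is the single paragraph preceding the theorem statement, which observes that Theorem~\ref{theorem:reducing transformation for F-CohFT} together with Conjecture~\ref{conjecture1} for $m=2$ forces $\Omega^{\red;\alpha,a}_{\obeta,\ob}=0$ when $l(\obeta)=l(\ob)=1$, and hence $\Omega^{\alpha,a}_{\obeta,\ob}=\tOmega^{\alpha,a}_{\obeta,\ob}\in\hcA_{w;0}$. One small slip: the defining vanishing range from part~\eqref{th46-1} is $\sum d_i\le 2g-1+k=2g$, not $\sum d_i\le 2g+1$; the two ranges $\sum d_i\le 2g$ and $\sum d_i\ge 2g+1$ are genuinely complementary, so your conclusion is unaffected.
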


Also note that, assuming the validity of Conjecture~\ref{conjecture1} for $m=2$, we have
\begin{align*}
&\frac{\d}{\d t^\gamma_c}\frac{\d w^{\top;\alpha}}{\d t^\beta_b}=\left.\sum_{n\ge 0}\frac{\d\left(\d_x\Omega^{\alpha,0}_{\beta,b}\right)}{\d w^\mu_n}\d_x^{n+1}\Omega^{\mu,0}_{\gamma,c}\right|_{w^\delta_d=w^{\top;\delta}_d}\\
&\hspace{0.8cm}\veq\\
&\frac{\d}{\d t^\beta_b}\frac{\d w^{\top;\alpha}}{\d t^\gamma_c}=\left.\sum_{n\ge 0}\frac{\d\left(\d_x\Omega^{\alpha,0}_{\gamma,c}\right)}{\d w^\mu_n}\d_x^{n+1}\Omega^{\mu,0}_{\beta,b}\right|_{w^\delta_d=w^{\top;\delta}_d},
\end{align*}
which, by Lemma~\ref{lemma:wt-change}, implies that the flows of the DZ hierarchy commute pairwise.

\subsection{The DR hierarchy for an F-CohFT and Conjectures~\ref{conjecture2} and~\ref{conjecture3}}

\subsubsection{The definition of the DR hierarchy}

Consider an arbitrary F-CohFT of rank $N$. Let $u^1,\ldots,u^N$ be formal variables, and consider the associated ring of differential polynomials $\hcA_u$. Define differential polynomials $P^\alpha_{\beta,d}\in\hcA_{u;0}$, $1\le\alpha,\beta\le N$, $d\ge 0$, by
\begin{align*}
P^\alpha_{\beta,d}\coloneqq & \sum_{\substack{g,n\geq 0,\,2g+n>0\\k_1,\ldots,k_n\geq 0\\\sum_{j=1}^n k_j=2g}} \frac{\eps^{2g}}{n!}
\prod_{j=1}^n u^{\alpha_j}_{k_j} 
\\ \notag & \hphantom{\sum_{g,n\geq 0},} \times\Coef_{(a_1)^{k_1}\cdots(a_n)^{k_n}} \left(\int_{\oM_{g,n+2}}
\hspace{-0.5cm}
\lambda_g
{\DR_g\bigg(-\sum_{j=1}^n a_j,0,a_1,\ldots,a_n\bigg)}  \psi_2^d c_{g,n+2}(e^\alpha\otimes e_\beta\otimes \otimes_{j=1}^n e_{\alpha_j}) \right).
\end{align*}
The \emph{DR hierarchy}, see~\cite{BR21}, is the following system of evolutionary PDEs:
\begin{gather*}
\frac{\d u^\alpha}{\d t^\beta_d}=\d_x P^\alpha_{\beta,d},\quad 1\le \alpha,\beta\le N,\; d\ge 0.
\end{gather*}
In~\cite[Theorem~5.1]{BR21}, the authors proved that all of the equations of the DR hierarchy are compatible with each other. 

In~\cite[Theorem 1.5]{ABLR21}, the authors proved that
\begin{gather}\label{eq:string for P}
  \frac{\d P^\alpha_{\beta,d}}{\d u^{\un}}=
  \begin{cases}
    P^\alpha_{\beta,d-1}&\text{if $d\ge 1$},\\
    \delta^\alpha_\beta&\text{if $d=0$}.
  \end{cases}
\end{gather}

\begin{lemma}\label{lem48}
The DR hierarchy satisfies the following properties.
\begin{enumerate}
\item\label{lem48-1} $\displaystyle\frac{\d P^\alpha_{\beta,0}}{\d u^{\un}_x}=0$.

\item\label{lem48-2} The DR hierarchy has a unique solution $\ou^\str=(u^{\str;1},\ldots,u^{\str;N})$ satisfying the condition $\left.u^{\str;\alpha}\right|_{t^*_{\ge 1}=0}=t^\alpha_0$, where we identify the derivatives~$\d_x$ and~$\frac{\d}{\d t^\un_0}$.
\end{enumerate}
\end{lemma}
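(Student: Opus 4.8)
The plan is to treat the two parts separately, and in both cases to reduce to facts already recorded above. For part~\eqref{lem48-1}, the idea is to turn the derivative $\d/\d u^{\un}_x$ into an application of the divisibility property~\eqref{eq:divisibility of DR}. Writing $\d/\d u^{\un}_x=A^\mu\,\d/\d u^{\mu}_1$, only the terms of the defining sum for $P^\alpha_{\beta,0}$ that carry a factor $u^{\alpha_j}_{k_j}$ with $k_j=1$ survive this derivative, and since $\sum_j k_j=2g$ such terms occur only for $g\ge 1$. Collecting them (using the $S_n$-symmetry of the integrand) and contracting the freed insertion against $A^\mu$, I expect to reduce, for each $g\ge 1$ and $n\ge 1$, to the coefficient of $a_1^1$ in $\int_{\oM_{g,n+2}}\lambda_g\DR_g(-\sum_j a_j,0,a_1,\ldots,a_n)\,c_{g,n+2}(e^\alpha\otimes e_\beta\otimes e\otimes_{j\ge 2}e_{\alpha_j})$, where $e$ is the unit. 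The unit and $S_n$-equivariance axioms of the F-CohFT then allow me to move the unit insertion to the last marked point and delete it via the forgetful map; the projection formula rewrites the integral as $\int_{\oM_{g,n+1}}\pi_*\big(\lambda_g\DR_g(-\sum_j a_j,0,a_2,\ldots,a_n,a_1)\big)\,c_{g,n+1}(\cdots)$, where $\pi$ forgets the point of weight $a_1$. Since the weights of the remaining points sum to $-a_1$, property~\eqref{eq:divisibility of DR} shows this pushforward is divisible by $a_1^2$, so its coefficient of $a_1^1$ vanishes, which is exactly $\d P^\alpha_{\beta,0}/\d u^{\un}_x=0$.

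For the uniqueness half of part~\eqref{lem48-2}, I would filter $\mbC[[t^*_*,\eps]]$ by the function $\ord(f)$ equal to the minimal total degree in the variables $\{t^{\gamma}_k:k\ge 1\}$ among the monomials of $f$. Given two solutions $\ou,\ou'$ with components $u^\alpha,(u')^\alpha$ agreeing on $\{t^*_{\ge 1}=0\}$, set $v^\alpha\coloneqq u^\alpha-(u')^\alpha$; then $\ord(v^\alpha)\ge 1$, and $v^\alpha$ vanishes exactly when $\ord(v^\alpha)=\infty$. If $r\coloneqq\min_\alpha\ord(v^\alpha)$ were finite, I would pick $\alpha$, a monomial $\mu$ of degree exactly $r$ in $v^\alpha$, and a factor $t^{\gamma}_k$ of $\mu$ with $k\ge 1$: then $\d v^\alpha/\d t^{\gamma}_k$ contains $\mu/t^{\gamma}_k$, of degree $r-1$, with nonzero coefficient, whereas $\d v^\alpha/\d t^{\gamma}_k=\d_x\big(P^\alpha_{\gamma,k}|_{\ou}-P^\alpha_{\gamma,k}|_{\ou'}\big)$ has $\ord\ge r$, because $P^\alpha_{\gamma,k}$ is a differential polynomial, $\d_x=\d/\d t^{\un}_0$ preserves $\ord$, and a telescoping expansion of the difference in the factors $\d_x^j v^\beta$ shows each resulting term has $\ord\ge\min_{\beta}\ord(v^\beta)=r$. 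This contradiction forces every $v^\alpha=0$.

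For the existence half of part~\eqref{lem48-2}, I would prescribe $u^\alpha=t^\alpha_0$ on $\{t^*_{\ge 1}=0\}$ — along this locus the only nonzero jets are $u^{\gamma}_0=t^{\gamma}_0$ and $u^{\gamma}_1=A^{\gamma}$, and~\eqref{eq:string for P} makes the $d=0$ equations $\d u^\alpha/\d t^\beta_0=\d_x P^\alpha_{\beta,0}$ hold there — and then integrate the pairwise commuting flows $\d/\d t^\beta_d$, $d\ge 1$ (compatibility is \cite[Theorem~5.1]{BR21}), to extend $u^\alpha$ to all of $\mbC[[t^*_*,\eps]]$; commutativity makes the extension well defined and makes the remaining equations propagate off the locus, and the resulting $\ou^{\str}$ satisfies $u^{\str;\alpha}|_{t^*_{\ge 1}=0}=t^\alpha_0$. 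Alternatively, existence may simply be quoted from the known construction of the string solution of the DR hierarchy. I expect the main obstacle to be, in part~\eqref{lem48-1}, the bookkeeping required to arrange the ramification weights so that~\eqref{eq:divisibility of DR} applies with respect to the correct variable (the forgotten point must carry precisely the weight whose square divides the pushforward); in part~\eqref{lem48-2} the only subtle point is justifying the integration step of the existence argument in the formal, infinite-dimensional setting, the uniqueness part being routine.
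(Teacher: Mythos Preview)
Your proof of part~\eqref{lem48-1} is correct and is exactly what the paper means by ``follows immediately from~\eqref{eq:divisibility of DR} and the definition'': you have simply written out the bookkeeping that the paper leaves to the reader. The manipulation moving the unit insertion to the last slot via $S_n$-equivariance, applying the unit axiom $c_{g,n+2}(\cdots\otimes e)=\pi^*c_{g,n+1}(\cdots)$, and then invoking~\eqref{eq:divisibility of DR} for the forgotten point of weight~$a_1$ is exactly right.

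For part~\eqref{lem48-2} your approach is correct but genuinely different from the paper's. The paper's entire proof is the single computation
\[
\frac{\d}{\d u^{\un}_x}\,\d_x P^\alpha_{\beta,0}
=\d_x\frac{\d P^\alpha_{\beta,0}}{\d u^{\un}_x}+\frac{\d P^\alpha_{\beta,0}}{\d u^{\un}}
=0+\delta^\alpha_\beta,
\]
using the commutator identity $[\d/\d u^{\un}_x,\d_x]=\d/\d u^{\un}$, part~\eqref{lem48-1}, and~\eqref{eq:string for P}; it then declares the proof complete, implicitly invoking a standard existence/uniqueness principle for compatible systems of evolutionary PDEs whose $t^\beta_0$-flows have this nondegeneracy property. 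Your route is more self-contained: the filtration argument for uniqueness (order in the variables $t^\gamma_k$ with $k\ge1$) and the flow-integration argument for existence work exactly as you describe, and your verification via~\eqref{eq:string for P} that the $d=0$ equations hold on the initial locus, together with propagation by commutativity, is the honest content behind the paper's one-liner. What you gain is transparency; what the paper gains is brevity by packaging all of this into a single derivative identity and an appeal to folklore. Your acknowledged worry about the ``infinite-dimensional integration step'' is harmless in the formal power series setting: the Taylor coefficients in $\{t^\gamma_k:k\ge1\}$ are determined recursively by degree, each step being a finite computation, and commutativity of the flows guarantees consistency.
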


\begin{proof}
\eqref{lem48-1}~ This follows immediately from~\eqref{eq:divisibility of DR} and the definition of $P^{\alpha}_{\beta,0}$.

\eqref{lem48-2}~ Using Equation~\eqref{eq:string for P} and part~\eqref{lem48-1}, we compute
$$
\frac{\d}{\d u^\un_x}\d_x P^\alpha_{\beta,0}=\d_x\frac{\d P^\alpha_{\beta,0}}{\d u^\un_x}+\frac{\d P^\alpha_{\beta,0}}{\d u^\un}=\delta^\alpha_\beta,
$$
which completes the proof.
\end{proof}

\subsubsection{A collection of potentials associated to the DR hierarchy}

Define $u^{\str;\alpha}_n:=\frac{\d^n u^{\str;\alpha}}{(\d t^\un_0)^n}$. Let us introduce~$N$ formal power series $\mcF^{\DR;\alpha}\in\mbC[[t^*_*,\eps]]$, $1\le\alpha\le N$, by the relation
$$
\frac{\d\mcF^{\DR;\alpha}}{\d t^\beta_b}:=\left.P^\alpha_{\beta,b}\right|_{u^\gamma_n=u^{\str;\gamma}_n},
$$
with the constant terms defined to be equal to zero, $\left.\mcF^{\DR;\alpha}\right|_{t^*_*=0}:=0$. Consider the expansion $\mcF^{\DR;\alpha}=\sum_{g\ge 0}\eps^{2g}\mcF^{\DR;\alpha}_g$.

\begin{theorem}\label{theorem:DR correlators for F-CohFT}
Let $g\ge 0$, $n\ge 1$, $d_1,\ldots,d_n\ge 0$, and $1\le\alpha,\alpha_1,\ldots,\alpha_n\le N$. Then we have
$$
\left.\frac{\d^n\mcF^{\DR;\alpha}_g}{\d t^{\alpha_1}_{d_1}\cdots\d t^{\alpha_n}_{d_n}}\right|_{t^*_*=0}=
\begin{cases}
0 & \text{if\, $\sum d_i\le 2g-1$},\\
\int_{\oM_{g,n+1}}\perm_{1,n}^*\left(A^1_{g,\od}\right)c_{g,n+1}\left(e^\alpha\otimes\otimes_{j=1}^n e_{\alpha_j}\right)&\text{if\, $\sum d_i\ge 2g$}.
\end{cases}
$$
\end{theorem}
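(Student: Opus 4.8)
The plan is to compute the Taylor coefficients $\left.\frac{\d^n\mcF^{\DR;\alpha}_g}{\d t^{\alpha_1}_{d_1}\cdots\d t^{\alpha_n}_{d_n}}\right|_{t^*_*=0}$ directly from the defining relation $\frac{\d\mcF^{\DR;\alpha}}{\d t^\beta_b}=\left.P^\alpha_{\beta,b}\right|_{u^\gamma_n=u^{\str;\gamma}_n}$. Writing the correlator as $\left.\frac{\d^{n-1}}{\d t^{\alpha_2}_{d_2}\cdots\d t^{\alpha_n}_{d_n}}\left(\left.P^\alpha_{\alpha_1,d_1}\right|_{u^\gamma_n=u^{\str;\gamma}_n}\right)\right|_{t^*_*=0}$, one repeatedly applies the chain rule: a derivative $\frac{\d}{\d t^{\alpha_j}_{d_j}}$ either meets an explicit jet variable $u^{\str;\gamma}_k$, contributing $\frac{\d u^{\str;\gamma}_k}{\d t^{\alpha_j}_{d_j}}=\left.\d_x^{k+1}P^{\gamma}_{\alpha_j,d_j}\right|_{u^\delta_m=u^{\str;\delta}_m}$ and opening a new $P$-vertex, or it is absorbed into the part of $u^{\str;\gamma}$ lying in the ideal generated by the $t^\alpha_a$ with $a\ge 1$ (recall $\left.u^{\str;\gamma}\right|_{t^*_{\ge 1}=0}=t^\gamma_0$ by Lemma~\ref{lem48}, part~\eqref{lem48-2}, which, exactly as in Lemma~\ref{lemma:wt-change}, makes the bookkeeping of these coefficients unambiguous). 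Organising this recursion by the combinatorial type of the nested substitutions expresses the correlator as a finite sum over decorated rooted trees: each vertex $v$ contributes an intersection number on $\oM_{g(v),n(v)+2}$ of $\lambda_{g(v)}$, a double ramification cycle, a power of $\psi$ at the ``second'' point, and the corresponding component of the F-CohFT; each internal edge $h$ carries a weight $a(h)$ produced by the $\d_x$'s and the coefficient extractions $\Coef_{a^\bullet}$ in the definition of $P^\alpha_{\beta,d}$; and the symmetrisations, together with the string-equation coefficients in~\eqref{eq:Str1}, produce rational factors which, after resummation, collapse to $C(T)=\prod_{v\in V(T)} r(v)\big/\sum_{\tv\in\Desc[v]}r(\tv)$.

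It then remains to match this tree sum, term by term, with the definition of $A^1_{g,\od}=\Coef_{a_1^{d_1}\cdots a_n^{d_n}}\check{A}_g^{\sum d_i-2g+1}\left(a_1,\ldots,a_n,-\sum a_i\right)$, where $\check{A}_g^k=\sum_{T}C(T)\lambda_g\DR_T$. The weight $-\sum a_j$ carried by the $e^\alpha$-insertion of the root $P$-vertex accounts for the substitution $a_{n+1}=-\sum a_i$ at the frozen (root) leg, while the divisibility property~\eqref{eq:divisibility of DR} is what guarantees that the forgetful pushforwards occurring along the recursion are well defined; the weight-zero ``second'' points, decorated by powers of $\psi$, are traded — using the string equation~\eqref{eq:string for P} for the $P^\alpha_{\beta,d}$ together with the behaviour of the double ramification cycle under adding a marked point of weight zero — for the monomial extractions $\Coef_{a_j^{d_j}}$ at the corresponding regular legs of $T$; and the relabelling that moves the $e^\alpha$-insertion, which sits at the first marked point in $P^\alpha_{\beta,d}$, to the last marked point used in $\check{A}_g^k$ is precisely $\perm_{1,n}$. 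Finally, since $\lambda_g\DR_T$ is homogeneous of degree $2g+k-1$ in the variables $a_i$ and lies in $R^{2g+k-1}(\oM_{g,n+1})$, the total $a$-degree of the tree expression is at least $2g$; hence for $\sum d_i\le 2g-1$ the coefficient of $a_1^{d_1}\cdots a_n^{d_n}$ vanishes, giving the first case, while for $\sum d_i\ge 2g$ only $k=\sum d_i-2g+1\ge 1$ survives and the resummed expression equals $\int_{\oM_{g,n+1}}\perm_{1,n}^*\left(A^1_{g,\od}\right)c_{g,n+1}\left(e^\alpha\otimes\otimes_{j=1}^n e_{\alpha_j}\right)$. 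The identity $\left.\mcF^{\DR;\alpha}_g\right|_{t^*_*=0}=0$ holds by the normalisation $\left.\mcF^{\DR;\alpha}\right|_{t^*_*=0}=0$ built into the definition of $\mcF^{\DR;\alpha}$.

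The main obstacle is the combinatorial core of the first paragraph, and two points there require genuine work. First, one must verify that the vertex-by-vertex double ramification cycles glue correctly along the tree, i.e. that $\prod_{v}\lambda_{g(v)}\DR_{g(v)}\!\left(A_{H[v]}\right)$ pushed forward along the boundary gluing maps reproduces $\lambda_g\,\xi_{T*}\!\left(\prod_{v}\DR_{g(v)}\!\left(A_{H[v]}\right)\right)$, so that the product of vertex intersection numbers genuinely assembles into a single integral over $\oM_{g,n+1}$ against $\DR_T$; this relies on the splitting behaviour of $\lambda_g$-twisted DR cycles under the relevant gluing maps and on the compatibility of the Hodge bundle with the boundary decomposition. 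Second, isolating the coefficients $C(T)$ is delicate: the factor $r(v)\big/\sum_{\tv\in\Desc[v]}r(\tv)$ must be shown to emerge from an inductive count, over the subtree rooted at $v$, of the admissible ways of distributing the iterated $\d_x$-derivatives and of the symmetrisations, weighted by the dilaton scaling of $u^{\str}$ (the operator of~\eqref{eq:Loperator} read off on the DR side). Both steps run parallel to the corresponding parts of the arguments of~\cite{BR21,ABLR21} and to the proof of Theorem~\ref{theorem:geometric formula for reducing Miura transformation}; modulo them, what remains is bookkeeping. An alternative, possibly shorter route would be to prove that both sides satisfy the same string-type recursion in the $d_i$ coming from~\eqref{eq:string for P}, both vanish for $\sum d_i\le 2g-1$, and agree in a base case; but this still requires the geometric input above to identify the base case with a DR-tree integral.
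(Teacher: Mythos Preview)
Your overall strategy---expand the iterated $t$-derivatives of $\mcF^{\DR;\alpha}$ via the chain rule, organise the result as a sum over rooted trees with DR-decorated vertices, and match with $A^1_{g,\od}$---is exactly the paper's approach. The paper writes the correlator as $\Coef_{\eps^{2g}}\bigl(D_{\d_x\oP_{\alpha_n,d_n}}\cdots D_{\d_x\oP_{\alpha_2,d_2}}P^{\alpha}_{\alpha_1,d_1}\bigr)\big|_{u^\gamma_k=\delta_{k,1}A^\gamma}$, which is your chain-rule expansion evaluated at the initial condition of $u^{\str}$.

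Where your sketch diverges from the paper is in the intermediate combinatorics, and here the details you supply are not quite right. The paper does \emph{not} obtain $C(T)$ directly from ``symmetrisations together with string-equation coefficients'' or from any dilaton scaling via the operator~\eqref{eq:Loperator}; that operator belongs to the DZ side and plays no role here. Instead, the chain-rule expansion first produces a sum over trees $T\in\SRT^n_{g,2g+n-1,1}$ equipped with \emph{injective} level functions $l$, with $2g+n-1$ auxiliary marked points carrying weights $a_1,\ldots,a_{2g+n-1}$, and one extracts the coefficient of $a_1\cdots a_{2g+n-1}$. The two cases are then handled separately: for $\sum d_i\le 2g-1$ the vanishing is not a bare degree count on $\lambda_g\DR_T$ (you have not yet reached that expression) but follows from the argument of~\cite[Lemma~6.11]{BDGR18}; for $\sum d_i\ge 2g$ one passes to a second intermediate form indexed by \emph{special} trees $T\in\SRT^{(s)}_{g,n,1}$ (those in which every vertex carries a regular leg and the minimal regular-leg index increases along descendant chains), and only then does a cohomological identity, namely~\cite[Equation~(6.13)]{BDGR20}, convert this into the $C(T)$-weighted sum $\check A^k_g$. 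Your pointers to~\cite{BR21,ABLR21} and to Theorem~\ref{theorem:geometric formula for reducing Miura transformation} are misdirected: the relevant arguments live in~\cite{BDGR18,BDGR20}, and the paper's proof consists precisely in observing that those arguments, written for CohFTs with the unit $e$ in the distinguished slot, go through verbatim with $e^\alpha$ in its place. The divisibility property~\eqref{eq:divisibility of DR} is likewise not used in this proof.
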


\begin{proof}
The proof is very similar to the proof of analogous statements in the case of the DR hierarchy associated to a CohFT (see~\cite[Proposition~6.10]{BDGR18} and~\cite[Theorem~6.1]{BDGR20}). So we only very briefly sketch the details.

Consider a stable tree $T\in\SRT_{g,n,1}$. We will call a level function $l\colon V(T)\to\mbZ_{\ge 1}$ \emph{injective} if $|l^{-1}(i)|=1$ for each $1\le i\le\deg(l)$. Clearly, such a function gives a bijection between the sets $V(T)$ and $\br{|V(T)|}$. Denote by $\mcL^{(i)}(T)$ the set of all injective level functions on $T$. For $l\in\mcL^{(i)}(T)$, denote by $\overline{(T,l)}$ the stable tree from the set $\SRT_{g,n+|V(T)|,1}$ obtained as follows:
\begin{itemize}
\item We attach to each vertex $v$ of $T$ a new regular leg and label it by the number $l(v)+n$. 

\item We relabel the unique frozen leg of $T$ by the number $n+|V(T)|+1$.
\end{itemize}

For an $N$-tuple $\oQ=(Q^1,\ldots,Q^N)\in\hcA_u^N$, denote by $D_{\oQ}$ the linear operator in $\hcA_u$ defined by
$$
D_{\oQ}:=\sum_{k\ge 0}\left(\d_x^k Q^\gamma\right)\frac{\d}{\d u^\gamma_k}.
$$
Let $\oP_{\beta,d}:=(P^1_{\beta,d},\ldots,P^N_{\beta,d})$. From the definition of the formal power series $u^{\str;\alpha}$ and $\mcF^{\DR;\alpha}$, it follows that
$$
\left.\frac{\d^n\mcF^{\DR;\alpha}_g}{\d t^{\alpha_1}_{d_1}\cdots\d t^{\alpha_n}_{d_n}}\right|_{t^*_*=0}=\Coef_{\eps^{2g}}\left.\left(D_{\d_x\oP_{\alpha_n,d_n}}\cdots D_{\d_x\oP_{\alpha_3,d_3}}D_{\d_x\oP_{\alpha_2,d_2}}P^{\alpha}_{\alpha_1,d_1}\right)\right|_{u^\gamma_k=\delta_{k,1}A^\gamma}.
$$
In the same way as in the proof of \cite[Lemma~6.9]{BDGR18}, this formula implies that
\begin{align*}
  &\left.\frac{\d^n\mcF^{\DR;\alpha}_g}{\d t^{\alpha_1}_{d_1}\cdots\d t^{\alpha_n}_{d_n}}\right|_{t^*_*=0}\\
 &\qquad =\frac{1}{(2g+n-1)!}\Coef_{a_1\cdots a_{2g+n-1}}\left(\sum_{T\in\SRT^n_{g,2g+n-1,1}}\sum_{l\in\mcL^{(i)}(T)}\right.\\
&\qquad\hphantom{=}\left.\int_{\oM_{g,2g+2n}}\hspace{-0.5cm}\lambda_g\,\perm^*_{1,2g+2n-1}\DR_{\overline{(T,l)}}\left(A,\ozero_n,-\sum a_i\right)\,c_{g,2g+2n}\left(e^\alpha\otimes e^{\otimes (2g+n-1)}\otimes\otimes_{i=1}^n e_{\alpha_i}\right)\prod_{i=1}^n\psi_{2g+n+i}^{d_i}\right),
\end{align*}
where $A=(a_1,\ldots,a_{2g+n-1})$ and $\ozero_n:=(0,\ldots,0)\in\mbZ^n$. In the case $\sum d_i\le 2g-1$, the proof of the vanishing of the right-hand side of this equation goes along the same lines as the proof of \cite[Lemma~6.11]{BDGR18}. Indeed, \cite[Lemma~6.11]{BDGR18} says that the right-hand side of the equation vanishes if we replace the class $c_{g,2g+2n}(e^\alpha\otimes e^{\otimes (2g+n-1)}\otimes\otimes_{i=1}^n e_{\alpha_i})$ with the class $c_{g,2g+2n}(e\otimes e^{\otimes (2g+n-1)}\otimes\otimes_{i=1}^n e_{\alpha_i})$ with $c_{g,2g+2n}$ being a CohFT. However, one can easily see that the same proof works in our case as well.

We now consider  the case $\sum d_i\ge 2g$. For a tree $\SRT_{g,n,1}$ and $v\in V(T)$, define  
$$
\ind(v):=\min\{1\le i\le n\;|\;\text{$\sigma_i$ is attached to $v$}\}.
$$
If $v$ is not incident to any regular leg, then we write $\ind(v):=\infty$. Let us call the tree $T$ \emph{special} (in~\cite[Section~6.5.1]{BDGR20}, the authors said ``admissible'')~if
\begin{itemize}
\item[a)] $\ind(v)<\infty$ for any vertex $v\in V(T)$;

\item[b)] $\ind(v_1)<\ind(v_2)$ for any two distinct vertices $v_1,v_2\in V(T)$ such that $v_2$ is a descendant of $v_1$.
\end{itemize}
Denote by $\SRT_{g,n,1}^{(s)}\subset\SRT_{g,n,1}$ the subset of all special trees.

For a tree $T\in\SRT^{(s)}_{g,n,1}$, define 
$$
S_T:=\left\{\oc=(c_1,\ldots,c_n)\in\mbZ_{\ge 0}^n\;\left|\;\begin{minipage}{4.5cm}
$c_i=0$ unless $i=\ind(v)$ for some $v\in V(T)$
\end{minipage}\right.\right\}.
$$
Let $d\ge 2g$. We claim that
\begin{align*}
&\sum_{\substack{d_1,\ldots,d_n\ge 0\\\sum d_i=d}}\frac{\prod b_i^{d_i}}{(2g+n-1)!}\Coef_{a_1\cdots a_{2g+n-1}}\left(\sum_{T\in\SRT^n_{g,2g+n-1,1}}\sum_{l\in\mcL^{(i)}(T)}\right.\\
&\quad\left.\int_{\oM_{g,2g+2n}}\hspace{-0.5cm}\lambda_g\,\perm^*_{1,2g+2n-1}\DR_{\overline{(T,l)}}\Big(A,\ozero_n,-\sum a_i\Big)\,c_{g,2g+2n}\left(e^\alpha\otimes e^{\otimes (2g+n-1)}\otimes\otimes_{i=1}^n e_{\alpha_i}\right)\prod_{i=1}^n\psi_{2g+n+i}^{d_i}\right)\\
&\quad=\sum_{T\in\SRT^{(s)}_{g,n,1}}\sum_{\substack{\oc\in S_T\\\sum c_i=d-(2g+|V(T)|-1)}}\int_{\oM_{g,n+1}}\hspace{-0.5cm}\lambda_g\,\perm^*_{1,n}\DR_T\left(B,-\sum b_i\right)c_{g,n+1}(e^\alpha\otimes\otimes_{i=1}^n e_{\alpha_i})\prod_{i=1}^n(b_i\psi_{i+1})^{c_i},
\end{align*}
where $B=(b_1,\ldots,b_n)$. Indeed, in~\cite[Section~6.5.2]{BDGR20}, this equation is proved if we replace the classes $c_{g,k+1}(e^\alpha\otimes\otimes_{i=1}^k v_i)$, $v_i\in V$, with the classes $c_{g,k+1}(e\otimes\otimes_{i=1}^k v_i)$ with $c_{g,k+1}$ being a CohFT. However, one can easily check that the same proof works in our case as well.

Therefore, it is sufficient to check the cohomological relation
\begin{align*}
&\sum_{T\in\SRT^{(s)}_{g,n,1}}\sum_{\substack{\oc\in S_T\\\sum c_i=d-(2g+|V(T)|-1)}}\lambda_g\DR_T\left(b_1,\ldots,b_n,-\sum b_i\right)\prod_{i=1}^n(b_i\psi_{i})^{c_i}\\
&\quad=\sum_{T\in\SRT^{d-2g+1}_{g,n,1}}C(T)\lambda_g\DR_T\left(b_1,\ldots,b_n,-\sum b_i\right),
\end{align*}
but this was done in~\cite[Equation~(6.13)]{BDGR20}.
\end{proof}

Note that the definition of formal power series $\mcF^{\DR;\alpha}$ implies that $\frac{\d}{\d t^\beta_d}\frac{\d\mcF^{\DR;\alpha}}{\d t^\un_0}=\frac{\d u^{\str;\alpha}}{\d t^\beta_d}$. Since, by Theorem~\ref{theorem:DR correlators for F-CohFT}, we have $\left.\frac{\d\mcF^{\DR;\alpha}}{\d t^\un_0}\right|_{t^*_*=0}=0$, we conclude that
$$
\frac{\d\mcF^{\DR;\alpha}}{\d t^\un_0}=u^{\str;\alpha}.
$$

\subsubsection{Conjecture~\ref{conjecture2} and the equivalence of the two hierarchies for F-CohFTs}

Consider an arbitrary F-CohFT. Suppose that Conjecture~\ref{conjecture2} is true. Then Theorems~\ref{theorem:reducing transformation for F-CohFT} and~\ref{theorem:DR correlators for F-CohFT} imply that $\Omega^{\red;\alpha,0}=\mcF^{\DR;\alpha}$ and therefore
$$
\mcF^{\alpha,0}-\left.\tOmega^{\alpha,0}\right|_{w^\gamma_c=w^{\top;\gamma}_c}=\mcF^{\DR,\alpha}.
$$
Differentiating both sides by $\frac{\d}{\d t^\un_0}$, we obtain
$$
w^{\top;\alpha}-\left.\left(\d_x\tOmega^{\alpha,0}\right)\right|_{w^\gamma_c=w^{\top;\gamma}_c}=u^{\str;\alpha}.
$$
Consider the Miura transformation
\begin{gather}\label{eq:DRDZ-Miura}
w^\alpha\longmapsto u^\alpha(w^*_*,\eps)=w^\alpha-\d_x\tOmega^{\alpha,0} 
\end{gather}
and its inverse $u^\alpha\mapsto w^\alpha(u^*_*,\eps)$. We see that the  Miura transformation~\eqref{eq:DRDZ-Miura} transforms the DR hierarchy to a hierarchy having the $N$-tuple $\ow^{\,\top}$ as its solution. Lemma~\ref{lemma:wt-change} implies that this hierarchy coincides with the DZ hierarchy corresponding to our F-CohFT. In particular, we obtain that the right-hand side of each equation of the DZ hierarchy~\eqref{eq:DZ for F-CohFT} is a differential polynomial of degree $1$.

Summarizing, we obtain the following result.

\begin{theorem}\label{theorem:implication of conjecture2}
Conjecture~\ref{conjecture2} implies that the DZ hierarchy corresponding to an arbitrary F-CohFT is polynomial and that it is related to the DR hierarchy by the Miura transformation~\eqref{eq:DRDZ-Miura}. Moreover, there is a geometric formula for the differential polynomials~$\tOmega^{\alpha,0}$ defining this Miura transformation, given by part~\eqref{th46-3} of Theorem~\ref{theorem:reducing transformation for F-CohFT}.
\end{theorem}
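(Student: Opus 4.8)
The plan is to assemble the proof from the two geometric computations already in place. Theorem~\ref{theorem:reducing transformation for F-CohFT} (in the case $k=0$) expresses the correlators of the reduced potential $\Omega^{\red;\alpha,0}$ of the DZ side in terms of the classes $B^1_{g,\od}$, and Theorem~\ref{theorem:DR correlators for F-CohFT} expresses the correlators of the DR potential $\mcF^{\DR;\alpha}$ in terms of the classes $A^1_{g,\od}$; both series moreover vanish for $\sum d_i\le 2g-1$ and at $t^*_*=0$. Granting Conjecture~\ref{conjecture2}, i.e. $B^1_{g,\od}=A^1_{g,\od}$, the two families of $n$-point functions coincide (the only nontrivial range being $\sum d_i\ge 2g$, where $\perm_{1,n}^*$ is applied to equal classes), so $\Omega^{\red;\alpha,0}=\mcF^{\DR;\alpha}$. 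By the defining property of the reduced potential in part~\eqref{th46-1} of Theorem~\ref{theorem:reducing transformation for F-CohFT}, this gives
\[
\mcF^{\alpha,0}-\left.\tOmega^{\alpha,0}\right|_{w^\gamma_c=w^{\top;\gamma}_c}=\mcF^{\DR;\alpha}.
\]

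Next I would differentiate this identity by $\frac{\d}{\d t^\un_0}$, using $\frac{\d\mcF^{\alpha,0}}{\d t^\un_0}=w^{\top;\alpha}$ and $\frac{\d\mcF^{\DR;\alpha}}{\d t^\un_0}=u^{\str;\alpha}$, to obtain $w^{\top;\alpha}-\left.\d_x\tOmega^{\alpha,0}\right|_{w^\gamma_c=w^{\top;\gamma}_c}=u^{\str;\alpha}$. Since part~\eqref{th46-1} of Theorem~\ref{theorem:reducing transformation for F-CohFT} gives $\tOmega^{\alpha,0}\in\hcA_{w;-1}$, a degree count shows $\d_x\tOmega^{\alpha,0}=\eps f^\alpha$ with $f^\alpha\in\hcA_{w;1}$, so $w^\alpha\mapsto u^\alpha=w^\alpha-\d_x\tOmega^{\alpha,0}$ is a genuine Miura transformation close to the identity, namely~\eqref{eq:DRDZ-Miura}. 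Under it the DR hierarchy is carried to a system of evolutionary PDEs whose right-hand sides are again differential polynomials (a Miura transformation with differential-polynomial coefficients preserves polynomiality), and the displayed identity shows that $\ow^{\,\top}$ solves the transformed system.

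Finally I would invoke Lemma~\ref{lemma:wt-change}: both the transformed DR hierarchy and the DZ hierarchy~\eqref{eq:DZ for F-CohFT} have $\ow^{\,\top}$ as a solution, and the right-hand sides of both, when evaluated at $w^\gamma_c=w^{\top;\gamma}_c$, reproduce the same formal power series $\frac{\d w^{\top;\alpha}}{\d t^\beta_b}$; as these right-hand sides lie in $\hcA_w^\wk$ and are determined by that evaluation, the two hierarchies coincide. Hence $\d_x\Omega^{\alpha,0}_{\beta,b}$ is a differential polynomial, so the DZ hierarchy is polynomial and related to the DR hierarchy by~\eqref{eq:DRDZ-Miura}, and the differential polynomials $\tOmega^{\alpha,0}$ are given geometrically by part~\eqref{th46-3} of Theorem~\ref{theorem:reducing transformation for F-CohFT}.

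In this argument the substance has already been absorbed into Theorems~\ref{theorem:reducing transformation for F-CohFT} and~\ref{theorem:DR correlators for F-CohFT}, whose proofs identify the relevant correlators with intersection numbers against $B^1_{g,\od}$ and $A^1_{g,\od}$; once those are granted, the theorem is a bookkeeping application of Lemma~\ref{lemma:wt-change}. The only point requiring a moment's care is that $\d_x\tOmega^{\alpha,0}$ carries a common factor $\eps$, so that~\eqref{eq:DRDZ-Miura} is a Miura transformation of the admissible (close-to-identity) type; this is exactly the degree statement $\tOmega^{\alpha,0}\in\hcA_{w;-1}$ recorded in part~\eqref{th46-1} of Theorem~\ref{theorem:reducing transformation for F-CohFT}.
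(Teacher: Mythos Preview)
Your proposal is correct and follows essentially the same approach as the paper: equate $\Omega^{\red;\alpha,0}$ with $\mcF^{\DR;\alpha}$ via Theorems~\ref{theorem:reducing transformation for F-CohFT} and~\ref{theorem:DR correlators for F-CohFT} together with Conjecture~\ref{conjecture2}, differentiate by $\frac{\d}{\d t^{\un}_0}$, and invoke Lemma~\ref{lemma:wt-change}. Your explicit check that $\tOmega^{\alpha,0}\in\hcA_{w;-1}$ makes~\eqref{eq:DRDZ-Miura} a close-to-identity Miura transformation is a nice detail the paper leaves implicit.
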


\subsubsection{Conjecture~\ref{conjecture3} and the DR/DZ equivalence conjecture for partial CohFTs}\label{subsubsection:implication of conjecture3}

Following~\cite{BGR19}, let us recall here the relation between Conjecture~\ref{conjecture3} and the DR/DZ equivalence conjecture for partial CohFTs, proposed in~\cite{BDGR18}. Again, what we will say was discussed in~\cite{BGR19} and~\cite{BDGR18} for CohFTs, but the required results are true for partial CohFTs, with the same proofs.

Consider an arbitrary partial CohFT, the associated F-CohFT, and the corresponding DR hierarchy. Define
$$
h_{\alpha,p}:=\eta_{\un\mu}P^\mu_{\alpha,p+1}\in\hcA_{u;0},\quad 1\le\alpha\le N,\,\,p\ge -1.
$$
In~\cite{BDGR18}, the authors proved that for any $1\le\alpha,\beta\le N$ and $p,q\ge 0$, there exists a unique differential polynomial $\Omega^\DR_{\alpha,p;\beta,q}\in\hcA_{u;0}$ such that
$$
\d_x\Omega^\DR_{\alpha,p;\beta,q}=D_{\d_x\oP_{\beta,q}}h_{\alpha,p-1},\quad\left.\Omega^\DR_{\alpha,p;\beta,q}\right|_{u^*_*=0}=0.
$$
In~\cite{BDGR18}, the authors proved that there exists a unique formal power series $\mcF^\DR(t^*_*,\eps)=\sum_{g\ge 0}\eps^{2g}\mcF^\DR_g(t^*_*)\in\mbC[[t^*_*,\eps]]$ such that
\begin{align*}
&\frac{\d^2 \mcF^\DR}{\d t^\alpha_p\d t^\beta_q}=\left.\Omega^\DR_{\alpha,p;\beta,q}\right|_{u^\gamma_c=u^{\str;\gamma}_c}, && 1\le\alpha,\beta\le N,\quad p,q\ge 0,\\
&\frac{\d \mcF^\DR}{\d t^\un_0}=\sum_{n\ge 0}t^\alpha_{n+1}\frac{\d\mcF^\DR}{\d t^\alpha_n}+\frac{1}{2}\eta_{\alpha\beta}t^\alpha_0 t^\beta_0, &&\\
&\left.\mcF^\DR\right|_{t^*_*=0}=0. &&
\end{align*}
The formal power series $\mcF^{\DR}$ is called the \emph{potential} of the DR hierarchy. By \cite[Proposition~6.10]{BDGR18} and \cite[Theorem 6.1]{BDGR20}, for $g\ge 0$, $n\ge 1$, $d_1,\ldots,d_n\ge 0$, and $1\le\alpha_1,\ldots,\alpha_n\le N$, we have
\begin{gather}\label{eq:DR correlators}
\left.\frac{\d^n\mcF^{\DR}}{\d t^{\alpha_1}_{d_1}\cdots\d t^{\alpha_n}_{d_n}}\right|_{t^*_*=0}=
\begin{cases}
0&\text{if $\sum d_i\le 2g-2$},\\
\int_{\oM_{g,n}}A^0_{g,\od}\,c_{g,n}\left(\otimes_{i=1}^n e_{\alpha_i}\right)&\text{if $\sum d_i\ge 2g-1$}.
\end{cases}
\end{gather}

Suppose that Conjecture~\ref{conjecture3} is true. Then Equations~\eqref{eq:reduced correlators} and~\eqref{eq:DR correlators} imply that $\mcF^\red=\mcF^\DR$ or, equivalently,
\begin{gather}\label{eq:Fred and FDR}
\mcF-\left.\cP\right|_{w^\gamma_c=w^{\top;\gamma}_c}=\mcF^\DR,
\end{gather}
which was formulated in~\cite{BDGR18} and called the \emph{strong DR/DZ equivalence conjecture}. Let us discuss consequences of Equation~\eqref{eq:Fred and FDR}.

Consider the Miura transformation
$$
u^\alpha\longmapsto\tu^{\,\alpha}(u^*_*,\eps):=\eta^{\alpha\mu}\Omega^\DR_{\mu,0;\un,0}.
$$
The variables $\tu^\alpha$ are called the \emph{normal coordinates} of the DR hierarchy. Obviously, the $N$-tuple of formal power series
$$
\tu^{\;\str;\alpha}(t^*_*,\eps):=\eta^{\alpha\mu}\frac{\d^2\mcF^\DR}{\d t^\mu_0\d t^\un_0},\quad 1\le\alpha\le N,
$$
is a solution of the DR hierarchy written in the normal coordinates. Now consider the following Miura transformation relating the variables $w^\alpha$ and $\tu^{\,\alpha}$:
\begin{gather}\label{eq:normal Miura}
w^\alpha\longmapsto\tu^{\,\alpha}(w^*_*,\eps):=w^\alpha-\eta^{\alpha\mu}\d_x D_{\d_x\oP_{\mu,0}}\cP.
\end{gather}
Equation~\eqref{eq:Fred and FDR} implies that the DR hierarchy written in the variables $w^\alpha$ has the $N$-tuple~$\ow^{\,\top}$ as its solution. Therefore, by Lemma~\ref{lemma:wt-change}, this hierarchy coincides with the DZ hierarchy corresponding to our partial CohFT. So we obtain that the DZ hierarchy corresponding to our partial CohFT is polynomial and that it is related to the DR hierarchy by a Miura transformation.

In fact, as is explained in~\cite{BDGR18}, Equation~\eqref{eq:Fred and FDR} implies that there is a relation between the two hierarchies that is stronger than the Miura equivalence. Both hierarchies are Hamiltonian, and they are endowed with a tau-structure, meaning that there a special choice of densities of the Hamiltonians satisfying certain properties. For the DZ hierarchy, these densities are $\eta_{\un\mu}\Omega^{\mu,0}_{\alpha,p+1}$, and for the DR hierarchy, these densities are $h_{\alpha,p}$, $1\le\alpha\le N$, $p\ge 0$. In~\cite{BDGR18}, the authors proved that Equation~\eqref{eq:Fred and FDR} implies that the two hierarchies, together with their Hamiltonian structure and tau-structure, are related by the normal Miura transformation given by~\eqref{eq:normal Miura}. Moreover, there is a geometric formula for the differential polynomial~$\cP$ describing this normal Miura transformation, given by Theorem~\ref{theorem:geometric formula for reducing Miura transformation}.


\section{Proof of Theorem~\ref{theorem:main}}\label{section:proof of main theorem}

\subsection{Proof of Conjecture~\ref{conjecture1} for $\boldsymbol{n=1}$}\label{subsection:VanishingOnePoint}

In the case $n=1$, Conjecture~\ref{conjecture1} says that $B^m_{g,d}=0$ for $m\ge 2$ and $d\geq 2g+m-1$. Let us prove that. To explain the proof, we need some notation. We denote by $\Gamma^{g,m}_{d\mathop{|}k}$, $g\geq 0$, $k\geq 1$, the following tautological class:
\begin{align*}
	\Gamma^{g,m}_{d\mathop{|}k} \coloneqq \sum_{\substack{g_1,\dots,g_k\\d_1,\dots,d_k}} 
\vcenter{\xymatrix@C=20pt@R=5pt{
	& & & & & & {}\ar@{.}@/^/[dd] \\
		 & *+[o][F-]{{g_1}}\ar@{-}[l]*{{}_{1\,}}_<<{\psi^{d_1}\,} & *+[o][F-]{{g_2}}\ar@{-}[l]_<<{\psi^{d_2}\,}\ar@{-}[r] & \cdots & *+[o][F-]{{g_{k-1}}}\ar@{-}[l]_<<<{\psi^{d_{k-1}}} & *+[o][F-]{{g_k}}\ar@{-}[l]_<<{\psi^{d_k}\,}\ar@{-}[ru]*{{}_{\,2}}\ar@{-}[rd]*{{}_{\hspace{0.25cm} m+1}}\ar@{-}[r] & \\
 & & & & & & }}\in R^d(\oM_{g,m+1}), 
\end{align*} 
where $d_1+\cdots+d_k+k-1=d$, $g_1+\cdots+g_k=g$, $g_1,\dots,g_{k-1}\geq 1$,
$g_k\geq 0$ if $m\geq 2$ and $g_k\geq 1$ if $m=0,1$,  
and for any $i=2,\dots,k$, we have $d_i+\cdots+d_k+k-i\leq 2(g_i+\cdots+g_{k})+m-2$. 

\begin{lemma}\label{lem:m1} We have $B^m_{g,d} = B^m_{g,2g+m-1} \psi_1^{d-2g-m+1}$ for $d\geq 2g+m-1$ and 
\begin{align*}
	B^m_{g,2g+m-1} = \sum_{k=1}^{\infty} (-1)^{k+1}  \Gamma^{g,m}_{2g+m-1\mathop{|} k}. 
\end{align*}
\end{lemma}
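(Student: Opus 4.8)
The plan is to specialize the explicit formula for $B^m_{g,\od}$ proved in Theorem~\ref{theorem:arbitrary m reformulation} to the case $n=1$ and to read off both assertions from it. First I would observe that for $n=1$ the only stable trees $T\in\SRT_{g,1,m}$ contributing to that formula are the linear chains of vertices $v_1\to v_2\to\cdots\to v_k$, with the root $v_1$ carrying the $m$ frozen legs and the unique regular leg $\sigma_1$ attached to the terminal vertex $v_k$: the coefficient $C_{\str}(T,p,\od)$ vanishes unless every vertex without direct descendants carries a regular leg, and with only one regular leg available this forces $T$ to be such a chain. On a chain the only level function satisfying conditions (a)--(c) is the canonical one $l_T(v_j)=j$, so $\mathcal{L}(T,p)$ is either empty or equal to $\{l_T\}$, and hence $C_{\lvl}(T,p)=(-1)^{k-1}$ if $l_T$ is $p$-admissible and $0$ otherwise.

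Next I would compute the two combinatorial coefficients on such a chain. Writing $p_j:=p(h_j)$ for the forward half-edge $h_j$ at $v_j$ when $1\le j\le k-1$, setting $p_k:=p(\sigma_1)$, and putting $S_j:=\sum_{l=1}^j(p_l+1)$, the degree constraint $|E(T)|+\sum_h p(h)=d$ reads $\sum_{l=1}^k p_l=d-k+1$, that is, $S_k=d+1$; and a direct inspection of the Pochhammer factors in $C_{\str}$ shows that the factor attached to $h_j$ for $1\le j\le k$ (with $h_k:=\sigma_1$) equals $(S_j)_{(p_j+1)}$, so the product telescopes, $\prod_{j=1}^k(S_j)_{(p_j+1)}=S_k!=(d+1)!$, giving $C_{\str}(T,p,\od)=1$. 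Unwinding the $p$-admissibility of $l_T$ on the chain produces exactly the inequalities $p_1+\cdots+p_i+(i-1)\le 2(g_1+\cdots+g_i)-2+m$ for $1\le i<k$. After relabeling the chain in the orientation used in the definition of $\Gamma^{g,m}_{d\mathop{|}k}$ (root $=g_k$, regular leg on $g_1$, and $d_j$ the $\psi$-exponents read off the picture), these become precisely $d_i+\cdots+d_k+k-i\le 2(g_i+\cdots+g_k)+m-2$ for $i=2,\dots,k$, the stability conditions on the $g_j$ match those listed for $\Gamma^{g,m}_{d\mathop{|}k}$, and $[T,p]$ is literally the decorated-graph class appearing there. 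Theorem~\ref{theorem:arbitrary m reformulation} then yields $B^m_{g,d}=\sum_{k\ge 1}(-1)^{k+1}\Gamma^{g,m}_{d\mathop{|}k}$ for all $d$; taking $d=2g+m-1$ is the second assertion of the lemma.

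For the first assertion I would pull powers of $\psi_1$ out of the regular leg. Since $\sigma_1$ is a leg, $\xi_T^*\psi_1=\psi_{\sigma_1}$, so the projection formula gives $[T,p]=\psi_1^{\,r}\,[T,p-r\,\delta_{\sigma_1}]$ for $0\le r\le p(\sigma_1)$. The admissibility inequalities above involve only $p_1,\dots,p_{k-1}$ and never $p(\sigma_1)$, while the inequality with $i=k-1$ forces $p(\sigma_1)=d-(k-1)-(p_1+\cdots+p_{k-1})\ge d-2g-m+1$ for every pair $(T,p)$ contributing to $B^m_{g,d}$ when $d\ge 2g+m-1$ (the case $k=1$ being immediate). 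Factoring $\psi_1^{\,d-2g-m+1}$ out of each term then defines a bijection between the terms of $B^m_{g,d}$ and those of $B^m_{g,2g+m-1}$ that preserves both $C_{\lvl}$ and $C_{\str}$, which gives $B^m_{g,d}=B^m_{g,2g+m-1}\,\psi_1^{\,d-2g-m+1}$.

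The substantive work is the bookkeeping in the middle step: verifying the telescoping of the Pochhammer product in $C_{\str}$ and, more delicately, checking that the level-structure and admissibility conditions of Theorem~\ref{theorem:arbitrary m reformulation} restricted to chains translate verbatim into the index set defining $\Gamma^{g,m}_{d\mathop{|}k}$ — including that the stability constraints on the $g_j$ (which differ at the root depending on whether $m\ge 2$ or $m\le 1$) and the boundary value $k=1$ are matched correctly. Everything else is formal.
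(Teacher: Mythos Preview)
Your argument is correct. The paper, by contrast, dispatches the lemma in one line (``directly from unfolding the definitions''): for $n=1$ the completeness conditions in Definition~\ref{def:Bgdm-MainDefinition} already force every $T\in\SRT^{(b,c,a)}_{g,1,m;\circ}$ to be a chain (a single regular leg at the top level, every vertex having a descendant there), and the pushforward $\ee_*[T,d]$ along the chain is then an immediate application of the string equation, producing the summands of $\Gamma^{g,m}_{2g+m-1\mid k}$ on the nose; the factor $\psi_1^{d-2g-m+1}$ falls out because the admissibility inequalities cap the degree carried by the internal half-edges.

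Your route through Theorem~\ref{theorem:arbitrary m reformulation} reaches the same conclusion but trades the direct one-step string-equation computation for the packaged coefficients $C_{\lvl}$ and $C_{\str}$: you then have to verify the Pochhammer telescoping (which you do correctly, $\prod_j (S_j)_{(p_j+1)}=S_k!$), that the unique level function on a chain recovers exactly the defining inequalities of $\Gamma^{g,m}_{d\mid k}$ after the orientation swap, and that the stability constraints on the $g_j$ line up. All of this checks out, including your observation that the admissibility conditions never see $p(\sigma_1)$, which is what makes the $\psi_1$-factorisation go through. The advantage of your approach is that it exercises the general machinery and makes the combinatorial coefficients fully explicit; the paper's advantage is brevity, since for $n=1$ the tree structure is so constrained that the general formula is overkill.
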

\begin{proof} 
This follows directly from unfolding the definitions. Note that the sum over $k$ is in fact finite since $\Gamma^{g,m}_{d\mathop{|}k}=0$ for $k\geq g+2$ for any $m$ and $\Gamma^{g,m}_{d\mathop{|}k}=0$ for $k\geq g+1$ for $m=0,1$.
\end{proof}

By this lemma, it is sufficient to prove that $B^m_{g,2g+m-1}=0$, $g\geq 0$, $m\geq 2$. The proof is based on the following Liu--Pandharipande relations, see ~\cite[Propositions 1.1 and 1.2]{LP11} (see also~\cite[Corollary~3.2]{BHIS21}), in the tautological ring:
\begin{align}
&\xymatrix@C=17pt@R=0pt{
& *+[o][F-]{{g}}\ar@{-}[l]*{{}_{1\,}}_<<<{\psi^{2g+r}}\ar@{-}[r]*{{}_{\,2}} & 
} +
(-1)^{2g+r+1}\xymatrix@C=17pt@R=0pt{
& *+[o][F-]{{g}}\ar@{-}[r]*{{}_{\,2}}^<<<<{\psi^{2g+r}}\ar@{-}[l]*{{}_{1\,}} & 
} =
\sum_{\substack{g_1+g_2=g\\g_1,g_2\ge 1\\d_1+d_2=2g+r-1\\d_1,d_2\ge 0}}(-1)^{d_1}\xymatrix@C=12pt@R=0pt{
& *+[o][F-]{{g_1}}\ar@{-}[rr]^<<<{\psi^{d_1}}\ar@{-}[l]*{{}_{1\,}} & & *+[o][F-]{{g_2}}\ar@{}[ll]_<<{\psi^{d_2\,}}\ar@{-}[r]*{{}_{\,2}} & 
},\label{eq:LP1}\\
&\vcenter{\xymatrix@C=15pt@R=5pt{
& & & {}\ar@{.}@/^/[dd] \\
& & *+[o][F-]{{g}}\ar@{-}[ll]*{{}_{1\,}}_<<<<<{\raisebox{0.15cm}{$\scriptstyle{\psi^{2g+m-1+r}}$}}\ar@{-}[ru]*{{}_{\,2}}\ar@{-}[rd]*{{}_{\hspace{0.25cm} m+1}}\ar@{-}[r] & \\
& & & }} =
\sum_{\substack{g_1+g_2=g\\g_1\ge 1,\,g_2\ge 0\\d_1+d_2=2g+m-2+r\\d_1,d_2\ge 0}}(-1)^{d_1}\vcenter{\xymatrix@C=15pt@R=5pt{
& & & & {}\ar@{.}@/^/[dd] \\
& *+[o][F-]{{g_1}}\ar@{-}[l]*{{}_{1\,}}\ar@{-}[rr]^<<<{\psi^{d_1}} & & *+[o][F-]{{g_2}}\ar@{}[ll]_<<{\psi^{d_2}\,}\ar@{-}[ru]*{{}_{\,2}}\ar@{-}[rd]*{{}_{\hspace{0.25cm} m+1}}\ar@{-}[r] & \\
& & & & }}\label{eq:LP2}
\end{align}
for any $r\geq 0$. 

We introduce more notation. For $g\geq 1$, $k\geq 1$, and $d\le 2g-1$, let $\gamma^g_{d\mathop{|}k}$ be the sum of decorated stable graphs given as 
\begin{align*}
	\gamma^{g}_{d\mathop{|}k}\coloneqq \sum_{\substack{g_1,\dots,g_k\\d_1,\dots,d_k}} 
	\xymatrix@C=25pt@R=0pt{
& *+[o][F-]{{g_1}}\ar@{-}[l]*{{}_{1\,}}\ar@{-}[r]^<<<{\psi^{d_1}} & *+[o][F-]{{g_2}}\ar@{-}[r]^<<<{\psi^{d_2}} & \cdots & *+[o][F-]{{g_{k-1}}}\ar@{-}[l]\ar@{-}[r]^<<<<{\psi^{d_{k-1}}} & *+[o][F-]{{g_k}}\ar@{-}[r]*{{}_{\,2}}^<<<{\psi^{d_k}} & }, 
\end{align*} 
where $d_1+\cdots+d_k+k-1=d$, $g_1+\cdots+g_k=g$, $g_1,\dots,g_{k}\geq 1$, and for any $i=1,\dots,k-1$, we have $d_1+\cdots+d_i+i-1\leq 2(g_1+\cdots+g_{i})-1$. For $d\ge 2g$, we define $\gamma^{g}_{d\mathop{|}k}\coloneqq 0$. Note that $\gamma^{g}_{d\mathop{|}k}=0$ for $k>g$. We also define 
$$
\tgamma^{g,m}_{d\mathop{|}k}\coloneqq\begin{cases}\Gamma^{g,m}_{d\mathop{|}k}&\text{if $d\le 2g+m-2$},\\ 0&\text{otherwise}.\end{cases}
$$

Let $\gamma_1\diamond\gamma_2$ be the operation of concatenation of two decorated stable graphs $\gamma_1\in G_{g_1,2}$ and $\gamma_2\in G_{g_2,m+1}$ that forms an edge from the second leg of $\gamma_1$ and the first leg of $\gamma_2$.

\begin{lemma} \label{lem:1ptInductiveLemma} 
For any $\ell\geq 1$, $m\geq 2$, we have
\begin{align} \label{eq:1ptInd}
& \sum_{k=1}^{\ell+1} (-1)^{k+1} \Gamma^{g,m}_{2g+m-1\mathop{|}k}\\
&\notag\; = \sum_{\substack{g_1+g_2=g\\ g_1\geq 1,\, g_2\geq 0 \\ d_1+d_2 = 2g+m-2\\ d_1,d_2\geq 0}} (-1)^{d_1+\ell-1}\gamma^{g_1}_{d_1\mathop{|}\ell} \diamond 
\vcenter{\xymatrix@C=14pt@R=5pt{
& & {}\ar@{.}@/^/[dd] \\ 
&*+[o][F-]{{g_2}}\ar@{-}[ru]*{{}_{\,2}}\ar@{-}[rd]*{{}_{\hspace{0.25cm} m+1}}\ar@{-}[l]*{{}_{1\,}}_{\psi^{d_2}} \ar@{-}[r] & \\
& &	}}\\
\notag & \hspace{1.3cm} + \sum_{\substack{g_1+g_2=g\\ g_1\geq 1,\,g_2\geq 0 \\ d_1+d_2 = 2g+m-2\\ d_1,d_2\geq 0}}(-1)^\ell
\left( 
\xymatrix@C=14pt@R=0pt{
&*+[o][F-]{{g_1}}\ar@{-}[r]*{{}_{\,2}}\ar@{-}[l]*{{\ }_{1\,}}_{\psi^{d_1}} & } + (-1)^{d_1+1}
\xymatrix@C=14pt@R=0pt{
&*+[o][F-]{{g_1}}\ar@{-}[l]*{{}_{1\,}}\ar@{-}[r]*{{}_{\,2}}^{\,\,\psi^{d_1}} & }\right) \diamond \tgamma^{g_2,m}_{d_2\mathop{|}\ell} \\ 
\notag & \hspace{1.3cm} + \sum_{\substack{g_1+g_2+g_3=g\\ g_1,g_2\geq 1,\, g_3\geq 0\\ d_1+d_2+d_3  = 2g+m-3\\ d_1,d_2,d_3\geq 0\\ k_1+k_3=\ell,\, k_1,k_3\geq 1}}
(-1)^{d_1+\ell-1} \gamma^{g_1}_{d_1\mathop{|}k_1}\diamond 
\left(\xymatrix@C=14pt@R=0pt{
&*+[o][F-]{{g_2}}\ar@{-}[r]*{{}_{\,2}}\ar@{-}[l]*{{}_{1\,}}_{\psi^{d_2}} & } + (-1)^{d_2+1}
\xymatrix@C=14pt@R=0pt{
&*+[o][F-]{{g_2}}\ar@{-}[l]*{{}_{1\,}}\ar@{-}[r]*{{}_{\,2}}^{\,\,\psi^{d_2}} & } \right)
\diamond \tgamma^{g_3,m}_{d_3\mathop{|}k_3}.
\end{align}
\end{lemma}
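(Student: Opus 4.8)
The plan is to prove~\eqref{eq:1ptInd} by induction on~$\ell$, with $g$ and $m$ fixed. Write $R_\ell$ for the right-hand side of~\eqref{eq:1ptInd} and set $S_j\coloneqq\sum_{k=1}^{j}(-1)^{k+1}\Gamma^{g,m}_{2g+m-1\mathop{|}k}$; the assertion is then $S_{\ell+1}=R_\ell$. First I would dispose of the case $g=0$, where both sides vanish: $\Gamma^{0,m}_{m-1\mathop{|}1}=\psi_1^{m-1}=0$ on $\oM_{0,m+1}$, we have $\Gamma^{0,m}_{m-1\mathop{|}k}=0$ for $k\geq 2$ since $g_1,\dots,g_{k-1}\geq 1$, and no graph $\gamma^{g_1}_{d\mathop{|}k}$ with $g_1\geq 1$ can occur. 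So from now on assume $g\geq 1$.

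For the base case $\ell=1$, the last sum in $R_1$ is empty (it requires $k_1+k_3=1$ with $k_1,k_3\geq 1$), and $S_2=\Gamma^{g,m}_{2g+m-1\mathop{|}1}-\Gamma^{g,m}_{2g+m-1\mathop{|}2}$, where $\Gamma^{g,m}_{2g+m-1\mathop{|}1}=\psi_1^{2g+m-1}$ lives on the single genus-$g$ vertex carrying all $m+1$ legs. Applying~\eqref{eq:LP2} with $r=0$ rewrites it as a signed sum of two-vertex caterpillars over $g_1\geq 1$, $g_2\geq 0$, $d_1+d_2=2g+m-2$. The summands with $d_1\leq 2g_1-1$ are exactly $\gamma^{g_1}_{d_1\mathop{|}1}\diamond\Gamma^{g_2,m}_{d_2\mathop{|}1}$ and reproduce the first sum of $R_1$; for the summands with $d_1\geq 2g_1$ (equivalently $d_2\leq 2g_2+m-2$, so $\tgamma^{g_2,m}_{d_2\mathop{|}1}=\Gamma^{g_2,m}_{d_2\mathop{|}1}$ is nonzero) I would unfold the two $\diamond$-products in the second sum of $R_1$ and check, using $(-1)^{d_1}(-1)^{d_1+1}=-1$, that they account exactly for $-\Gamma^{g,m}_{2g+m-1\mathop{|}2}$ together with those summands. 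This gives $S_2=R_1$.

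For the inductive step, assume $S_{\ell+1}=R_\ell$. Since $S_{\ell+2}=S_{\ell+1}-(-1)^\ell\Gamma^{g,m}_{2g+m-1\mathop{|}\ell+2}$, it suffices to prove $R_\ell=R_{\ell+1}+(-1)^\ell\Gamma^{g,m}_{2g+m-1\mathop{|}\ell+2}$. I would transform $R_\ell$ by applying a Liu--Pandharipande relation to the top piece (the one farthest from the root) of each of its three sums: relation~\eqref{eq:LP2} to the single-vertex cap $\Gamma^{g_2,m}_{d_2\mathop{|}1}$ in the first sum, which is legitimate because $\gamma^{g_1}_{d_1\mathop{|}\ell}\neq 0$ forces $d_1\leq 2g_1-1$ and hence $d_2\geq 2g_2+m-1$; and relation~\eqref{eq:LP1} to the two-term combinations of the shape appearing on the left-hand side of~\eqref{eq:LP1} in the second and third sums, which is legitimate since a nonzero $\tgamma$-factor (together, in the third sum, with a nonzero $\gamma^{g_1}$-factor) forces the relevant $\psi$-exponent to be at least twice the corresponding genus. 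Each application turns a cap vertex into a two-vertex chain; I would then re-collect the resulting caterpillars by the length of their $\gamma$-type prefix and the positions of the (at most two) ``junction'' vertices. The terms whose prefix stays $\gamma$-admissible through level $\ell+1$ assemble into the first sum of $R_{\ell+1}$, the terms that create a new junction at level $\ell+1$ assemble into the second and third sums of $R_{\ell+1}$, and the unique family of caterpillars that become entirely of $\Gamma$-type and of length $\ell+2$ assembles into $(-1)^\ell\Gamma^{g,m}_{2g+m-1\mathop{|}\ell+2}$.

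The main obstacle is the combinatorial bookkeeping in this last reshuffling. One must keep exact track of (i) which leg or half-edge carries each $\psi$-power --- the $\gamma$-type graphs carry theirs oriented away from the distinguished leg $\sigma_1$, whereas the $\Gamma$-type and $\tgamma$-type graphs carry theirs oriented toward it, so that~\eqref{eq:LP1} is precisely the tool for moving a power from a leg onto an adjacent half-edge; (ii) the signs generated by~\eqref{eq:LP1} and~\eqref{eq:LP2}; and (iii) the exact inequalities separating the $\gamma$-admissible, $\Gamma$-admissible, and fully inadmissible regimes, so that each of the four families on the right-hand side of~\eqref{eq:1ptInd} for $\ell+1$ (including the extra $\Gamma^{g,m}_{2g+m-1\mathop{|}\ell+2}$ term) is reproduced with the correct multiplicity. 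One also has to verify that potentially unstable or genus-$0$ vertices arising along the way do not contribute spuriously; that the total degree $2g+m-1$ is preserved at every step is a convenient consistency check.
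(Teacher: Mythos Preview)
Your approach is essentially identical to the paper's: induction on $\ell$, with the base case reducing to the Liu--Pandharipande relation~\eqref{eq:LP2} at $r=0$, and the induction step carried out by applying~\eqref{eq:LP2} to the cap of the first sum and~\eqref{eq:LP1} to the two-term combinations in the second and third sums of $R_\ell$, then re-collecting. One small correction to your bookkeeping in the induction step: the term $(-1)^\ell\Gamma^{g,m}_{2g+m-1\mathop{|}\ell+2}$ does not arise as a separate family in the LP output; it is (up to sign) the ``$\psi^{d_1}$ on leg~$1$'' half of the second sum of $R_{\ell+1}$, and this is exactly the piece that the LP rewriting of $R_\ell$ \emph{fails} to produce---the paper handles this by moving $(-1)^{\ell+1}\Gamma^{g,m}_{2g+m-1\mathop{|}\ell+2}$ to the left-hand side and matching it to that half by definition.
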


Before proving this lemma, let us show how to derive the desired vanishing $B^m_{g,2g+m-1}=0$ from it. Indeed, consider Equation~\eqref{eq:1ptInd} for $\ell=g+1$. Note that since $m\geq 2$, we have  $\Gamma^{g,m}_{d\mathop{|} k} =0$ for $g+1<k$. Hence the left-hand side of Equation~\eqref{eq:1ptInd} with $\ell\geq g$, so in particular for $\ell=g+1$, is equal to $B^m_{g,2g+m-1}$. 

On the right-hand side, we have $\gamma^{g_1}_{d_1\mathop{|} g+1} =0$ in the first term (since $g_1\leq g<g+1$) and $\tgamma^{g_2,m}_{d_2\mathop{|} g+1} =0$ in the second term (since $g_2+1<g+1$). Consider the third term. We have $k_1+k_3 = g+1$ and $g_1+g_3<g$, so either $g_1<k_1$, or $g_3+1<k_3$, or both. Hence in the third term either $\gamma^{g_1}_{d_1\mathop{|} k_1} =0$, or  $\tgamma^{g_3,m}_{d_3\mathop{|} k_3} =0$, or both. Hence the right-hand side of Equation~\eqref{eq:1ptInd} with $\ell=g+1$ is equal to $0$.

\begin{proof}[Proof of Lemma~\ref{lem:1ptInductiveLemma}]
We prove the lemma by induction. The base is the $\ell =1$ case. It is equivalent to  
\begin{gather} \label{eq:1ptInd-base}
\Gamma^{g,m}_{2g+m-1 \mathop{|} 1} = \sum_{\substack{g_1+g_2=g\\g_1\geq 1,\,g_2\geq 0 \\ d_1+d_2 = 2g+m-2\\d_1,d_2\geq 0}}(-1)^{d_1}\gamma^{g_1}_{d_1\mathop{|}1} \diamond 
\vcenter{\xymatrix@C=14pt@R=5pt{
& & {}\ar@{.}@/^/[dd] \\ 
&*+[o][F-]{{g_2}}\ar@{-}[ru]*{{}_{\,2}}\ar@{-}[rd]*{{}_{\hspace{0.25cm} m+1}}\ar@{-}[l]*{{}_{1\,}}_{\psi^{d_2}} \ar@{-}[r] & \\
& &	}} + \sum_{\substack{g_1+g_2=g\\g_1\geq 1,\,g_2\geq 0\\d_1+d_2 = 2g+m-2\\d_1,d_2\geq 0}}(-1)^{d_1} 
\xymatrix@C=14pt@R=0pt{
&*+[o][F-]{{g_1}}\ar@{-}[l]*{{}_{1\,}}\ar@{-}[r]*{{}_{\,2}}^{\,\,\psi^{d_1}} & } \diamond \tgamma^{g_2,m}_{d_2\mathop{|}1},
\end{gather}
which is just a way to rewrite Equation~\eqref{eq:LP2} for $r=0$. Indeed, on the right-hand side of Equation~\eqref{eq:LP2} for $r=0$, we have
\begin{align*}
\sum_{\substack{g_1+g_2=g,\,g_1\geq 1,\,g_2\geq 0\\ d_1+d_2=2g+m-2,\, d_1,d_2\geq 0}} 
(-1)^{d_1}
\xymatrix@C=14pt@R=0pt{
&*+[o][F-]{{g_1}}\ar@{-}[l]*{{}_{1\,}}\ar@{-}[r]*{{}_{\,2}}^{\,\,\psi^{d_1}} & } \diamond
\vcenter{\xymatrix@C=14pt@R=5pt{
& & {}\ar@{.}@/^/[dd] \\ 
&*+[o][F-]{{g_2}}\ar@{-}[ru]*{{}_{\,2}}\ar@{-}[rd]*{{}_{\hspace{0.25cm} m+1}}\ar@{-}[l]*{{}_{1\,}}_{\psi^{d_2}} \ar@{-}[r] & \\
& &	}},
\end{align*}
and either $d_1\leq 2g_1-1$, which gives us the first summand on the right-hand side of~\eqref{eq:1ptInd-base}, or $d_2\leq 2g_2+m-2$, which gives us the second summand on the right-hand side of~\eqref{eq:1ptInd-base}.

For the induction step, we have to prove that
\allowdisplaybreaks
\begin{align} 
& \underline{(-1)^{\ell+1}\Gamma^{g,m}_{2g+m-1\mathop{|} \ell+2}} + \boxed{\sum_{\substack{g_1+g_2=g,\, g_1\geq 1,\, g_2\geq 0 \\ d_1+d_2 = 2g+m-2,\, d_1,d_2\geq 0}} (-1)^{d_1+\ell-1}\gamma^{g_1}_{d_1\mathop{|}\ell} \diamond 
\vcenter{\xymatrix@C=14pt@R=5pt{
& & {}\ar@{.}@/^/[dd] \\ 
&*+[o][F-]{{g_2}}\ar@{-}[ru]*{{}_{\,2}}\ar@{-}[rd]*{{}_{\hspace{0.25cm} m+1}}\ar@{-}[l]*{{}_{1\,}}_{\psi^{d_2}} \ar@{-}[r] & \\
& &	}}}  \label{eq:1ptInd-2}\\
& + \underline{\underline{\sum_{\substack{g_1+g_2=g,\, g_1\geq 1,\,g_2\geq 0 \\ d_1+d_2 = 2g+m-2,\, d_1,d_2\geq 0}}(-1)^\ell
\left( 
\xymatrix@C=14pt@R=0pt{
&*+[o][F-]{{g_1}}\ar@{-}[r]*{{}_{\,2}}\ar@{-}[l]*{{\ }_{1\,}}_{\psi^{d_1}} & } + (-1)^{d_1+1}
\xymatrix@C=14pt@R=0pt{
&*+[o][F-]{{g_1}}\ar@{-}[l]*{{}_{1\,}}\ar@{-}[r]*{{}_{\,2}}^{\,\,\psi^{d_1}} & }\right) \diamond \tgamma^{g_2,m}_{d_2\mathop{|}\ell}}}  \label{eq:1ptInd-3}\\
& + \sum_{\substack{g_1+g_2+g_3=g\\ g_1,g_2\geq 1,\, g_3\geq 0\\ d_1+d_2+d_3  = 2g+m-3\\ d_1,d_2,d_3\geq 0\\ k_1+k_3=\ell,\, k_1,k_3\geq 1}}
(-1)^{d_1+\ell-1} \gamma^{g_1}_{d_1\mathop{|}k_1}\diamond 
\left(\xymatrix@C=14pt@R=0pt{
&*+[o][F-]{{g_2}}\ar@{-}[r]*{{}_{\,2}}\ar@{-}[l]*{{}_{1\,}}_{\psi^{d_2}} & } + (-1)^{d_2+1}
\xymatrix@C=14pt@R=0pt{
&*+[o][F-]{{g_2}}\ar@{-}[l]*{{}_{1\,}}\ar@{-}[r]*{{}_{\,2}}^{\,\,\psi^{d_2}} & } \right)
\diamond \tgamma^{g_3,m}_{d_3\mathop{|}k_3} \label{eq:1ptInd-4}\\
&=\;\boxed{\sum_{\substack{g_1+g_2=g,\, g_1\geq 1,\, g_2\geq 0 \\ d_1+d_2 = 2g+m-2,\, d_1,d_2\geq 0}} (-1)^{d_1+\ell}\gamma^{g_1}_{d_1\mathop{|}\ell+1} \diamond 
\vcenter{\xymatrix@C=14pt@R=5pt{
& & {}\ar@{.}@/^/[dd] \\ 
&*+[o][F-]{{g_2}}\ar@{-}[ru]*{{}_{\,2}}\ar@{-}[rd]*{{}_{\hspace{0.25cm} m+1}}\ar@{-}[l]*{{}_{1\,}}_{\psi^{d_2}} \ar@{-}[r] & \\
& &	}}}  \label{eq:1ptInd-5}\\
& \hphantom{=|}\; + \underline{\sum_{\substack{g_1+g_2=g\\ g_1\geq 1,\,g_2\geq 0 \\ d_1+d_2 = 2g+m-2\\ d_1,d_2\geq 0}}\hspace{-0.2cm}(-1)^{\ell+1}
\xymatrix@C=14pt@R=0pt{
&*+[o][F-]{{g_1}}\ar@{-}[r]*{{}_{\,2}}\ar@{-}[l]*{{\ }_{1\,}}_{\psi^{d_1}} & }\diamond \tgamma^{g_2,m}_{d_2\mathop{|}\ell+1}} + \hspace{-0.1cm}\underline{\underline{\sum_{\substack{g_1+g_2=g\\ g_1\geq 1,\,g_2\geq 0 \\ d_1+d_2 = 2g+m-2\\ d_1,d_2\geq 0}}\hspace{-0.2cm}(-1)^{d_1+\ell}
\xymatrix@C=14pt@R=0pt{
&*+[o][F-]{{g_1}}\ar@{-}[l]*{{}_{1\,}}\ar@{-}[r]*{{}_{\,2}}^{\,\,\psi^{d_1}} & } \diamond \tgamma^{g_2,m}_{d_2\mathop{|}\ell+1}}}  \label{eq:1ptInd-6}\\
& \hphantom{=|}\;+ \underbrace{\sum_{\substack{g_1+g_2+g_3=g,\, g_1,g_2\geq 1,\, g_3\geq 0\\ d_1+d_2+d_3  = 2g+m-3,\, d_1,d_2,d_3\geq 0\\ k_1+k_3=\ell+1,\, k_1,k_3\geq 1}}
(-1)^{d_1+\ell} \gamma^{g_1}_{d_1\mathop{|}k_1}\diamond 
\xymatrix@C=14pt@R=0pt{
&*+[o][F-]{{g_2}}\ar@{-}[r]*{{}_{\,2}}\ar@{-}[l]*{{}_{1\,}}_{\psi^{d_2}} & }\diamond \tgamma^{g_3,m}_{d_3\mathop{|}k_3}}_{=:\sum C_{k_1,k_3}=\underline{\underline{C_{1,\ell}}}+\sum_{k_1\ge 2}C_{k_1,k_3}}  \label{eq:1ptInd-7}\\
&\hphantom{=|}\; + \underbrace{\sum_{\substack{g_1+g_2+g_3=g,\, g_1,g_2\geq 1,\, g_3\geq 0\\ d_1+d_2+d_3  = 2g+m-3,\, d_1,d_2,d_3\geq 0\\ k_1+k_3=\ell+1,\, k_1,k_3\geq 1}}
(-1)^{d_1+d_2+\ell+1} \gamma^{g_1}_{d_1\mathop{|}k_1}\diamond 
\xymatrix@C=14pt@R=0pt{
&*+[o][F-]{{g_2}}\ar@{-}[l]*{{}_{1\,}}\ar@{-}[r]*{{}_{\,2}}^{\,\,\psi^{d_2}} & } \diamond \tgamma^{g_3,m}_{d_3\mathop{|}k_3}}_{=:\sum D_{k_1,k_3}=\boxed{\scriptstyle{D_{\ell,1}}}+\sum_{k_3\ge 2}D_{k_1,k_3}}. \label{eq:1ptInd-8}
\end{align}

Note that the first summand in~\eqref{eq:1ptInd-2} is equal to the first summand in~\eqref{eq:1ptInd-6} by definition.

Consider the second summand in~\eqref{eq:1ptInd-2}. Since $d_1\leq 2g_1-1$, we have $d_2\geq 2g_2+m-1$; hence we can apply the Liu--Pandharipande relation~\eqref{eq:LP2}. We obtain
\begin{align}\label{eq:1pt-indstep-1line}
\sum_{\substack{g_1+g_2+g_3=g,\, g_1,g_2\geq 1,\, g_3\geq 0 \\ d_1+d_2+d_3 = 2g+m-3,\, d_1,d_2,d_3\geq 0}}(-1)^{d_1+d_2+\ell-1}
\gamma^{g_1}_{d_1\mathop{|}\ell} \diamond	
\xymatrix@C=14pt@R=0pt{
&*+[o][F-]{{g_2}}\ar@{-}[l]*{{}_{1\,}}\ar@{-}[r]*{{}_{\,2}}^{\,\,\psi^{d_2}} & } \diamond
\vcenter{\xymatrix@C=14pt@R=5pt{
& & {}\ar@{.}@/^/[dd] \\ 
&*+[o][F-]{{g_3}}\ar@{-}[ru]*{{}_{\,2}}\ar@{-}[rd]*{{}_{\hspace{0.25cm} m+1}}\ar@{-}[l]*{{}_{1\,}}_{\psi^{d_3}} \ar@{-}[r] & \\
& &	}}.
\end{align}
Note that either $d_1+d_2\leq 2(g_1+g_2)-2$ or $d_3\leq 2g_3+m-2$, but not both. Hence~\eqref{eq:1pt-indstep-1line} is equal to the sum of~\eqref{eq:1ptInd-5} and the $k_3=1$ term in~\eqref{eq:1ptInd-8}.

Consider~\eqref{eq:1ptInd-3}. Since $d_2\leq 2g_2+m-2$, we have $d_1\geq 2g_1$, hence we can apply the Liu--Pandharipande relation~\eqref{eq:LP1}. We obtain
\begin{align}\label{eq:1pt-indstep-secondline}
\sum_{\substack{g_1+g_2+g_3=g,\, g_1,g_2\geq 1,\, g_3\geq 0 \\ d_1+d_2+d_3 = 2g+m-3,\, d_1,d_2,d_3\geq 0}} 
(-1)^{\ell+d_1}
\xymatrix@C=14pt@R=0pt{
&*+[o][F-]{{g_1}}\ar@{-}[l]*{{}_{1\,}}\ar@{-}[r]*{{}_{\,2}}^{\,\,\psi^{d_1}} & } \diamond
\xymatrix@C=14pt@R=0pt{
&*+[o][F-]{{g_2}}\ar@{-}[r]*{{}_{\,2}}\ar@{-}[l]*{{}_{1\,}}_{\psi^{d_2}} & } \diamond
\tgamma^{g_3,m}_{d_3\mathop{|}\ell}.
\end{align}
Note that either $d_1\leq 2g_1-1$ or $d_2+d_3\leq 2(g_2+g_3)+m-3$, but not both. Hence~\eqref{eq:1pt-indstep-secondline} is equal to the sum of the $k_1=1$ term in~\eqref{eq:1ptInd-7} and the second summand in~\eqref{eq:1ptInd-6}.

Finally, consider~\eqref{eq:1ptInd-4}. Since $d_1\leq 2g_1-1$ and $d_3\leq 2g_3+m-2$, we have $d_2\geq 2 g_2$; hence we can apply the Liu--Pandharipande relation~\eqref{eq:LP1}. We obtain
\begin{align}\label{eq:1pt-indstep-thirdline}
\sum_{\substack{g_1+g_2+g_3+g_4=g\\g_1,g_2,g_3\geq 1,\, g_3\geq 0 \\ d_1+d_2+d_3+d_4 = 2g+m-4\\ d_1,d_2,d_3,d_4\geq 0\\ k_1+k_4=\ell,\, k_1,k_4\geq 1}} (-1)^{d_1+d_2+\ell-1}
\gamma^{g_1}_{d_1\mathop{|}k_1} \diamond
\xymatrix@C=14pt@R=0pt{
&*+[o][F-]{{g_2}}\ar@{-}[l]*{{}_{1\,}}\ar@{-}[r]*{{}_{\,2}}^{\,\,\psi^{d_3}} & } \diamond
\xymatrix@C=14pt@R=0pt{
&*+[o][F-]{{g_3}}\ar@{-}[r]*{{}_{\,2}}\ar@{-}[l]*{{}_{1\,}}_{\psi^{d_3}} & } \diamond
\tgamma^{g_4,m}_{d_4\mathop{|}k_4}.
\end{align}
Note that either $d_1+d_2\leq 2(g_1+g_2)-2$ or $d_3+d_4\leq 2(g_3+g_4)+m-3$, but not both. Hence~\eqref{eq:1pt-indstep-thirdline} is equal to the sum of the $k_1\ge 2$ terms in~\eqref{eq:1ptInd-7} and the $k_3\ge 2$ terms in~\eqref{eq:1ptInd-8}.

Summarizing the computations above, we see that the sum of \eqref{eq:1ptInd-2}, \eqref{eq:1ptInd-3}, and~\eqref{eq:1ptInd-4} is equal to the sum of~\eqref{eq:1ptInd-5}, \eqref{eq:1ptInd-6},~\eqref{eq:1ptInd-7}, and~\eqref{eq:1ptInd-8}. This proves the  induction step and completes the proof of Lemma~\ref{lem:1ptInductiveLemma}.
\end{proof}

\subsection{Proof of Conjecture~\ref{conjecture2} for $\boldsymbol{n=1}$}

We have to check that $B^1_{g,d}=A^1_{g,d}$ for any $g\ge 1$ and $d\ge 2g$. From the definition of the class $B^1_{g,d}$, it follows that 
$$
B^1_{g,d}=\psi_1^{d-2g}B^1_{g,2g}.
$$
On the other hand, setting $k:=d-2g$, we have
$$
A^1_{g,d}=\sum_{\substack{g_1+\cdots+g_k=g\\g_1,\ldots,g_k\ge 1}}\left(\prod_{i=0}^{k-1}\frac{g_{k-i}}{g_1+\cdots+g_{k-i}}\right)\lambda_{g_1}\DR_{g_1}(1,-1)\diamond \lambda_{g_2}\DR_{g_2}(1,-1)\diamond\cdots\diamond\lambda_{g_k}\DR_{g_k}(1,-1).
$$
Using the formula 
$$
\psi_1\lambda_g\DR_g(1,-1)=\sum_{\substack{g_1+g_2=g\\g_1,g_2\ge 1}}\frac{g_2}{g}\lambda_{g_1}\DR_{g_1}(1,-1)\diamond\lambda_{g_2}\DR_{g_2}(1,-1),
$$
\textit{cf.} \cite[Theorem~4]{BSSZ15}, it is easy to check by induction that
$$
A^1_{g,d}=\psi_1^{d-2g}A^1_{g,2g}.
$$
We conclude that it is sufficient to prove that 
\begin{gather}\label{eq:main equation for n1}
B^1_{g,2g} = \lambda_g\DR_g(1,-1).
\end{gather} 

As a preliminary step, in Section~\ref{subsection:a new relation}, we derive a new tautological relation using a variation of the method from the paper~\cite{LP11} by Liu and Pandharipande, and then in Section~\ref{subsection:proof of main equation}, we use it to prove Equation~\eqref{eq:main equation for n1}.

\subsubsection{A new relation via the Liu--Pandharipande method}\label{subsection:a new relation} 

We use the same notation as in Section~\ref{subsection:VanishingOnePoint}, with the following addendum to the notation. Let
$$
\xymatrix@C=17pt@R=0pt{
& *+[o][F-,]{{g}}\ar@{-}[l]*{{}_{1\,}}\ar@{-}[r]*{{}_{\,2}} & 
}
\coloneqq \lambda_g\DR_g(-1,1).
$$

\begin{theorem} 
For any $g\geq 1$ and $r\geq 0$, we have
\begin{gather}\label{eq:NewRelation}
\xymatrix@C=17pt@R=0pt{
& *+[o][F-,]{{g}}\ar@{-}[l]*{{}_{1\,}}_<<{\psi^{r}\,\,}\ar@{-}[r]*{{}_{\,2}} & 
} +
(-1)^{2g+r+1}\xymatrix@C=17pt@R=0pt{
& *+[o][F-]{{g}}\ar@{-}[r]*{{}_{\,2}}^<<<<{\psi^{2g+r}}\ar@{-}[l]*{{}_{1\,}} & 
} =
\sum_{\substack{g_1+g_2=g\\g_1,g_2\ge 1\\d_1+d_2=2g_1+r-1\\d_1,d_2\ge 0}}(-1)^{d_1}\xymatrix@C=12pt@R=0pt{
& *+[o][F-]{{g_1}}\ar@{-}[rr]^<<<{\psi^{d_1}}\ar@{-}[l]*{{}_{1\,}} & & *+[o][F-,]{{g_2}}\ar@{}[ll]_<<<{\psi^{d_2}}\ar@{-}[r]*{{}_{\,2}} & 
}.
\end{gather}
\end{theorem}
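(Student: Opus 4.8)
The plan is to establish \eqref{eq:NewRelation} by $\mbC^*$-virtual localization on a suitable moduli space of degree-one stable relative maps to $(\mbP^1,\infty)$, following the strategy by which Liu and Pandharipande obtain \eqref{eq:LP1}, but with an additional insertion of the Hodge class $\lambda_g$, which is $\mbC^*$-invariant, so that double ramification cycles appear through their very definition. Concretely, I would work with genus-$g$ degree-one stable maps to $\mbP^1$, relative to $\infty$, together with additional marked points and incidence conditions chosen so that the forgetful-and-stabilize morphism lands in $\oM_{g,2}$ carrying the decorations appearing in \eqref{eq:NewRelation}; I equip $\mbP^1$ with the standard $\mbC^*$-action fixing $0$ and $\infty$ and let $t$ be the equivariant parameter. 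Capping the virtual class with $\lambda_g\cdot\psi_1^r$ and pushing it forward along this proper morphism produces a class that is independent of~$t$; hence in the localization expansion every power of~$t$ other than~$t^0$ must cancel, and extracting that cancellation will be exactly \eqref{eq:NewRelation}.

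The next step is the fixed-locus analysis, using the localization formula reviewed in the appendix. Because the degree is one, each $\mbC^*$-fixed locus decomposes into a contracted genus-$g_1$ subcurve over $0$, a single edge $\mbP^1$ mapping isomorphically onto $\mbP^1$, and a genus-$g_2$ piece over $\infty$ with $g_1+g_2=g$; the relative structure at $\infty$ forces this last piece to be a rubber map to a chain of $\mbP^1$'s, so that — after restricting $\lambda_g$, which becomes $\lambda_{g_1}\lambda_{g_2}$ across the separating node, and simplifying the Euler class of the virtual normal bundle — its contribution is a multiple of $\lambda_{g_2}\DR_{g_2}(-1,1)$. The two extreme loci, $g_1=g$ and $g_2=g$, yield the two classes on the left of \eqref{eq:NewRelation}: all the genus over $\infty$ produces $\psi_1^r\lambda_g\DR_g(-1,1)$, while all the genus over $0$ (trivial map over $\infty$) produces $(-1)^{2g+r+1}\psi_2^{2g+r}$ times the plain genus-$g$ vertex. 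The remaining loci produce the sum on the right, with the edge decorations $\psi^{d_1}$ and $\psi^{d_2}$ and the sign $(-1)^{d_1}$ coming out of the vertex contributions and the smoothing of the two nodes, and with the constraint $d_1+d_2=2g_1+r-1$ — with $2g_1$ rather than $2g$ — emerging from the accounting of the powers of~$t$, since the genus-$g_2$ side has already absorbed $2g_2$ units of cohomological degree through $\lambda_{g_2}\DR_{g_2}(-1,1)$. I would treat the base case $r=0$ by hand and check it against the target identity \eqref{eq:main equation for n1} and against \cite{BHIS21}.

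The hard part will be the verification that the ``over-$\infty$'' contribution of the genus-$g$ (respectively genus-$g_2$) piece, together with its full virtual normal Euler class and the inserted $\lambda_g$, collapses to precisely $\lambda_g\DR_g(-1,1)$ (respectively $\lambda_{g_2}\DR_{g_2}(-1,1)$) times the correct power of a psi-class, with no residual Hodge classes surviving. This is where Mumford's relation $c_t(\mbE)\,c_{-t}(\mbE^\vee)=1$ enters: it is needed to annihilate the unwanted Hodge-class factors coming from the moving part of the Hodge bundle near the distinguished node, exactly as in the $\lambda_g$-weighted localization computations of \cite{BSSZ15}. Once this collapse is in place, the remaining work — tracking all the signs, the precise $t$-degree bookkeeping that produces the shift from $2g$ to $2g_1$, and the vanishing of the potential extra boundary contributions — is routine though delicate, and collecting the coefficient of the relevant power of~$t$ yields \eqref{eq:NewRelation}.
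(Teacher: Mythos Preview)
Your overall architecture is right: the paper also localizes on $\oM_{g,1}(\mbP^1,1)$, degree one relative to $\infty$, and the rubber contribution over $\infty$ is exactly what produces $\lambda_{g_2}\DR_{g_2}(1,-1)$. But the equivariant integrand you propose is wrong, and this is a genuine gap, not bookkeeping.

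You insert the \emph{non-equivariant} class $\lambda_g\psi_1^r$. On a mixed fixed locus $\oF_{\Gamma_{g_1,g_2}}$, the Hodge class splits as $\lambda_{g_1}\lambda_{g_2}$, and the over-$0$ vertex contributes
\[
\frac{\lambda_{g_1}\,\Lambda_{g_1}^\vee(u)}{u-\psi}\,.
\]
This does \emph{not} collapse via Mumford: the relation $\Lambda_{g_1}^\vee(u)\Lambda_{g_1}^\vee(-u)=(-1)^{g_1}u^{2g_1}$ needs two full Hodge polynomials, and a bare $\lambda_{g_1}$ is not one of them. You would be left with classes $\lambda_{g_1}\lambda_i$, $0\le i<g_1$, on the genus-$g_1$ vertex, and the relation would not be \eqref{eq:NewRelation}. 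Your appeal to Mumford ``near the distinguished node'' on the $\infty$-side is misplaced: by the localization formula reviewed in the appendix, the virtual normal bundle carries $\Lambda^\vee$ factors only at the over-$0$ stable vertices, not on the rubber side.

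What the paper actually inserts is $\ev_2^*([0])\cdot e_{\mbC^*}(B)$ with $B=R^1\pi_*f^*\mcO_{\mbP^1}(-1)$, an \emph{equivariant} rank-$g$ bundle. Restricted to the over-$0$ vertex one has $B\cong\mbE_{g_1}^\vee\otimes\mbC_{-1}$, so $e_{\mbC^*}(B)$ contributes $\Lambda_{g_1}^\vee(-u)$ there, and \emph{this} is what pairs with the normal-bundle $\Lambda_{g_1}^\vee(u)$ to give the clean $(-1)^{g_1}u^{2g_1}$. Over the rubber side $B\cong\mbE_{g_2}^\vee$ with trivial $\mbC^*$-action, so $e_{\mbC^*}(B)$ there is $(-1)^{g_2}\lambda_{g_2}$, which is how the top Hodge class enters; it is not put in by hand. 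The factor $\ev_2^*([0])$ kills the loci where marked point $2$ sits over $\infty$. Finally, no $\psi_1^r$ is inserted: the parameter $r$ arises by extracting the coefficient of $u^{-r-1}$ in the pushforward to $\oM_{g,2}$.
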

\begin{proof}
The proof is by localization in the moduli space of stable relative maps to $(\mbP^1,\infty)$, which we review in detail in the appendix.

We consider the moduli space $\oM_{g,n}(\mbP^1,\mu)$ of stable relative maps to $(\mbP^1,\infty)$ with $n=1$ and $\mu=1$. The source curves of these maps have two marked points, and we assume that the marked point corresponding to the only part of $\mu$ is labeled by $1$. Consider the $\mbC^*$-action on~$\mbP^1$ given~by
$$
t\cdot [x,y]:=[tx,y],\quad [x,y]\in\mbP^1,\; t\in\mbC^*,
$$
and the induced $\mbC^*$-action on $\oM_{g,1}(\mbP^1,1)$. Denote by  
$$
\pi\colon U\lra\oM_{g,1}(\mbP^1,1),\quad f\colon U\lra\mbP^1,
$$
the $\mbC^*$-equivariant universal curve and the universal map, respectively. Consider a lifting of the $\mbC^*$-action on $\mbP^1$ to the line bundle $\mcO_{\mbP^1}(-1)\to\mbP^1$ with fiber weights $-1$, $0$ over the fixed points $0,\infty\in\mbP^1$, respectively. The sheaf 
$$
B:=R^1\pi_*f^*(\mcO_{\mbP^1}(-1))\lra \oM_{g,1}(\mbP^1,1)
$$
is a $\mbC^*$-equivariant vector bundle of rank $g$. 

We consider the following $\mbC^*$-equivariant cohomology class on $\oM_{g,1}(\mbP^1,1)$:
$$
I_g:=\ev_2^*([0])e_{\mbC^*}(B)\in H^{2(g+1)}_{\mbC^*}(\oM_{g,1}(\mbP^1,1)),
$$
where $\ev_2\colon\oM_{g,1}(\mbP^1,1)\to\mbP^1$ is the evaluation map corresponding to the second marked point, $[0]\in H^2_{\mbC^*}(\mbP^1)$ is the $\mbC^*$-equivariant cohomology class dual to the point $0\in\mbP^1$, and by $e_{\mbC^*}(\cdot)$ we denote the $\mbC^*$-equivariant Euler class of a $\mbC^*$-equivariant vector bundle. Denote by 
$$
\epsilon\colon\oM_{g,1}(\mbP^1,1)\lra\oM_{g,2}
$$
the forgetful map, which is $\mbC^*$-equivariant with respect to the trivial $\mbC^*$-action on $\oM_{g,2}$. Consider the pushforward
\begin{gather}\label{eq:pushforward with zero constant term}
\epsilon_*(I_g\cap[\oM_{g,1}(\mbP^1,1)]^\vir)\in H^{\mbC^*}_{2g}(\oM_{g,2}),
\end{gather}
where $[\oM_{g,1}(\mbP^1,1)]^\vir$ is the $\mbC^*$-equivariant virtual fundamental class of the moduli space $\oM_{g,1}(\mbP^1,1)$. Since $H^{\mbC^*}_*(\oM_{g,2})=H_*(\oM_{g,2})\otimes_{\mbC}\mbC[u]$, where $u$ is the generator of the equivariant cohomology ring of a point, the class~\eqref{eq:pushforward with zero constant term} is a polynomial in $u$ with coefficients in the space~$H_*(\oM_{g,2})$. The class $I_g\cap[\oM_{g,1}(\mbP^1,1)]^\vir$ considered as an element of $H^{\mbC^*}_*(\oM_{g,1}(\mbP^1,1))\otimes_{\mbC[u]}\mbC[u,u^{-1}]$ can be computed using the localization formula~\eqref{eq:general localization formula}, and then using formulas~\eqref{eq:main localization formula 1} and~\eqref{eq:main localization formula 2}, one can get an explicit formula for the pushforward of this class to $\oM_{g,2}$ in terms of tautological classes. The fact that the coefficients of the negative powers of $u$ in the resulting expression vanish gives relations in the homology of $\oM_{g,2}$. Let us compute these relations explicitly. 

Consider the connected components of the $\mbC^*$-fixed point set $\oM_{g,1}(\mbP^1,1)^{\mbC^*}$. Apart from the component $\oF_0$ formed by the stable relative maps with the target equal to $\mbP^1$, we have components $\oF_\Gamma$ labeled by decorated bipartite graphs $\Gamma$ (see the appendix for details). Note that the decorated bipartite graphs $\Gamma$ such that $\oF_\Gamma\ne\emptyset$ have one of the following two forms:
\begin{align*}
&\Gamma_{g_1,g_2}:=\,\xymatrix@C=14pt@R=2pt{
& *+[o][F-]{{g_1}}\ar@{}[d]*{{}_0}\ar@{-}[rr]^<<<<<{\,\,\,\,1}\ar@{-}[l]*{{}_{2\,}} & & *+[o][F-]{{g_2}}\ar@{}[d]*{{}_\infty}\ar@{}[ll]\ar@{-}[r]*{{}_{\,1}} & \\
& & & &
}, && g_1\ge 0,\, g_2\ge 1,\\ 
&\tGamma_{g_1,g_2}:=\,
\vcenter{\xymatrix@C=14pt@R=2pt{
& & & \\
*+[o][F-]{{g_1}}\ar@{}[d]*{{}_0}\ar@{-}[rr]^<<<<<{\,\,\,\,1} & & *+[o][F-]{{g_2}}\ar@{}[d]*{{}_\infty}\ar@{}[ll]\ar@{-}[ru]*{{}_{\,1}}\ar@{-}[rd]*{{}_{\,2}} & \\
& & & }}
, && g_1,g_2\ge 0.
\end{align*}
Since $\iota_{\tGamma_{g_1,g_2}}^*(\ev_2^*([0]))=0$, we have $\iota_{\tGamma_{g_1,g_2}}^*(I_g)=0$, where we refer a reader to the appendix for a definition of spaces $\oM_{\Gamma}$ and natural surjective maps $\iota_0\colon\oM_{g,2}\to\oF_0$ and $\iota_\Gamma\colon\oM_{\Gamma}\to\oF_\Gamma$. 

Using Equations~\eqref{eq:covering of connected components} and~\eqref{eq:general localization formula}, we obtain the following equality in $H_*(\oM_{g,2})\otimes_{\mbC}\mbC[u,u^{-1}]$:
\begin{align*}
\epsilon_*(I_g\cap[\oM_{g,1}(\mbP^1,1)]^\vir=&
\;\epsilon_*\iota_{0*}\left(\iota^*_0\left(\frac{I_g}{e_{\mbC^*}(N_0^\vir)}\right)\cap[\oM_{g,2}]\right)\\
&+\sum_{\substack{g_1\ge 0,\,g_2\ge 1\\g_1+g_2=g}}\epsilon_*\iota_{\Gamma_{g_1,g_2}*}\left(\iota^*_{\Gamma_{g_1,g_2}}\left(\frac{I_g}{e_{\mbC^*}(N_{\Gamma_{g_1,g_2}}^\vir)}\right)\cap[\oM_{\Gamma_{g_1,g_2}}]^\vir\right),
\end{align*}
where $N^\vir_0$ and $N_{\Gamma_{g_1,g_2}}^\vir$ are the virtual normal bundles to the connected components~$\oF_0$ and~$\oF_{\Gamma_{g_1,g_2}}$, respectively. Let us now compute explicitly the contribution of each connected component.

Consider the component $\oF_0$. Note that we have an isomorphism $\iota_0^* B\cong\mbE^\vee_g\otimes\mbC_{-1}$ as $\mbC^*$-equivariant vector bundles, where by $\mbC_a$ we denote the trivial line bundle with $t\in\mbC^*$ acting on it by the multiplication by $t^a$. Therefore,
$$
\iota_0^* I_g=u\Lambda_g^\vee(-u),
$$
where
$$
\Lambda_g^\vee(u):=\sum_{i=0}^g(-1)^i\lambda_i u^{g-i}.
$$
Using Equation~\eqref{eq:main localization formula 1}, we obtain
\begin{align}
&\iota^*_0\left(\frac{I_g}{e_{\mbC^*}(N_0^\vir)}\right)=\frac{\Lambda^\vee_g(-u)\Lambda^\vee_g(u)}{u-\psi_1}=(-1)^g\frac{u^{2g-1}}{1-\frac{\psi_1}{u}},\notag\\
&\epsilon_*\iota_{0*}\left(\iota^*_0\left(\frac{I_g}{e_{\mbC^*}(N_0^\vir)}\right)\cap[\oM_{g,2}]\right)=(-1)^g\sum_{i\ge 0}u^{2g-1-i}\,
\xymatrix@C=17pt@R=0pt{
& *+[o][F-]{{g}}\ar@{-}[r]*{{}_{\,1}}^<<{\,\,\psi^{i}}\ar@{-}[l]*{{}_{2\,}} & }.
\label{eq:proof of DRLiuPan relation,1}
\end{align}

Now consider the component $\oF_{\Gamma_{0,g}}$ of the fixed point set. We have $\oM_{\Gamma_{0,g}}=\oM^\sim_{g,0}(\mbP^1,1,1)$, where $\oM^\sim_{g,0}(\mbP^1,1,1)$ is the moduli space of relative stable maps to rubber $(\mbP^1,0,\infty)$ and $\iota_{\Gamma_{0,g}}^*B\cong\mbE^\vee_g$ as $\mbC^*$-equivariant vector bundles, where we emphasize that $\mbC^*$ acts trivially on $\mbE^\vee_g$. Therefore,
$$
\iota_{\Gamma_{0,g}}^* I_g=u(-1)^g\lambda_g.
$$
Using Equation~\eqref{eq:main localization formula 2}, we obtain
\begin{gather*}
\iota^*_{\Gamma_{0,g}}\left(\frac{I_g}{e_{\mbC^*}(N_{\Gamma_{0,g}}^\vir)}\right)=\frac{(-1)^g\lambda_g}{-u-\tpsi_0}=(-1)^{g-1}\frac{u^{-1}\lambda_g}{1+\frac{\tpsi_0}{u}},
\end{gather*}
where $\tpsi_0$ is the first Chern class of the cotangent line bundle over $\oM^\sim_{g,0}(\mbP^1,1,1)$ corresponding to the point~$0$ in the target curve of a stable relative map (see  the appendix for details). Noting that $\tpsi_0=(\epsilon\circ\iota_{\Gamma_{0,g}})^*\psi_2$, we obtain
\begin{gather}\label{eq:proof of DRLiuPan relation,2}
\epsilon_*\iota_{\Gamma_{0,g}*}\left(\iota^*_{\Gamma_{0,g}}\left(\frac{I_g}{e_{\mbC^*}(N_{\Gamma_{0,g}}^\vir)}\right)\cap[\oM_{\Gamma_{0,g}}]^\vir\right)=(-1)^g\sum_{i\ge 0}u^{-i-1}(-1)^{i-1}\,
\xymatrix@C=17pt@R=0pt{
& *+[o][F-,]{{g}}\ar@{-}[r]*{{}_{\,1}}\ar@{-}[l]*{{}_{2\,}}_<<{\psi^i\,\,\,} & }.
\end{gather}

Consider the component $\oF_{\Gamma_{g_1,g_2}}$ with $g_1,g_2\ge 1$. We have $\oM_{\Gamma_{g_1,g_2}}=\oM_{g_1,2}\times\oM^\sim_{g_2,0}(\mbP^1,1,1)$ and $\iota_{\Gamma_{g_1,g_2}}^*B\cong\mbE^\vee_{g_1}\otimes\mbC_{-1}\oplus \mbE^\vee_{g_2}$ as $\mbC^*$-equivariant vector bundles. Therefore,
$$
\iota_{\Gamma_{g_1,g_2}}^* I_g=u\Lambda^\vee_{g_1}(-u)(-1)^{g_2}\lambda_{g_2}.
$$
Using Equation~\eqref{eq:main localization formula 2}, we obtain
\begin{align}
&\iota^*_{\Gamma_{g_1,g_2}}\left(\frac{I_g}{e_{\mbC^*}(N_{\Gamma_{g_1,g_2}}^\vir)}\right)=\frac{\Lambda^\vee_{g_1}(-u)(-1)^{g_2}\lambda_{g_2}\Lambda_{g_1}^\vee(u)}{(u-\psi_1)(-u-\tpsi_0)}=(-1)^{g-1}\frac{u^{2g_1-2}\lambda_{g_2}}{(1-\frac{\psi_1}{u})(1+\frac{\tpsi_0}{u})},\notag\\
&\epsilon_*\iota_{\Gamma_{g_1,g_2}*}\left(\iota^*_{\Gamma_{g_1,g_2}}\left(\frac{I_g}{e_{\mbC^*}(N_{\Gamma_{g_1,g_2}}^\vir)}\right)\cap[\oM_{\Gamma_{g_1,g_2}}]^\vir\right)\notag\\
&\hspace{3.5cm}=(-1)^g\sum_{d_1,d_2\ge 0}u^{2g_1-2-d_1-d_2}(-1)^{d_2-1}\,
\xymatrix@C=14pt@R=0pt{
& *+[o][F-]{{g_1}}\ar@{-}[rr]^<<<{\psi^{d_1}}\ar@{-}[l]*{{}_{2\,}} & & *+[o][F-,]{{g_2}}\ar@{}[ll]_<<<{\psi^{d_2}}\ar@{-}[r]*{{}_{\,1}} & 
}.\label{eq:proof of DRLiuPan relation,3}
\end{align}
Summing the coefficients of $u^{-r-1}$ in the expressions on the right-hand side of Equations~\eqref{eq:proof of DRLiuPan relation,1},~\eqref{eq:proof of DRLiuPan relation,2}, and~\eqref{eq:proof of DRLiuPan relation,3}, multiplying the sum by $(-1)^{g+r}$, applying the map $\perm_{1,1}^*$, and equating the result to $0$, we obtain exactly the desired relation.
\end{proof}

\subsubsection{Proof of Equation~\eqref{eq:main equation for n1}}\label{subsection:proof of main equation}

Let us prove by induction that
\begin{gather}\label{eq:extended new relation} 
\xymatrix@C=15pt@R=0pt{
&*+[o][F-,]{g}\ar@{-}[r]*{{}_{\,2}}\ar@{-}[l]*{{}_{1\,}} & }=\sum_{k=1}^{\ell}(-1)^{k+1}\perm^*_{1,1}\Gamma^{g,1}_{2g\mathop{|}k}+\hspace{-0.15cm}\sum_{\substack{g_1+g_2=g\\g_1,g_2\ge 1\\d_1+d_2=2g_1-1\\d_1,d_2\ge 0}}\hspace{-0.15cm}
(-1)^{d_2+\ell}\gamma^{g_1}_{d_1\mathop{|}\ell}\diamond
\xymatrix@C=15pt@R=0pt{
&*+[o][F-,]{g_2}\ar@{-}[r]*{{}_{\,2}}\ar@{-}[l]*{{}_{1\,}}_{\psi^{d_2}\,\,\,\,} & },\quad\ell\ge 1.
\end{gather}
Indeed, for $\ell=1$, this is exactly Equation~\eqref{eq:NewRelation} when $r=0$. For the induction step, we have to check that
\begin{align*}
&\sum_{\substack{g_1+g_2=g,\,g_1,g_2\ge 1\\d_1+d_2=2g_1-1,\,d_1,d_2\ge 0}}
(-1)^{d_2+\ell}\gamma^{g_1}_{d_1\mathop{|}\ell}\diamond
\xymatrix@C=15pt@R=0pt{
&*+[o][F-,]{g_2}\ar@{-}[r]*{{}_{\,2}}\ar@{-}[l]*{{}_{1\,}}_{\psi^{d_2}\,\,\,\,} & }\\
&\hspace{1.2cm}=(-1)^\ell\perm^*_{1,1}\Gamma^{g,1}_{2g\mathop{|}\ell+1}+\sum_{\substack{g_1+g_2=g,\,g_1,g_2\ge 1\\d_1+d_2=2g_1-1,\,d_1,d_2\ge 0}}
(-1)^{d_2+\ell+1}\gamma^{g_1}_{d_1\mathop{|}\ell+1}\diamond
\xymatrix@C=15pt@R=0pt{
&*+[o][F-,]{g_2}\ar@{-}[r]*{{}_{\,2}}\ar@{-}[l]*{{}_{1\,}}_{\psi^{d_2}\,\,\,\,} & },\quad \ell\ge 1,
\end{align*}
or equivalently
\begin{align*}
&\sum_{\substack{g_1+g_2=g\\g_1,g_2\ge 1\\d_1+d_2=2g_1-1\\d_1,d_2\ge 0}}
(-1)^{d_2}\gamma^{g_1}_{d_1\mathop{|}\ell}\diamond
\xymatrix@C=15pt@R=0pt{
&*+[o][F-,]{g_2}\ar@{-}[r]*{{}_{\,2}}\ar@{-}[l]*{{}_{1\,}}_{\psi^{d_2}\,\,\,\,} & }\\
&\hspace{0.7cm}=\sum_{\substack{g_1+g_2=g\\g_1,g_2\ge 1\\d_1+d_2=2g-1\\d_1,d_2\ge 0}}\hspace{-0.2cm}
\gamma^{g_1}_{d_1\mathop{|}\ell}\diamond
\xymatrix@C=15pt@R=0pt{
&*+[o][F-]{g_2}\ar@{-}[r]*{{}_{\,2}}^{\,\,\psi^{d_2}}\ar@{-}[l]*{{}_{1\,}} & }
+\hspace{-0.25cm}\sum_{\substack{g_1+g_2+g_3=g\\g_1,g_2,g_3\ge 1\\d_1+d_2+d_3=2(g_1+g_2)-2\\d_1,d_2,d_3\ge 0}}\hspace{-0.25cm}
(-1)^{d_3+1}\gamma^{g_1}_{d_1\mathop{|}\ell}\diamond
\xymatrix@C=15pt@R=0pt{
&*+[o][F-]{g_2}\ar@{-}[r]*{{}_{\,2}}^{\,\,\psi^{d_2}}\ar@{-}[l]*{{}_{1\,}} & }\diamond
\xymatrix@C=15pt@R=0pt{
&*+[o][F-,]{g_3}\ar@{-}[r]*{{}_{\,2}}\ar@{-}[l]*{{}_{1\,}}_{\psi^{d_3}\,\,\,\,} & },
\end{align*}
which follows from Equation~\eqref{eq:NewRelation}.

Since $\gamma^g_{d\mathop{|}k}=0$ if $k>g$, substituting $\ell=g$ in Equation~\eqref{eq:extended new relation}, we obtain
$$
\lambda_g\DR_g(1,-1)=\perm^*_{1,1}B^1_{g,2g},
$$
which, because of the obvious property $\perm^*_{1,1}\lambda_g\DR_g(1,-1)=\lambda_g\DR_g(1,-1)$, implies Equation~\eqref{eq:main equation for n1}.


\subsection{Proof of Conjecture~\ref{conjecture3} for $\boldsymbol{n=1}$}\label{section:conjecture3 for n1}

As  is proved in~\cite[Theorem~2.4]{BHIS21}, for $n=1$, Conjecture~\ref{conjecture3} follows from Conjecture~\ref{conjecture2}.


\section{Proof of Theorem~\ref{theorem:main2}}\label{section:genus 0}

Consider an arbitrary CohFT $c_{g,n}$. In~\cite{BPS12}, the authors proved that
$$
\frac{\d^2\mcF_0}{\d t^\alpha_a\d t^\beta_b}=\left.\frac{\d^2\mcF_0}{\d t^\alpha_a\d t^\beta_b}\right|_{t^\gamma_c=\delta_{c,0}v^{\top;\gamma}},\quad 1\le\alpha,\beta\le N,\,a,b\ge 0,
$$
where $v^{\top;\gamma}:=\eta^{\gamma\mu}\frac{\d^2\mcF_0}{\d t^\mu_0\d t^\un_0}$. Consider formal variables $v^1,\ldots,v^N$ and the associated algebra of differential polynomials $\cA_v$. Define 
$$
\Omega^{[0]}_{\alpha,a;\beta,b}:=\left.\frac{\d^2\mcF_0}{\d t^\alpha_a\d t^\beta_b}\right|_{t^\gamma_c=\delta_{c,0}v^\gamma},\quad P^{[0];\alpha}_{\beta,b}:=\eta^{\alpha\mu}\Omega^{[0]}_{\mu,0;\beta,b},\quad \oP^{[0]}_{\beta,b}:=(P^{[0];1}_{\beta,b},\ldots,P^{[0];N}_{\beta,b}).
$$
We see that for any $m\ge 2$, we have
$$
\frac{\d^m\mcF_0}{\d t^{\alpha_1}_{d_1}\cdots\d t^{\alpha_m}_{d_m}}=\left.\left(D_{\d_x\oP^{[0]}_{\alpha_1,d_1}}\cdots D_{\d_x\oP^{[0]}_{\alpha_{m-2},d_{m-2}}}\Omega^{[0]}_{\alpha_{m-1},d_{m-1};\alpha_m,d_m}\right)\right|_{v^\gamma_k=v^{\top;\gamma}_k},
$$
which by Theorem~\ref{theorem:reducing transformation for F-CohFT} implies that
$$
\int_{\oM_{0,m+n}}B^m_{0,(d_1,\ldots,d_n)}c_{0,m+n}(\otimes_{j=1}^{m+n} v_j)=0
$$
for any $m\ge 2$, $n\ge 1$, $\sum d_i\ge 2g+m-1$, and $v_j\in V$. Using the nondegeneracy of the Poincar\'e pairing in cohomology, we see that it is sufficient to prove the following statement.

\begin{proposition}\label{prop61}
Cohomology classes $\omega\in H^*(\oM_{0,n})$ of the form 
\begin{gather}\label{eq:omegaform}
\omega=c_{0,n}(\otimes_{j=1}^n v_j),\quad\text{where $c_{g,n}$ is an arbitrary CohFT and $v_j\in V$},
\end{gather}
linearly span the cohomology space $H^*(\oM_{0,n})$.
\end{proposition}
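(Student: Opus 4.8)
The plan is to exhibit, for every $n \geq 3$, enough CohFTs so that the classes $c_{0,n}(\otimes_j v_j)$ span $H^*(\oM_{0,n})$. Since $H^*(\oM_{0,n})$ is generated over $\mbC$ by boundary strata classes (indeed $H^*(\oM_{0,n})$ is tautological and the genus-$0$ tautological ring is spanned by products of boundary divisors, each such product being a pushforward under a boundary gluing map of a fundamental class), it suffices to show that each boundary stratum class $\xi_{\Gamma*}(1)$ for a stable tree $\Gamma \in G_{0,n}$ arises (up to a nonzero constant and summands coming from deeper strata, which one handles by an induction on codimension) as $c_{0,n}(\otimes_j v_j)$ for a suitable CohFT. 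The natural candidates are the CohFTs built from a semisimple Frobenius algebra — specifically the ones coming from the $\mbC^r$ with componentwise multiplication (equivalently, $r$ copies of the trivial one-dimensional CohFT glued by an idempotent basis), whose classes $c_{0,n}(e_{i_1} \otimes \cdots \otimes e_{i_n})$ are, by the splitting/gluing axiom, exactly the fundamental classes of the loci where the marked points are distributed among components according to the pattern of the indices.

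More concretely, first I would recall that for the one-dimensional trivial CohFT (the fundamental class, $c_{g,n} = 1$) one gets $c_{0,n}(\cdot) = 1 \in H^0$, and then take $V = \mbC^r$ with the diagonal (idempotent) Frobenius structure $\eta = \diag(1,\dots,1)$, unit $e = (1,\dots,1)$. By the factorization axiom applied along a stable tree, $c_{0,n}(e_{i_1}\otimes\cdots\otimes e_{i_n})$ is a sum over stable trees $\Gamma$ of the pushforwards $\xi_{\Gamma*}(1)$, where the sum ranges over those $\Gamma$ compatible with the partition of $\{1,\dots,n\}$ into the fibers of $j \mapsto i_j$; in fact, unwinding the axioms, $c_{0,n}(e_{i_1}\otimes\cdots\otimes e_{i_n})$ equals the class of the closure of the locus where points with equal indices are allowed to collide but points with distinct indices are separated by at least one node — a product of boundary divisors. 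Varying the coloring $i_1,\dots,i_n$ and $r$, these classes, together with their intersections (which are themselves realized by the same CohFTs with more colors, or by tensor products of CohFTs, which are again CohFTs), generate the full span of products of boundary divisors, hence all of $R^*(\oM_{0,n}) = H^*(\oM_{0,n})$. The induction on codimension is used to go from "$c_{0,n}(\cdots)$ is a boundary divisor plus lower strata" to "each individual boundary stratum is in the span."

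The step I expect to be the main obstacle is making precise the claim that the diagonal-algebra CohFT classes, as the coloring varies, actually span \emph{all} boundary strata (not merely the divisors and their naive products) — i.e. controlling the "plus deeper strata" error terms and running the codimension induction cleanly. One clean way around this: use that $H^*(\oM_{0,n})$ is spanned by monomials in the boundary divisors $\delta_{I}$ (with $I \subset \{1,\dots,n\}$), that each $\delta_I$ is realized as a CohFT class with a $2$-color algebra, and that a product $\delta_{I_1}\cdots\delta_{I_k}$ of compatible divisors is realized as a CohFT class with a $(k{+}1)$-color algebra (color the "nested" regions of the corresponding stratum) — so no induction is even needed, every generating monomial is directly a single $c_{0,n}(\otimes v_j)$. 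I would spell out this coloring recipe: given a stable tree $\Gamma$, color each vertex by a distinct color, set $v_j = e_{c(v)}$ where $v$ is the vertex carrying leg $j$; then the factorization axiom of the diagonal CohFT forces the class to be supported on strata refining $\Gamma$, and a dimension/degree count pins it to $\xi_{\Gamma*}(1)$ exactly. Verifying that last degree count — that no strictly deeper stratum can contribute because the colors are all distinct and the diagonal metric is $\delta_{ij}$ — is the technical heart, but it is a finite bookkeeping argument once the coloring is set up.
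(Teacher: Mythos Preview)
Your proposal has a fundamental gap: the CohFTs you propose to use---the degree-zero topological field theories on $V=\mbC^r$ with the diagonal metric and idempotent basis $e_1,\ldots,e_r$---only produce classes in $H^0(\oM_{0,n})$. For this CohFT one has $c_{0,n}(e_{i_1}\otimes\cdots\otimes e_{i_n})=1\in H^0$ if $i_1=\cdots=i_n$ and $=0$ otherwise. The factorization axiom does not say what you claim: it constrains the \emph{pullback} $\gl^* c_{0,n}(\cdots)$ along boundary gluing maps, not the class itself, and it certainly does not express $c_{0,n}(\cdots)$ as a pushforward from boundary strata. So your coloring recipe, in which the colors on the vertices of~$\Gamma$ are pairwise distinct, yields the zero class, not $\xi_{\Gamma*}(1)$. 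No amount of tensor products or induction on codimension can recover positive-degree classes from a collection of classes all lying in $H^0$.

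The outline you give---$H^*(\oM_{0,n})$ is generated as an algebra by the divisors $\delta_I$, and tensor products of CohFTs realize products in cohomology, so it suffices to obtain each individual $\delta_I$ as a linear combination of CohFT classes---is exactly the reduction the paper uses. What you are missing is a CohFT that actually produces $\delta_I$. The paper supplies this by acting on the diagonal CohFT on $\mbC^2$ with a nontrivial $R$-matrix $R(z)=\exp(r_1 z)$, $r_1$ off-diagonal, to obtain a one-parameter family $c^{(t)}_{g,n}$; the Givental graph-sum formula for this action then shows directly that $\Coef_{t}\, c^{(t)}_{0,n}(e_{i_1}\otimes\cdots\otimes e_{i_n})=\delta_I$ with $I=\{j:i_j=1\}$, and a Vandermonde interpolation extracts this coefficient as a finite linear combination of the honest CohFT classes $c^{(t_j)}_{0,n}(\cdots)$. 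In short, the $R$-matrix is precisely the device that injects the boundary-divisor contribution you were hoping to get for free from the factorization axiom.
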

\begin{proof}
For $I\subset\br{n}$, $2\le |I|\le n-2$, denote by $\delta_I\in H^2(\oM_{0,n})$ the cohomology class that is Poincar\'e dual to the fundamental class of the closure in $\oM_{0,n}$ of the subvariety formed by curves with exactly one node and marked points $x_i$ with $i\in I$ on one bubble, and marked points~$x_j$ with $j\in\br{n}\backslash I$ on the other bubble. The cohomology algebra of the moduli space~$\oM_{0,n}$ is generated by the classes $\delta_I$ (see, \textit{e.g.},~\cite{Man99}). Using the tensor product of CohFTs, we see that it is sufficient to prove the following special case of the proposition.

\begin{lemma}
Any cohomology class $\delta_I\in H^2(\oM_{0,n})$ can be obtained as a linear combination of cohomology classes of the form~\eqref{eq:omegaform}.
\end{lemma}
\begin{proof}
Consider the CohFT $c_{g,n}^{(0)}$ defined as follows:
\begin{itemize}
\item $V:=\mbC^2$ with standard basis $e_1,e_2$,

\item $e:=e_1+e_2$,

\item $\eta_{\alpha\beta}:=\delta_{\alpha\beta}$,

\item $c_{g,n}^{(0)}(\otimes_{i=1}^n e_{\alpha_i}):=
\begin{cases}
1&\text{if $\alpha_1=\ldots=\alpha_n$},\\
0&\text{otherwise}.
\end{cases}$
\end{itemize}
For $t\in\mbC$, define 
$$
R(z):=\exp(r_1 z),\quad\text{where } r_1:=
\begin{pmatrix}
0 & t\\
t & 0
\end{pmatrix}.
$$
Let us apply this $R$-matrix to the CohFT $c_{g,n}^{(0)}$ and denote the resulting CohFT (with unit) by~$c^{(t)}_{g,n}$ (we follow the approach from~\cite[Section~2]{PPZ15}). It is clear that $c^{(t)}_{g,n}(\otimes_{i=1}^n v_i)$ depends polynomially on $t$ and, moreover,
$$
\Coef_{t^k}c^{(t)}_{g,n}(\otimes_{i=1}^n v_i)\in H^{2k}(\oM_{g,n}).
$$
For any $0\le k\le 3g-3+n$, this coefficient can be obtained as a linear combination of the classes $c^{(t_j)}_{g,n}(\otimes_{i=1}^n v_i)$, $1\le j\le 3g-2+n$, where $t_1,\ldots,t_{3g-2+n}$ are arbitrary pairwise distinct complex numbers. Therefore, it is sufficient to check that 
\begin{gather}\label{eq:delta and c}
\delta_I=\Coef_t c^{(t)}_{0,n}(\otimes_{i=1}^n v_i)\quad\text{for some vectors $v_1,\ldots,v_n\in V$}.
\end{gather}
 
Let us prove that Equation~\eqref{eq:delta and c} is true for 
$$
v_i=
\begin{cases}
e_1&\text{if $i\in I$},\\
e_2&\text{otherwise}.
\end{cases}
$$
Indeed, we compute
\begin{align}
\Coef_t c^{(t)}_{0,n}(\otimes_{i=1}^n v_i)=&\;\frac{1}{2}\sum_{\substack{\tI\subset\br{n}\\2\le|\tI|\le n-2}}\gl_*\left(c^{(0)}_{0,|\tI|+1}(\otimes_{i\in\tI}v_i\otimes e_\mu)\delta^{\mu,3-\nu}\otimes c^{(0)}_{0,n-|\tI|+1}(\otimes_{j\in\br{n}\backslash\tI}v_j\otimes e_\nu)\right)\label{eq:R-action}\\
&-\sum_{i=1}^n\psi_i c^{(0)}_{0,n}(v_1\otimes\cdots\otimes r_1(v_i)\otimes\cdots\otimes v_n)\notag\\
&+\pi_*\left(\psi_{n+1}^2 c^{(0)}_{0,n+1}(v_1\otimes\cdots\otimes v_n\otimes r_1(e))\right),\notag
\end{align}
where $\gl\colon\oM_{0,|\tI|+1}\otimes\oM_{0,n-|\tI|+1}\to\oM_{0,n}$ is the obvious gluing map and $\pi\colon\oM_{0,n+1}\to\oM_{0,n}$ forgets the last marked point. From the definition of the CohFT $c_{g,n}^{(0)}$, it is easy to see that the first summand on the right-hand side of Equation~\eqref{eq:R-action} is exactly the class $\delta_I$, while the second and the third summands vanish. This completes the proof of the lemma.
\end{proof}

This concludes the proof of Proposition~\ref{prop61}. 
\end{proof}


\section{A reduction of the system of relations in the case \texorpdfstring{$\boldsymbol{m\ge 2}$}{m at least 2}}\label{section:reduction} 

Recall that in Section~\ref{section:equivalent formulation}, we introduced the polynomial $P_{g,n,m}(x_1,\ldots,x_n)\in R^*(\oM_{g,n+m})[x_1,\ldots,x_n]$ and considered the following cohomology classes:
\begin{align}
\tB^m_{g,\od}&=\Coef_{x_1^{d_1}\cdots x_n^{d_n}}P_{g,n,m}\label{eq:formula1 for tB}\\
&=\sum_{T\in\SRT^{(b,nd)}_{g,n,m;\circ}}(-1)^{|E(T)|}\ee_*[T,\od]\in R^{\sum d_i}(\oM_{g,n+m});\notag
\end{align}
see Equation~\eqref{eq:definition of tB} and Lemma~\ref{lemma:tB and coefficient of P}. By Theorem~\ref{theorem:equivalent relations for m2}, an equivalent way to state Conjecture~\ref{conjecture1} is to conjecture that
\begin{gather}\label{eq:systemofrelations}
\tB_{g,\od}^m = 0 \quad\text{for any $g\geq 0$, $n\geq 1$, $m\geq 2$, $\od\in\mbZ_{\ge 0}^n$ satisfying $d_1+\dots+d_n\geq 2g+m-1$}.
\end{gather}
One more equivalent reformulation reduces the number of relations one has to check.

\begin{theorem}\label{theorem:reductionSmall}
The system of relations~\eqref{eq:systemofrelations}, as a whole, is equivalent to the following one:
\begin{align}\label{eq:smallsystemofrelations}
\tB_{g,\od}^m = 0 \quad \text{for any $g\geq 0$, $n\geq 1$, $m\geq 2$, $d_i\ge 1$ satisfying $d_1+\dots+d_n=2g+m-1$}.
\end{align}
\end{theorem}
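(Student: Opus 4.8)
The plan is to prove Theorem~\ref{theorem:reductionSmall} by reducing the general system~\eqref{eq:systemofrelations} to its subsystem~\eqref{eq:smallsystemofrelations} in two independent steps: first eliminating the indices $i$ with $d_i=0$, and then eliminating the ``excess degree'' $\sum d_i - (2g+m-1) > 0$. Since~\eqref{eq:smallsystemofrelations} is manifestly a subcase of~\eqref{eq:systemofrelations}, only the forward implication needs work; I will assume~\eqref{eq:smallsystemofrelations} and deduce~\eqref{eq:systemofrelations}.

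\textbf{Step 1: removing zero-degree legs.} Suppose $\od=(d_1,\ldots,d_n)$ with, say, $d_n=0$. I would like to relate $\tB^m_{g,\od}$ to $\tB^m_{g,\od'}$ where $\od'=(d_1,\ldots,d_{n-1})$ has one fewer entry. The natural tool is the forgetful map $\pi\colon\oM_{g,n+m}\to\oM_{g,(n-1)+m}$ that forgets the leg $\sigma_n$ (note $2g-2+(n-1)+m>0$ is automatic here since $m\geq 2$). The cleanest route is through the polynomial reformulation: by Lemma~\ref{lemma:tB and coefficient of P}, $\tB^m_{g,\od}=\Coef_{x_1^{d_1}\cdots x_n^{d_n}}P_{g,n,m}$, so I want a relation expressing $\Coef_{x_n^0}P_{g,n,m}$ in terms of $P_{g,(n-1),m}$ under $\pi_*$ or $\pi^*$. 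Concretely, one expects that $\pi^* P_{g,(n-1),m}(x_1,\ldots,x_{n-1})$ equals the specialization of $P_{g,n,m}$ obtained by an appropriate substitution; unwinding Definition~\eqref{eq:formula for P}, the point is that a tree $T\in\SRT_{g,n,m}$ contributing to the $x_n^0$-coefficient must have $\sigma_n$ attached to a vertex in a way that lets one contract or split off $\sigma_n$ via $\pi$. Since the classes $P_{g,n,m}$ are built from $\psi$-classes pushed forward along $\xi$-maps, and $\pi$ commutes with these gluings in a controlled way governed by the string/dilaton comparison~\eqref{eq:Str1}, one obtains that vanishing of all $\tB^m_{g,\od'}$ with strictly positive entries forces $\tB^m_{g,\od}=0$ whenever some $d_i=0$, provided $\sum d_i\geq 2g+m-1$. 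An induction on the number of zero entries then reduces~\eqref{eq:systemofrelations} to the case of all $d_i\geq 1$.

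\textbf{Step 2: removing excess degree.} Now assume all $d_i\geq 1$ and $\sum d_i = 2g+m-1+r$ with $r>0$; I want to descend to $r=0$. The mechanism should be multiplication by a psi-class, in the spirit of Lemma~\ref{lem:m1}: one expects a relation of the shape $\tB^m_{g,\od}=\psi_{\sigma}^{r}\cdot \tB^m_{g,\od_0}$ (up to lower-order boundary corrections organized by the tree combinatorics), where $\od_0$ has $\sum (\od_0)_i = 2g+m-1$ and $\sigma$ is one of the frozen legs, or more precisely an identity relating $P_{g,n,m}$ in degree $>2g+m-2$ to products of psi-classes with the top-degree-$(2g+m-2)$ part. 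Since $\deg P_{g,n,m}\leq 2g+m-2$ is exactly the content of Theorem~\ref{theorem:equivalent relations for m2}(2) equivalent to Conjecture~\ref{conjecture1}, the cleanest formulation is: the coefficients of $P_{g,n,m}$ in multidegrees with $\sum d_i>2g+m-2$ are all generated, as tautological classes, by those in multidegree $\sum d_i = 2g+m-1$ with all $d_i\geq 1$, via multiplication by monomials in psi-classes at the legs. This is a purely combinatorial statement about the generating series $P_{g,n,m}$ and the string equation, provable by the same inductive unfolding used in Lemma~\ref{lemma:tB and coefficient of P}.

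\textbf{The main obstacle} I anticipate is Step~1: making the forgetful-map comparison precise when the forgotten leg $\sigma_n$ sits at a vertex that becomes unstable, or when several $d_i$ vanish simultaneously, because the pushforward $\pi_*$ (or pullback $\pi^*$) interacts nontrivially with the balanced/complete/admissible tree conditions defining $\SRT^{(b,nd)}_{g,n,m;\circ}$ and with the weights $(-1)^{|E(T)|}$. One must check that the bijection on trees induced by forgetting $\sigma_n$ respects these sign weights and the admissibility inequality~\eqref{eq:admissibility-SRT}, which requires the kind of careful level-by-level bookkeeping already carried out in the proofs of Theorem~\ref{theorem:equivalent relations for m2} and Theorem~\ref{theorem:arbitrary m reformulation}; I would model the argument closely on Proposition~\ref{prop36} and Lemma~\ref{lemma:tB and coefficient of P}, where exactly this type of reduction (via the string equation and contraction of vertices) is handled. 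Step~2 should then be routine given the explicit polynomial formula~\eqref{eq:formula for P} and the already-established equivalence in Theorem~\ref{theorem:equivalent relations for m2}.
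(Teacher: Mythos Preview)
Your two ingredients are the right ones, and they match the paper's: a pullback identity $\tB^m_{g,(d_1,\ldots,d_{n-1},0)}=\pi^*\tB^m_{g,(d_1,\ldots,d_{n-1})}$ to strip off zero indices, and a $\psi$-multiplication identity to lower the total degree. But you have the relative difficulty backwards, and Step~2 as you describe it has a real gap.

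Step~1 is in fact the clean one: the identity $P_{g,n,m}(x_1,\ldots,x_{n-1},0)=\pi^*P_{g,n-1,m}$ holds on the nose, by the pullback formula for $\psi$-classes on boundary strata; the potentially unstable vertices you worry about are exactly accounted for by the correction terms in $\pi^*\psi_h^p=\psi_h^p-(\text{boundary})$, and the signs $(-1)^{|E(T)|}$ match automatically. No admissibility bookkeeping is needed here because $\tB^m_{g,\od}$ is defined via $\SRT^{(b,nd)}$, not $\SRT^{(b,c,a)}$.

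Step~2 is \emph{not} purely combinatorial. The correct statement is that the difference $\tB^m_{g,(d_1,\ldots,d_i+1,\ldots,d_n)}-\psi_i\,\tB^m_{g,\od}$ (with $\psi_i$ at a \emph{regular} leg, not a frozen one) decomposes as a sum of gluings
\[
\gl_*\big(\tB^m_{g_1,\ldots}\otimes \tB^{\,2}_{g_2,\ldots}\big)
\]
over splittings $g_1+g_2=g$ and partitions of the legs, where the second factor always has $m'=2$. These boundary terms do not cancel combinatorially; they vanish only because the factors are $\tB$-classes for strictly smaller $(g',n')$. So you need a double induction on $(g,n)$, and crucially the case of general $m$ feeds on the $m=2$ case for all smaller $(g',n')$. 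The paper therefore first runs the entire argument for $m=2$ (induction on $(g,n)$, using Step~2 to reduce to $\sum d_i=2g+1$, then Step~1 to impose $d_i\ge 1$), and only afterwards repeats it for $m\ge 3$, now allowed to invoke the already-proved $m=2$ relations in the boundary terms. Your plan treats the two steps as independent and calls Step~2 routine; without the inductive structure and the special role of $m=2$, the argument does not close.
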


\begin{proof} 
For a tree $T\in\SRT_{g,n,m}$ and $h\in\tH^{em}_+(T)$, let us use the more detailed notation $I_{h,T}$ instead of $I_h$.

We start with the following lemma. 

\begin{lemma}\label{lem:string} 
We have $\tB^m_{g,(d_1,\dots,d_{n},0)} = \pi^*\tB^m_{g,(d_1,\dots,d_{n})}$, where $\pi\colon \oM_{g,n+1+m}\to \oM_{g,n+m}$ forgets the marked point number $n+1$ \textup{(}and shifts the numbers of the last $m$ marked points\textup{)}. 
\end{lemma}

\begin{proof} 
Up to a relabeling of the marked points or legs in the stable rooted trees, it is convenient to assume that the marked point that we forget under the projection $\pi$ is labeled by $0$ and that the labels of all other $n+m$ points are preserved by $\pi$. With this new convention, we have to prove that $\tB^m_{g,(0,d_1,\dots,d_{n})} = \pi^*\tB^m_{g,(d_1,\dots,d_{n})}$ or equivalently
$$
P_{g,n+1,m}(0,x_1,\ldots,x_n)=\pi^*P_{g,n,m}(x_1,\ldots,x_n).
$$
	
We use the formula~\eqref{eq:formula for P} for the polynomial $P_{g,n,m}$ and consider the contribution of a pair $(T,p)$, $T\in\SRT_{g,n,m}$, $p\colon\tH_+^{em}(T)\to\mbZ_{\ge 0}$. From the pullback formula for the $\psi$-classes, we have
\begin{align}\label{eq:pullbackTree}
\pi^*\xi_{T*}\left(\prod_{h \in \tH_+^{em}(T)}\hspace{-0.1cm}\psi_h^{p(h)}\right) 
= \hspace{-0.1cm}\sum_{v\in V(T)}\hspace{-0.1cm} \xi_{T_v*}\left(\prod_{h \in \tH_+^{em}(T_v)} \hspace{-0.1cm}\psi_h^{p_v(h)}\right) 
- \hspace{-0.2cm}\sum_{f\in \tH_+^{em}(T)}\hspace{-0.1cm}\xi_{T_f*}\left(\prod_{h \in \tH_+^{em}(T_f)}\hspace{-0.1cm}\psi_h^{p_f(h)}\right).
\end{align}
Here $T_v$ is the tree $T$ with an extra leg $\sigma_0$ attached to the vertex $v$. Naturally, $\tH^{em}_+(T_v)=\tH^{em}_+(T)\sqcup\{\sigma_0\}$, and we define $p_v(\sigma_0):=0$, and $p_v(h):=p(h)$ for any $h\in \tH^{em}_+(T)$. The tree $T_f$ is obtained as follows (see also \eqref{eq:ExampleTreeEdge}):
\begin{itemize}
\item If $f$ is a leg $\sigma_i$, $1\le i\le n$, then we attach to it a new vertex of genus $0$ and attach to this new vertex the leg $\sigma_i$ and an extra leg $\sigma_0$.

\item If $f$ is a part of an edge, then we break this edge into two half-edges, insert a vertex of genus $0$ between them, and attach an extra leg $\sigma_0$ to this vertex.
\end{itemize}
We have a natural inclusion $\tH^{em}_+(T)\subset\tH^{em}_+(T_f)$ with $|\tH^{em}_+(T_f)\backslash \tH^{em}_+(T)|=2$. The two half-edges from $\tH^{em}_+(T_f)\backslash \tH^{em}_+(T)$ are attached to the new vertex of genus $0$: one of them is $\sigma_0$, and we denote the other one by $\tf$. By definition, the function $p_f$ coincides with $p$ on $\tH^{em}_+(T)\backslash\{f\}$, $p_f(f):=p(f)-1$, and $p_f(\sigma_0)=p_f(\tf):=0$.
\begin{align} \label{eq:ExampleTreeEdge}
\vcenter{\xymatrix@C=15pt@R=5pt{
& &   \\ 
*+[o][F-]{{h}}\ar@{-}[rru]^<<<<{\psi^{p(f)}} & &
}} 
\quad\leadsto\quad
\vcenter{\xymatrix@C=10pt@R=5pt{
		& & & & &  \\ 
& & & *+[o][F-]{{0}}\ar@{-}[rru]^<<<{\psi^{0}} \ar@{-}[rrd]*{{}_{\,\,\sigma_0}}_<<<{\psi^0} & & \\ 
*+[o][F-]{{h}}\ar@{-}[rrru]^<<<<<{\psi^{p(f)-1}} & & & & & 
}} 
\end{align}
This way, we see that taking the sum over all pairs $(T,p)$ contributing to $P_{g,n,m}(x_1,\ldots,x_n)$ and using~\eqref{eq:pullbackTree}, we can list all pairs contributing to $P_{g,n+1,m}(0,x_1,\ldots,x_n)$, and moreover the signs are exactly the ones we have to use in the formula for $P_{g,n+1,m}(0,x_1,\ldots,x_n)$.

So we just have to check the coefficients. In the case of the pairs $(T_v,p_v)$, the equality of the coefficients is obvious because $\left.x_{I_{h,T_v}}^{p_v(h)+1}\right|_{x_0=0}=x_{I_{h,T}}^{p(h)+1}$ for any $h\in \tH^{em}_+(T)=\tH^{em}_+(T_v)\backslash\{\sigma_0\}$. In the case of the pairs $(T_f,p_f)$, we have
$\left.x_{I_{h,T_f}}^{p_f(h)+1}\right|_{x_0=0}=x_{I_{h,T}}^{p(h)+1}$ for any $h\in \tH^{em}_+(T)\backslash\{f\}\subset \tH^{em}_+(T_f)$. Regarding the remaining half-edges, note that $x_{I_{\tf,T_f}}=\left.x_{I_{f,T_f}}\right|_{x_0=0}=x_{I_{f,T}}$, which implies that $x_{I_{f,T}}^{p(f)+1}=\left.x_{I_{f,T_f}}^{p_f(f)+1} x_{I_{\tf,T_f}}^{p_f(\tf)+1}\right|_{x_0=0}$. Thus, the coefficients match for this type of trees as well, and we conclude that $P_{g,n+1,m}(0,x_1,\ldots,x_n)=\pi^*P_{g,n,m}(x_1,\ldots,x_n)$. 
\end{proof}

\begin{lemma} \label{lem:reductionpsi} 
Assume that the relations~\eqref{eq:systemofrelations} hold for all triples $(g',n',m')$ with either $g'<g$, $n'\leq n$ or $g'\leq g$, $n'<n$, for $m'=m$ and $m'=2$. Then the difference 
\begin{align}\label{eq:difference}
\tB^m_{g,(d_1,\ldots,d_{i-1},d_i+1,d_{i+1},\ldots,d_n)} - \psi_i \tB^m_{g,(d_1,\ldots,d_n)}
\end{align}	
is equal to zero for any $d_1,\ldots,d_n\ge 0$ such that $d_1+\cdots+d_n\geq 2g+m-1$. 
\end{lemma}

\begin{proof}
Using the formula~\eqref{eq:formula1 for tB}, we see that the pairs $(T,p)$, $T\in\SRT_{g,n,m}$, $p\colon\tH^{em}_+(T)\to\mbZ_{\ge 0}$, contributing to the difference $\tB^m_{g,(d_1,\dots,d_i+1,\dots,d_n)}-\psi_i \tB^m_{g,(d_1,\dots,d_n)}$ satisfy the property $p(\sigma_i)=0$; these graphs come from the first summand. Obviously, $\sigma_i$ cannot be attached to the root vertex (otherwise, the coefficient of $x_i^{d_i+1}$ is equal to zero). So $\sigma_i$ is attached to the vertex that is the first descendant of some $f\in \tH_+^{em}(T)$. 

We cut the edge that contains $f$ and obtain two trees, $T_1$ and $T_2$, of genera $g_1$ and $g_2$, respectively. The regular legs of $T_1$ are $\sigma_j$, $j\not\in I_{f,T}$, and $f$, the root vertex is the original root vertex, and the frozen legs are the  frozen legs of~$T$. The root vertex of $T_2$ is the one where~$\sigma_i$ is attached, the regular legs are $\sigma_j$, $j\in I_{f,T} \backslash\{i\}$, and the frozen legs are $\sigma_i$ and the half-edge $f'$ that formed an edge with $f$ in $T$. So $T_1\in\SRT_{g_1,n-|I_{f,T}|+1,m}$ and $T_2\in\SRT_{g_2,|I_{f,T}|-1,2}$. We have natural inclusions $\tH^{em}_+(T_1)\subset\tH^{em}_+(T)$ and $\tH^{em}_+(T_2)\subset\tH^{em}_+(T)$, and we denote by $p^{(1)}$ and $p^{(2)}$, respectively, the restrictions of the function~$p$ to these subsets. 

\begin{example}
Consider, for instance, the following tree $T$, which is cut into $T_1$ and $T_2$ at the dashed place on the picture: 
\begin{gather*}
\vcenter{\xymatrix@C=12pt@R=5pt{
& & & & & & & & & & & & & &  \\
& & & & & & & & & & *+[o][F-]{{g_{2,1}}} \ar@{-}[lld]*{{}_{\,f'}}\ar@{-}[lu]*{{}_{\sigma_1}\,\,}	\ar@{-}[ru]*{{}_{\,\,\sigma_4}}^<<<{\psi^{p_4}}\ar@{-}[rrd]_<<<{\psi^{p(h_3)}} \\
& & & & & & & & & & & &  *+[o][F-]{{g_{2,2}}} \ar@{-}[rd]*{{}_{\,\,\sigma_5}}^<<<{\psi^{p_5}} & &\\
& & & & & & \ar@{.}[rru] & & & & & & & & & \\
& & & & *+[o][F-]{{g_{1,2}}}\ar@{-}[rru]*{{}_{\,f}}^<<<<{\psi^{p(f)}} \ar@{-}[rrd]_<<<{\psi^{p(h_2)}}& \\ 
& &*+[o][F-]{{g_{1,1}}}\ar@{-}[rru]^<<<{\psi^{p(h_1)}}\ar@{-}[rd]*{{}_{\,\,\sigma_3}}_<<<{\psi^{p_3}}\ar@{-}[ul]*{{}_{\sigma_6\,\,}}\ar@{-}[l]*{{}_{\sigma_7\,\,}}\ar@{-}[dl]*{{}_{\sigma_8\,\,}} & &  & & *+[o][F-]{{g_{1,3}}}\ar@{-}[rd]*{{}_{\,\,\sigma_2}}^<<<{\psi^{p_2}} \\
& & & & & & & & }}
\end{gather*}
Here we assume  $n=5$, $m=3$, $g=g_1+g_2$, where $g_1=g_{1,1}+g_{1,2}+g_{1,3}$ and $g_2=g_{2,1}+g_{2,2}$, $i=1$. The tree $T$ is cut into two trees at the edge $(f,f')$, $I_{f,T} = \{1,4,5\}$, and we set $\{h_1,h_2,h_3,f\}:=H^e_+(T)$.
\end{example}
Note that if $I_{f,T}\ne \{i\}$, then the number of regular legs both in $T_1$ and in $T_2$ is less than $n$. If $I_{f,T}=\{i\}$, then the genus of $T_1$ is less than $g$.

We can express the contribution of the pair $(T,p)$ to the difference~\eqref{eq:difference} as follows:
\begin{align}
\Coef_{x_1^{d_1}\cdots x_i^{d_i+1}\cdots x_n^{d_n}}\gl_*&\left[(-1)\hspace{-0.2cm}\prod_{i\in\br{n}\backslash I_{f,T}} \hspace{-0.25cm}x_i^{-1}\cdot(-1)^{|E(T_1)|} \xi_{T_1*}\left(\prod_{h \in \tH_+^{em}(T_1)}\hspace{-0.25cm}\psi_h^{p^{(1)}(h)}\right)\hspace{0cm} \prod_{h \in\tH_+^{em}(T_1)}\hspace{-0.25cm}x_{I_{h,T}}^{p^{(1)}(h)+1}\right.\label{eq:decomposition of T}\\
&\hspace{0.2cm}\left.\otimes\hspace{-0.25cm}\prod_{j\in I_{f,T}\backslash\{i\}}\hspace{-0.25cm}x_j^{-1}\cdot(-1)^{|E(T_2)|} \xi_{T_2*}\left(\prod_{h \in \tH_+^{em}(T_2)}\hspace{-0.25cm}\psi_h^{p^{(2)}(h)}\right)\hspace{0cm} \prod_{h \in\tH_+^{em}(T_2)}\hspace{-0.25cm}x_{I_{h,T}}^{p^{(2)}(h)+1}\right],\notag
\end{align}
where  $\gl\colon\oM_{g_1,(n-|I_{f,T}|+1)+m}\times \oM_{g_2,(|I_{f,T}|-1)+2}\to \oM_{g,n+m}$ is the natural gluing map that glues the marked point corresponding to the regular leg $f$ on the curves of the first space and the marked point corresponding to the frozen leg $f'$ on the curves of the second space into a node.

Summing the expressions~\eqref{eq:decomposition of T} over all pair $(T,p)$, $T\in\SRT_{g,n,m}$, $T\colon\tH^{em}_+(T)\to\mbZ_{\ge 0}$, such that $p(\sigma_i)=0$ and $\sigma_i$ is not attached to the root of $T$, we obtain that the difference~\eqref{eq:difference} is equal to
\begin{align*}
&-\Coef_{x_1^{d_1}\cdots x_i^{d_i+1}\cdots x_n^{d_n}}\sum_{\substack{g_1+g_2=g\\g_1,g_2\ge 0}}\sum_{\substack{I\sqcup J=\br{n}\\i\in J\ne\{i\}}}\gl_*\Big[\underbrace{x_J P_{g_1,|I|+1,m}(X_I,x_J)}_{A:=}\otimes \underbrace{P_{g_2,|J|-1,2}(X_{J\backslash\{i\}})}_{B:=}\Big]\\
&-\Coef_{x_1^{d_1}\cdots x_i^{d_i+1}\cdots x_n^{d_n}}\sum_{\substack{g_1+g_2=g\\g_1\ge 0,\,g_2\ge 1}}\gl_*\Big[\underbrace{x_i P_{g_1,n,m}(x_1,\ldots,\widehat{x_i},\ldots,x_n,x_i)}_{C:=}\otimes 1\Big],
\end{align*}
where by $X_I$ we denote the tuple of numbers $x_{i_1},\ldots,x_{i_{|I|}}$, $\{i_1,\ldots,i_{|I|}\}=I$. By the assumptions of the lemma, we have
$$
\deg A\le 2g_1+m-2+1, \quad \deg B\le 2g_2+2-2,\quad \deg C\le 2g_1+m-2+1\le 2g+m-1,
$$
which implies that the difference~\eqref{eq:difference} is equal to $0$ when $\sum d_i\ge 2g+m-1$.
\end{proof}
 
Now we can complete the proof of Theorem~\ref{theorem:reductionSmall}. With Lemma~\ref{lem:reductionpsi}, we can first prove the equivalence of~\eqref{eq:systemofrelations} and~\eqref{eq:smallsystemofrelations} for $m=2$ by induction on the pairs $(g,n)$ ignoring the condition $d_i\geq 1$ in~\eqref{eq:smallsystemofrelations}. The condition $d_i\geq 1$ is then restored by Lemma~\ref{lem:string}. Once it is done for $m=2$, we can do it for any $m\geq 2$, again, first using Lemma~\ref{lem:reductionpsi} and induction on the pairs $(g,n)$ ignoring the condition $d_i\geq 1$ in~\eqref{eq:smallsystemofrelations} and then applying Lemma~\ref{lem:string} to restore this condition. 
\end{proof}

\begin{remark} 
Theorem~\ref{theorem:reductionSmall} reduces the whole system of tautological relations from Conjecture~\ref{conjecture1} to a finite number of relations of fixed degree $2g+m-1$ for each $g$ and $m$. The total number of relations to check is equal to the number of partitions of $2g+m-1$. 

In particular, the crucial case for the application to the polynomiality of the Dubrovin--Zhang hierarchies for arbitrary F-CohFTs (see Section~\ref{subsec:poly}), the case $m=2$, is reduced to $|\{\lambda\vdash (2g+1) \}|$ relations for each $g\geq 0$. One of these relations is proved for any $g\geq 0$ in Section~\ref{subsection:VanishingOnePoint}.
\end{remark}

\renewcommand\thesection{\Alph{section}}
\setcounter{section}{0}

\section*{Appendix. Localization in the moduli space of stable relative maps}\label{appendix:localization}

\addcontentsline{toc}{section}{Appendix. Localization in the moduli space of stable relative maps}
\refstepcounter{section}
\setcounter{subsection}{0}

For convenience of a reader, we briefly review here the localization formula for the moduli space of stable relative maps to $(\mbP^1,\infty)$, following the papers~\cite{GV05} (which presents the formula in a much more general setting) and \cite{Liu11} (containing details in the case of stable relative maps to $(\mbP^1,\infty)$).

\subsection{Stable relative maps}

For $m\ge 1$, we denote by $\mbP^1(m)=\mbP^1_1\cup\ldots\cup\mbP^1_m$ a chain of $m$ copies of $\mbP^1$. For $l=1,\ldots,m-1$, let $q_l$ be the node at which $P^1_l$ and $\mbP^1_{l+1}$ intersect. Let $q_0\in\mbP^1_1$ and $q_m\in\mbP^1_m$ be smooth points. Identifying $q_0$ with $\infty\in\mbP^1=\mbP^1_0$, we obtain a chain of $m+1$ copies of $\mbP^1$ that we denote by~$\mbP^1[m]$. In the case $m=0$, we define $\mbP^1[0]:=\mbP^1=\mbP^1_0$ and $q_0:=\infty$. The component $\mbP^1=\mbP^1_0$ is called the \emph{root} component, and the components $\mbP^1_1,\ldots,\mbP^1_m$ are called the \emph{bubble} components.

Given $g\ge 0$, $d\ge 1$, $n\ge 0$, and a partition $\mu$ of $d$ of length $h=l(\mu)$, a \emph{stable relative map} to $(\mbP^1,\infty)$ is the following data:
\begin{gather}\label{eq:stable relative map}
(f\colon C\lra\mbP^1[m];x_1,\ldots,x_{h+n}),
\end{gather}
where $C$ is a connected complex algebraic curve of genus $g$, with at most nodal singularities,~$f$~is a morphism, and $x_1,\ldots,x_{h+n}\in C$ are smooth pairwise distinct marked points with the following properties:  
\begin{enumerate}
\item[a)] We have the degree condition over each $\mbP^1_i$, $0\le i\le m$. 

\item[b)] We have $f^{-1}(q_m)=\{x_1,\ldots,x_h\}$, and the map $f$ is ramified at $x_i$, $1\le i\le h$, with multiplicity~$\mu_i$.

\item[c)] We have the predeformability condition over each node of $\mbP^1[m]$. 

\item[d)] The automorphism group of~\eqref{eq:stable relative map} is finite, where in the target $\mbP^1[m]$, we allow automorphisms fixing all of the points from $\mbP^1_0\cup\{q_m\}$.
\end{enumerate}
The space of isomorphism classes of stable relative maps is denoted by $\oM_{g,n}(\mbP^1,\mu)$. This space is connected, and it is endowed with a virtual fundamental class
$$
[\oM_{g,n}(\mbP^1,\mu)]^\vir\in H_{2\cdot\vdim}(\oM_{g,n}(\mbP^1,\mu),\mbC),\quad \vdim=2g-2+d+h.
$$

Given $g\ge 0$, $d\ge 1$, $n\ge 0$, and partitions $\nu,\mu$ of $d$ of lengths $k=l(\nu)$ and $h=l(\mu)$, a \emph{stable relative map to rubber $\mbP^1$} is the following data:
\begin{gather}\label{eq:stable relative map to rubber}
(f\colon C\lra\mbP^1(m);x_1,\ldots,x_{h+n+k}),
\end{gather}
where $C$ is a connected complex algebraic curves of genus $g$ with at most nodal singularities,~$f$~is a morphism, and $x_1,\ldots,x_{h+n+k}\in C$ are smooth pairwise distinct marked points with the following properties:  
\begin{enumerate}
\item[a)] We have the degree condition over each $\mbP^1_i$, $1\le i\le m$. 

\item[b)] We have $f^{-1}(q_0)=\{x_{h+n+1},\ldots,x_{h+n+k}\}$, and the map $f$ is ramified at $x_{h+n+i}$, $1\le i\le k$, with multiplicity~$\nu_i$. 

\item[c)] We have $f^{-1}(q_m)=\{x_1,\ldots,x_h\}$, and the map $f$ is ramified at $x_i$, $1\le i\le h$, with multiplicity~$\mu_i$. 

\item[d)] We have the predeformability condition over each node of $\mbP^1(m)$. 

\item[e)] The automorphism group of~\eqref{eq:stable relative map to rubber} is finite, where in the target $\mbP^1(m)$, we allow automorphisms fixing the points $q_0$ and $q_m$.
\end{enumerate}
The space of isomorphism classes of stable relative maps to rubber $\mbP^1$ is denoted by $\oM^\sim_{g,n}(\mbP^1,\nu,\mu)$. This space is connected, and it is endowed with a virtual fundamental class
$$
[\oM^\sim_{g,n}(\mbP^1,\nu,\mu)]^\vir\in H_{2\cdot\vdim}(\oM^\sim_{g,n}(\mbP^1,\nu,\mu),\mbC),\quad \vdim=2g-3+k+h.
$$
We define $\oM^\sim_{g,n}(\mbP^1,\nu,\mu):=\emptyset$ if $|\mu|\ne|\nu|$ or if $|\mu|=|\nu|=0$.

One can also consider stable relative maps to rubber~$\mbP^1$ where the source curve is not necessarily connected. The moduli space of such maps will be denoted by $\oM^{\sim,\bullet}_{g,n}(\mbP^1,\nu,\mu)$. Note that the genus $g$ can be negative here. This space is not necessarily connected. The connected components can be described as follows. For $r\ge 1$, consider a decomposition
$$
\br{h+n+k}=\bigsqcup_{i=1}^r A_i,\quad g=\sum_{i=1}^r g_i+1-r,\quad g_i\ge 0.
$$
We denote by $A$ the set of pairs
$$
A:=\{(g_1,A_1),(g_2,A_2),\ldots,(g_r,A_r)\}.
$$
Denote by 
$$
\oM_{g,n}^{\sim,\bullet}(\mbP^1,\nu,\mu)_A
$$
the subspace of $\oM_{g,n}^{\sim,\bullet}(\mbP^1,\nu,\mu)$ formed by stable relative maps to rubber $\mbP^1$, where the source curve has $r$ connected components, and for each $1\le j\le r$, all of the points from $\{x_{i}\}_{i\in A_j}$ belong to one connected component of genus $g_j$. If $\oM_{g,n}^{\sim,\bullet}(\mbP^1,\nu,\mu)_A\ne\emptyset$, then it is a connected component of $\oM^{\sim,\bullet}_{g,n}(\mbP^1,\nu,\mu)$.

Assigning to a stable relative map to rubber $\mbP^1$ the cotangent space at the point $q_0\in\mbP(m)$ gives a line bundle over $\oM_{g,n}^{\sim,\bullet}(\mbP^1,\nu,\mu)$ whose first Chern class is denoted by $\tpsi_0\in H^2(\oM_{g,n}^{\sim,\bullet}(\mbP^1,\nu,\mu))$.  

\subsection{$\boldsymbol{\mbC^*}$-fixed points}

Consider the $\mbC^*$-action on $\mbP^1$ given by
$$
t\cdot [x,y]:=[tx,y],\quad [x,y]\in\mbP^1,\; t\in\mbC^*,
$$
and the induced $\mbC^*$-action on $\oM_{g,n}(\mbP^1,\mu)$. For the localization formula, we will need a description of the connected components of the $\mbC^*$-fixed point set $\oM_{g,n}(\mbP^1,\mu)^{\mbC^*}$. 

We assume $2g-2+h+n>0$.

Consider a stable relative map $(f\colon C\to\mbP^1[m];x_1,\ldots,x_{h+n})$ from $\oM_{g,n}(\mbP^1,\mu)^{\mbC^*}$. Then we have the following: 
\begin{itemize}
\item $f^{-1}(\mbP^1_0\backslash\{0,\infty\})$ is a disjoint union of twice-punctured spheres $S_1,\ldots,S_k$, $k\ge 1$, and $f|_{S_i}\colon S_i\to\mbP^1_0\backslash\{0,\infty\}$ is an honest covering map, whose degree we denote by $d_i$.

\item $f^{-1}(0)$ is a disjoint union of connected nodal curves $C^{(0)}_1,\ldots,C^{(0)}_p$ and of some number of points.
\end{itemize}
Regarding the behavior of $f$ over $\infty$, there are two cases.

\emph{Case $1$: $m=0$}.~ Then we have $f^{-1}(\infty)=\{x_1,\ldots,x_h\}$, $p=1$, $f^{-1}(0)=C^{(0)}_1$, $k=h$, and after a renumbering of the spheres $\oS_1,\ldots,\oS_h$, we have $d_i=\mu_i$. Denote the space of such maps by~$\oF_0\subset\oM_{g,n}(\mbP^1,\mu)^{\mbC^*}$; it is connected. Note that $x_{h+1},\ldots,x_{h+n}\in C^{(0)}_1$, and therefore the curve~$C^{(0)}_1$ equipped with the marked points $C^{(0)}_1\cap\oS_i$, $1\le i\le h$, and $x_{h+1},\ldots,x_{h+n}$ is a stable curve from $\oM_{g,h+n}$. Conversely, given a stable curve from $\oM_{g,h+n}$, let us attach $h$ copies of $\mbP^1$ at the first $h$ marked points and construct a map from the resulting curve to $\mbP^1$ by sending the original stable curve to $0$ and mapping the $\supth{i}$ copy of $\mbP^1$ to the target $\mbP^1$ by $[x,y]\mapsto [x^{\mu_i},y^{\mu_i}]$. This gives a surjective map 
$$
\iota_0\colon\oM_{g,h+n}\lra\oF_0,
$$
for which we have 
$$
\iota_{0*}[\oM_{g,h+n}]=\prod_{i=1}^h\mu_i\cdot[\oF_0]^\vir.
$$

\emph{Case $2$: $m\ge 1$}.~ Then $f^{-1}(\mbP^1(m))$ is a disjoint union of connected nodal curves $C^{(\infty)}_1,\ldots,C^{(\infty)}_r$. Let us assign to our stable relative map a \emph{decorated bipartite graph} $\Gamma$ as follows: 
\begin{itemize}
\item The vertices $v\in V(\Gamma)$ are labeled by $0$ or $\infty$, which gives a decomposition $V(\Gamma)=V^{0}(\Gamma)\sqcup V^{\infty}(\Gamma)$. The vertices from $V^{0}(\Gamma)$ correspond to the connected components of~$f^{-1}(0)$, and the vertices from $V^{\infty}(\Gamma)$ correspond to the curves $C^{(\infty)}_i$, $1\le i\le r$. 

\item A vertex $v\in V^0(\Gamma)$ is called \emph{unstable} if it corresponds to a point in~$f^{-1}(0)$. All other vertices from $V^0(\Gamma)$ are called \emph{stable}. The sets of stable and unstable vertices are denoted by $V^0_\st(\Gamma)$ and $V^0_\unst(\Gamma)$, respectively; $V^0(\Gamma)=V^0_\st(\Gamma)\sqcup V^0_\unst(\Gamma)$.

\item Each vertex $v\in V(\Gamma)$ is decorated with a number $g(v)\in\mbZ_{\ge 0}$ that is equal to
\begin{itemize}
\item the genus of the corresponding curve if $v$ corresponds to a curve,
\item $0$ if $v$ corresponds to a point.
\end{itemize}

\item The edges $e\in E(\Gamma)$ correspond to the spheres $\oS_1,\ldots,\oS_k$. The edge $e\in E(\Gamma)$ corresponding to $\oS_i$ is decorated with $d_e:=d_i$. By definition, we assign the same number to both half-edges $h_1$, $h_2$ forming the edge $e$; that is, $d_{h_1}=d_{h_2}:=d_e$. 

\item The graph $\Gamma$ carries $h+n$ legs $L(\Gamma)$ that correspond to the marked points on $C$.

\item We say that an unstable vertex $v\in V^0_\unst(\Gamma)$ is of 
\begin{itemize}
\item \emph{first type} if $n(v)=1$ (the set of such vertices is denoted by $V_\unst^{0,1}(\Gamma)$),
\item \emph{second type} if $n(v)=2$ and $|L[v]|=1$ (the set of such vertices is denoted by~$V_\unst^{0,2}(\Gamma)$),
\item \emph{third type} if $n(v)=2$ and $L[v]=\emptyset$ (the set of such vertices is denoted by $V_\unst^{0,3}(\Gamma)$).
\end{itemize}
If $v$ is an unstable vertex of first type or of second type, then it is incident to exactly one edge $e\in E(\Gamma)$. Set $d_v:=d_e$. If $v$ is an unstable vertex of third type, then it is incident to exactly two edges $e,\te\in E(\Gamma)$. Set $d_v:=d_e$ and $\td_v:=d_{\te}$.

\item The graph $\Gamma$ is connected.
\end{itemize}

Denote by $\oF_\Gamma$ the subspace of $\oM_{g,n}(\mbP^1,\mu)^{\mbC^*}$ formed by stable relative maps with a given decorated bipartite graph $\Gamma$. If $\oF_\Gamma\ne\emptyset$, then it is a connected component of $\oM_{g,n}(\mbP^1,\mu)^{\mbC^*}$. Introduce the following notation:
$$
g_\infty(\Gamma):=\sum_{v\in V^\infty(\Gamma)}(g(v)-1)+1,\quad n_\infty(\Gamma):=\sum_{v\in V^\infty(\Gamma)}|L[v]|-h.
$$
Consider the set of pairs 
$$
A(\Gamma):=\{(g(v),H[v])\}_{v\in V^\infty(\Gamma)},
$$
and denote by $\nu(\Gamma)$ the partition of $d=|\mu|$ given by the numbers $d_e$, $e\in E(\Gamma)$. Set
$$
\oM_\Gamma:=\prod_{v\in V^0_\st(\Gamma)}\oM_{g(v),n(v)}\times\oM_{g_{\infty}(\Gamma),n_\infty(\Gamma)}^{\sim,\bullet}(\mbP^1,\nu(\Gamma),\mu)_{A(\Gamma)}.
$$
We have
$$
[\oM_\Gamma]^\vir=\prod_{v\in V^0_\st(\Gamma)}[\oM_{g(v),n(v)}]\times\left[\oM_{g_{\infty}(\Gamma),n_\infty(\Gamma)}^{\sim,\bullet}(\mbP^1,\nu(\Gamma),\mu)_{A(\Gamma)}\right]^\vir,
$$
and for the natural surjective morphism
$$
\iota_\Gamma\colon\oM_{\Gamma}\lra\oF_\Gamma, 
$$
we have
\begin{gather}\label{eq:covering of connected components}
\iota_{\Gamma*}[\oM_{\Gamma}]^\vir=|\Aut(\Gamma)|\prod_{e\in E(\Gamma)}d_e\cdot[\oF_\Gamma]^\vir.
\end{gather}

\subsection{Localization formula}

For a topological space $X$ with a $\mbC^*$-action, we denote by $H_*^{\mbC^*}(X)$ and $H^*_{\mbC^*}(X)$ the $\mbC^*$-equivariant homology and cohomology groups of~$X$ with  coefficients in~$\mbC$, respectively. We denote by~$u$ the generator of the equivariant cohomology ring of a point: $H^*_{\mbC^*}(\pt)=\mbC[u]$. The equivariant Euler class of a $\mbC^*$-equivariant complex vector bundle $V\to X$ is denoted by $e_{\mbC^*}(V)$. 

We now consider the $\mbC^*$-action on $\oM_{g,n}(\mbP^1,\mu)$ given in the previous section. The moduli space $\oM_{g,n}(\mbP^1,\mu)$ is endowed with a $\mbC^*$-equivariant virtual fundamental class, which abusing notation we denote by $[\oM_{g,n}(\mbP^1,\mu)]^\vir\in H^{\mbC^*}_{2\cdot\vdim}(\oM_{g,n}(\mbP^1,\mu))$. The virtual localization formula for the moduli space $\oM_{g,n}(\mbP^1,\mu)$ proved in~\cite{GV05} gives the following formula for this class considered as an element of $H^{\mbC^*}_*(\oM_{g,n}(\mbP^1,\mu))\otimes_{\mbC[u]}\mbC[u,u^{-1}]$: 
\begin{gather}\label{eq:general localization formula}
[\oM_{g,n}(\mbP^1,\mu)]^\vir=\frac{[\oF_0]^\vir}{e_{\mbC^*}(N_0^\vir)}+\sum_{\substack{\text{decorated}\\\text{graphs $\Gamma$}}}\frac{[\oF_\Gamma]^\vir}{e_{\mbC^*}(N_\Gamma^\vir)}\in H^{\mbC^*}_*(\oM_{g,n}(\mbP^1,\mu))\otimes_{\mbC[u]}\mbC[u,u^{-1}],
\end{gather}
where $N_0^\vir$ and $N_\Gamma^\vir$ are the virtual normal bundles to the subspaces $\oF_0$ and $\oF_\Gamma$, respectively. The following formulas are very useful for applications of the localization formula:
\begin{align}
&\frac{1}{\iota_0^*e_{\mbC^*}(N_0^\vir)}=\Lambda_g^{\vee}(u)u^{-1}\prod_{i=1}^h\left(\frac{\mu_i^{\mu_i+1}}{\mu_i!}\frac{u^{1-\mu_i}}{u-\mu_i\psi_i}\right),\label{eq:main localization formula 1}\\
\frac{1}{\iota_\Gamma^*e_{\mbC^*}(N_\Gamma^\vir)}=&\;\frac{1}{-u-\tpsi_0}\prod_{v\in V^0_\st(\Gamma)}\left(\Lambda_{g(v)}^{\vee}(u)u^{-1}\prod_{h\in H[v]\backslash L[v]}\frac{d_h^{d_h+1}}{d_h!}\frac{u^{1-d_h}}{u-d_h\psi_h}\right) \label{eq:main localization formula 2}\\
&\hphantom{\frac{1}{-u}}\times\prod_{v\in V^{0,1}_{\unst}(\Gamma)}\left(\frac{d_v^{d_v-1}}{d_v!}u^{1-d_v}\right)\prod_{v\in V^{0,2}_{\unst}(\Gamma)}\left(\frac{d_v^{d_v}}{d_v!}u^{-d_v}\right)\prod_{v\in V^{0,3}_{\unst}(\Gamma)}\left(\frac{d_v^{d_v+1}\td_v^{\td_v+1}}{d_v!\td_v!}\frac{u^{-d_v-\td_v}}{d_v+\td_v}\right),\notag
\end{align}
where 
$$
\Lambda_g^\vee(u):=\sum_{i=0}^g(-1)^i\lambda_i u^{g-i}.
$$



\begin{thebibliography}{BDGR20+++}

\bibitem[ABLR21]{ABLR21} A.~Arsie, A.~Buryak, P.~Lorenzoni, and P.~Rossi, \emph{Flat F-manifolds, F-CohFTs, and integrable hierarchies}, Comm.\ Math.\ Phys.\  {\bf 388} (2021), no.~1, 291--328. 

\bibitem[Bur15]{Bur15} A.~Buryak, \emph{Double ramification cycles and integrable hierarchies}, Comm.\ Math.\ Phys.~{\bf 336} (2015), no.~3, 1085--1107.

\bibitem[BDGR18]{BDGR18} A.~Buryak, B.~Dubrovin, J.~Gu\'er\'e, and P.~Rossi, \emph{Tau-structure for the double ramification hierarchies}, Comm.\ Math.\ Phys.~{\bf 363} (2018), no.~1, 191--260.

\bibitem[BDGR20]{BDGR20} \bysame, \emph{Integrable systems of double ramification type}, Int.\ Math.\ Res.\ Not.\ IMRN (2020), no.~24, 10381--10446. 

\bibitem[BGR19]{BGR19} A.~Buryak, J.~Gu\'er\'e, and P.~Rossi, \emph{DR/DZ equivalence conjecture and tautological relations}, Geom.\ Topol.~{\bf 23} (2019), no.~7, 3537--3600. 

\bibitem[BHIS22]{BHIS21} A.~Buryak, F.~Hern\'andez Iglesias, and S.~Shadrin, \emph{A conjectural formula for $\DR_g(a,-a)\lambda_g$}, \'Epijournal G\'eom.\ Alg\'ebrique~{\bf 6} (2022), Art.~8.
  
\bibitem[BPS12]{BPS12} A.~Buryak, H.~Posthuma, and  S.~Shadrin, \emph{A polynomial bracket for the Dubrovin-Zhang hierarchies}, J.~Differential Geom.~{\bf 92} (2012), no.~1, 153--185. 

\bibitem[BR21]{BR21} A.~Buryak and P.~Rossi, \emph{Extended $r$-spin theory in all genera and the discrete KdV hierarchy}, Adv.\ Math.~{\bf 386} (2021), Paper No.~107794.

\bibitem[BRS21]{BRS21} A.~Buryak, P.~Rossi, and S.~Shadrin, \emph{Towards a bihamiltonian structure for the double ramification hierarchy}, Lett.\ Math.\ Phys.~{\bf 111} (2021), no.~1, Paper No.~13. 

\bibitem[BSSZ15]{BSSZ15}  A.~Buryak, S.~Shadrin, L.~Spitz, and D.~Zvonkine, \emph{Integrals of $\psi$-classes over double ramification cycles}, Amer.\ J.~Math.~{\bf 137} (2015), no.~3, 699--737.

\bibitem[DZ01]{DZ01} B.~Dubrovin and Y.~Zhang, \emph{Normal forms of hierarchies of integrable PDEs, Frobenius manifolds and Gromov - Witten invariants}, preprint \arXiv{math/0108160} (2001).

\bibitem[GV05]{GV05} T.~Graber and R.~Vakil, \emph{Relative virtual localization and vanishing of tautological classes on moduli spaces of curves}, Duke Math.~J.~{\bf 130} (2005), no.~1, 1--37.

\bibitem[Gub22]{Gub22} D.~Gubarevich, \emph{A conjectural formula for $\lambda_g\DR_g(a,-a)$ is true in Gorenstein quotient}, preprint \arXiv{2204.05396} (2022).

\bibitem[JPPZ17]{JPPZ17} F.~Janda, R.~Pandharipande, A.~Pixton, and D.~Zvonkine, \emph{Double ramification cycles on the moduli spaces of curves}, Publ.\ Math.\ Inst.\ Hautes \'Etudes Sci.~{\bf 125} (2017), 221-–266.

\bibitem[Kon92]{Kon92} M.~Kontsevich, \emph{Intersection theory on the moduli space of curves and the matrix Airy function}, Comm.\ Math.\ Phys.~{\bf 147} (1992), no.~1, 1--23. 

\bibitem[KM94]{KM94} M.~Kontsevich and Yu.~Manin, \emph{Gromov-Witten classes, quantum cohomology, and enumerative geometry}, Comm.\ Math.\ Phys.~{\bf 164} (1994), no.~3, 525--562.

\bibitem[Liu11]{Liu11} C.-C.\,M.~Liu, \emph{Lectures on the ELSV formula}, in: \emph{Transformation groups and moduli spaces of curves}, pp.~195--216, Adv.\ Lect.\ Math.\ (ALM), vol.~16, Int.\ Press, Somerville, MA, 2011.

\bibitem[LRZ15]{LRZ15} S.-Q.~Liu, Y.~Ruan, and Y.~Zhang, \emph{BCFG Drinfeld--Sokolov hierarchies and FJRW-Theory}, Inventiones Mathematicae {\bf 201} (2015), no.~2, 711--772.
  
\bibitem[LP11]{LP11} X.~Liu and R.~Pandharipande, \emph{New topological recursion relations}, J.~Algebraic Geom.~{\bf 20} (2011), no.~3, 479--494.

\bibitem[Man99]{Man99} Yu.\,I.~Manin, \emph{Frobenius manifolds, quantum cohomology, and moduli spaces}, Amer.\ Math.\ Soc.\ Colloq.\ Publ., vol.~47, Amer.\ Math.\ Soc., Providence, RI, 1999.

\bibitem[PPZ15]{PPZ15} R.~Pandharipande, A.~Pixton, and D.~Zvonkine, \emph{Relations on $\oM_{g,n}$ via $3$-spin structures}, J.~Amer.\ Math.\ Soc.~{\bf 28} (2015), no.~1, 279--309.

\bibitem[Wit91]{Wit91} E.~Witten, \emph{Two-dimensional gravity and intersection theory on moduli space}, in: \emph{Surveys in differential geometry} (Cambridge, MA, 1990), pp.~243--310, Lehigh Univ., Bethlehem, PA, 1991.

\end{thebibliography}
\end{document}